\author[]{Mohammad  Farajzadeh Tehrani\\ Part 2 joint with Aleksey Zinger}
\address{Simons Center for Geometry and Physics}
\email{tehrani@scgp.stonybrook.edu}
\title[]{Counting genus zero real curves\\ in symplectic manifolds}
\numberwithin{equation}{section}
\newtheorem{theorem}{Theorem}[section]
\newtheorem{lemma}[theorem]{Lemma}
\newtheorem{corollary}[theorem]{Corollary}
\newtheorem{proposition}[theorem]{Proposition}
\theoremstyle{definition}
\newtheorem{definition}[theorem]{Definition}
\newtheorem{remark}[theorem]{Remark}
\newtheorem{example}[theorem]{Example}
\def\b{\mathbf b}
\def\A{\mathbf A}
\def\C{\mathbb C}
\def\H{\mathbb H}
\def\I{\mathbb I}
\def\P{\mathbb P}
\def\Q{\mathbb Q}
\def\R{\mathbb R}
\def\T{\mathbb T}
\def\Z{\mathbb Z}
\def\z{\mathbf z}
\def\fL{\mathfrak L}
\def\mc{\mathcal}
\def\cL{\mc{L}}
\def\cN{\mc{N}}
\def\De{\Delta}
\def\La{\Lambda}
\def\Si{\Sigma}
\def\Th{\Theta}
\def\de{\delta}
\def\ep{\epsilon}
\def\la{\lambda}
\def\om{\omega}
\def\si{\sigma}
\def\th{\theta}
\def\vt{\vartheta}
\def\ze{\zeta}
\def\fa{\mathfrak a}
\def\fc{\mathfrak c}
\def\fI{\mathfrak i}
\def\lra{\longrightarrow}
\def\ra{\rightarrow}
\def\ov#1{\overline{#1}}
\def\tn#1{\textnormal{#1}}
\def\mf#1{\mathfrak{#1}}
\def\wt#1{\widetilde{#1}}
\def\wh#1{\widehat{#1}}
\def\flr#1{\lfloor{#1}\rfloor}
\def\lr#1{\langle{#1}\rangle}
\def\Eq#1{\begin{equation}\label{#1}}
\def\EEq{\end{equation}}
\def\sEq#1{\begin{equation*}\label{#1}}
\def\sEEq{\end{equation*}}
\def\Thm#1{\begin{theorem}\label{#1}}
\def\EThm{\end{theorem}}
\def\Lem#1{\begin{lemma}\label{#1}}
\def\ELem{\end{lemma}}
\def\Prop#1{\begin{proposition}\label{#1}}
\def\EProp{\end{proposition}}
\def\Fix{\tn{Fix}}
\def\ev{\tn{ev}}
\def\id{\tn{id}}
\def\vir{\tn{vir}}
\def\top{\tn{top}}
\def\bot{\tn{bot}}
\def\down{\tn{bot}}
\def\disk{\tn{disk}}
\def\FS{\tn{FS}}
\def\dec{\tn{dec}}
\def\Def{\tn{Def}}
\def\Aut{\tn{Aut}}
\def\half{\tn{half}}
\def\val{\tn{val}}
\def\mov{\tn{mov}}
\def\Res{\tn{Res}}
\def\nd{\tn{d}}
\def\mfi{\mf{i}}
\def\bu{\bullet}
\def\ne{\tn{e}}
\begin{document}
\date{\small Dec~4, 2012. Updated: \today}
\begin{abstract}
There are two types of $J$-holomorphic spheres in a symplectic manifold invariant under an anti-symplectic involution:
those that have a fixed point locus and those that do not. The former are described by moduli spaces of $J$-holomorphic disks, which are well studied 
in the literature. 
In this paper, we first study moduli spaces describing the latter and then combine the two types of moduli spaces to get a well-defined theory of counting real curves of genus 0. We use equivariant localization to show that these invariants (unlike the disk invariants) are essentially the same 
for the two (standard) involutions on $\P^{4n-1}$. 
\end{abstract}
\maketitle
\tableofcontents

\section{Introduction and main results\label{ch:introduction}}

Let $(X,\om,\phi)$ be a symplectic manifold, which we will assume to be connected throughout this paper, 
with a real structure $\phi$, i.e a diffeomorphism $\phi\colon X\to X$ such that $\phi^2=\id_X$ and  
$\phi^*\om=-\om$. Let $L=\Fix(\phi) \subset X$ be the fixed point locus of $\phi$; $L$ is a Lagrangian submanifold of $(X,\om)$ which can be empty. 
In the simplest case of $(X,\omega)=(\P^1,\om_{\FS})$, where $\om_{\FS}$ is the Fubini-Study symplectic form, there are involutions of both types. An almost complex structure $J$ on $TX$ is called \textsf{$(\om,\phi)$-compatible} if $\phi^*J=-J$ and $\om(\cdot,J\cdot)$ is a metric. Denote the set of such almost complex structures by $\mc{J}_{\om,\phi}$ or simply $\mc{J}_{\phi}$.

Fix a compatible almost complex structure $J$. Let $u\colon \P^1 \to X$ be an $n$-marked somewhere injective $J$-holomorphic sphere, i.e.
\begin{equation}\label{swi}
du+J\circ du\circ j=0,\qquad u^{-1}(u(z))=\{z\} \quad \textnormal{for almost every }z\in \P^1,
\end{equation}
where $j$ is the complex structure of $\P^1$. We call such a $J$-holomorphic map \textsf{real} if its image (as a marked curve) is invariant under the action of $\phi$. In this case, pulling back $\phi$ to $\P^1$, we get an involution on $\P^1$, which may or may not have fixed points and preserves the set of marked points. After a change of coordinates, an anti-symplectic involution with fixed points can be written as 
$$\tau\colon \P^1 \to \P^1, \qquad \tau([z,w])=[\bar{w},\bar{z}],$$
while a fixed point free involution  can be written as 
\begin{equation}\label{equ:etainv}
\eta\colon \P^1 \to \P^1, \qquad \eta([z,w])=[\bar{w},-\bar{z}].
\end{equation}

For $k,l\in \Z^{\geq 0}$ and $A\in H_2(X)$, we define $\mc{M}_{k,l}(X,A)^{\phi,\tau}$ and $\mc{M}_{l}(X,A)^{\phi,\eta}$ to be the moduli spaces of degree $A$ genus zero $J$-holomorphic curves $u\colon \P^1 \to X$ satisfying
\begin{equation}\label{equ:taumaps}
u=\phi\circ u\circ \tau\qquad \hbox{and}\qquad  u=\phi\circ u\circ \eta,
\end{equation}
respectively, with $l$ disjoint ordered conjugate pairs of marked points, along with $k$ real ($\tau$-fixed) marked points in the first case.
Similar to \cite[Appendix C]{MS2}, these moduli spaces 
have real virtual dimension
\begin{equation}\label{dimension}
\begin{split}
\dim^{\vir}\mc{M}_{k,l}(X,A)^{\phi,\tau}&= \dim_{\C}X+c_1(A)+2l+k-3,\\
\dim^{\vir}\mc{M}_l(X,A)^{\phi,\eta}   & = \dim_{\C}X+c_1(A)+2l-3.
\end{split}
\end{equation}
Every $J$-holomorphic map $u\colon \P^1 \to X$ in $\mc{M}_{k,l}(X,A)^{\phi,\tau}$ corresponds to two $J$ holomorphic disks $u\colon (D^2,S^1) \to (X,L)$ with $k$ boundary marked points and $l$ $(\pm)$-decorated\footnote{Decorated moduli spaces are studied in \cite{G2}.} interior marked points, representing $\beta, -\phi_*\beta \in H_2(X,L)$; the $j$-th decoration is $(+)$ if the first point of the conjugate pair $(z_j,\tau(z_j))$ lies on the chosen disk and is $(-)$ otherwise. We define $\mc{M}_{k,l}^{\disk}(X,L,\beta)_{\dec}$ and $\mc{M}_{k,l}^{\disk}(X,L,\beta)$ to be the moduli space of such $J$-holomorphic disks with and without decorations, respectively.  
Let $\ov{\mc{M}}_{l}(X,A)^{\phi,\eta}$ and $\ov{\mc{M}}_{k,l}(X,A)^{\phi,\tau}$ be the stable map compactifications of $\mc{M}_{l}(X,A)^{\phi,\eta}$ and $\mc{M}_{k,l}(X,A)^{\phi,\tau}$, respectively. Let 
\begin{equation}\label{evaluation-maps}
\begin{aligned}
& \ev_i    \colon \ov{\mc{M}}_{l}(X,A)^{\phi,\eta}   \to X,  &\ev_i([u,\Si,(z_j,\eta(z_j))_{j=1}^{l}])= u(z_i), \\
& \ev_i^{B}\colon \ov{\mc{M}}_{k,l}(X,A)^{\phi,\tau} \to L,  &\ev_i^{B}([u,\Si,(w_j)_{j=1}^k,(z_j,\tau(z_j))_{j=1}^{l}])= u(w_i),\\
& \ev_i    \colon \ov{\mc{M}}_{k,l}(X,A)^{\phi,\tau} \to X,  &\ev_i([u,\Si,(w_j)_{j=1}^k,(z_j,\tau(z_j))_{j=1}^{l}])= u(z_i),
\end{aligned}
\end{equation}
be the natural evaluation maps.

For the classic moduli space $\ov{\mc{M}}_{n}(X,A)$ of $J$-holomorphic spheres in a homology class $A$, genus $0$  \textsf{Gromov-Witten invariants} are defined via integrals of the form
\begin{equation}\label{GW-invariants}
\left\langle \theta_1,\cdots,\theta_n\right\rangle_{A}= \int_{[\ov{\mc{M}}_{n}(X,A)]^{\vir}} \ev_1^*(\theta_1)\wedge \cdots \wedge \ev_n^*(\theta_n),
\end{equation}
where $\theta_i$'s are cohomology classes on $X$; see \cite{FO,LT,RT}. 
These integrals make sense and are independent of $J$, because $\ov{\mc{M}}_{n}(X,A)$ has a (virtually) orientable fundamental cycle without real codimension one boundary. One would like to define similar invariants for the moduli spaces $\ov{\mc{M}}_{k,l}^{\disk}(X,L,\beta)$ and the evaluation maps in (\ref{evaluation-maps}). 
The existence of such invariants is predicted by physicists \cite{F1,F2,F3,F4}), but there are obstacles to defining such invariants mathematically. In addition to the transversality issues (which are also present in the classical case), issues concerning orientability and codimension one boundary arise.

\subsection{Disk or $\tau$-invariants}\label{sec:open}

Whereas moduli spaces of closed curves have a canonical orientation induced by $J$, $\mc{M}_{k,l}^{\disk}(X,L,\beta)$ is not necessarily orientable. Moreover, if it is orientable, there is no canonical orientation. If $L$ has a spin (or relative spin) structure, then $\mc{M}_{k,l}^{\disk}(X,L,\beta)$ is orientable and a choice of spin structure canonically  determines an orientation on $\mc{M}_{k,l}^{\disk}(X,L,\beta)$; see \cite[Theorem 8.1.1]{FOOO}.

Let $\iota:H_2(X)\to H_2(X,L)$ be the inclusion homomorphism. 
The union of moduli spaces $\ov{\mc{M}}_{k,l}^{\disk}(X,L,\beta)_{\dec}$ over all $\beta\in H_2(X,L)$ such that $\iota(A)=\beta-\phi_*\beta$ is an \'etale\footnote{\'Etale double covering means that over the main stratum $\mc{M}_{k,l}^{\disk}(X,L,\beta)$, it is double covering; however, over the boundary strata it has higher degrees. } double covering of $\ov{\mc{M}}_{k,l}(X,A)^{\phi,\tau}$, with the deck transformation 
\begin{equation}\label{equ:tauM}
\tau_{\mc{M}}: [u,(w_j)_{j=1}^k,(z_j,\ep_j)_{j=1}^{l}] \to [\phi \circ u \circ c,(w_j)_{j=k}^1,(c(z_j),-\ep_j)_{j=1}^{l}],
\end{equation}
where $\ep_j=\pm$ is the decoration and $c(z)=\bar{z}$; see \cite[Theorem 1.1.(3)]{G2}, if $l>0$, and \cite[Section 1.3.4]{PSW}, if $l=0$, for a more detailed description of this covering map. At several points in the paper, we go back and forth between the two descriptions to relate the known results for $J$-holomorphic disks with the corresponding statements for the $(\phi,\tau)$-real maps. 

For every $\beta\in H_2(X,L)$, $k,l\geq 0$, a choice of spin structure on $L=\Fix(\phi)$ 
determines an orientation on $\mc{M}_{k,l}^{\disk}(X,L,\beta)_{\dec}$,
with the anti-complex orientation imposed on the tangent spaces at the $(-)$ marked points, 
as in \cite[Section 4]{G2}. 
By \cite[Theorem 1.3]{FO-3} and \cite[Corollary 5.4]{G}, 
$\tau_{\mc{M}}$ is orientation-preserving if and only if 
$\frac{\mu(\beta)}{2}+k$ is even, where $\mu(\beta)\in 2\Z$ is the Maslov index of $\beta$. 
In particular, if $L$ is spin and $4|c_1(TX)$ (i.e. $4|c_1(A)$ for every $A\in H_2(X)$), 
then $\mc{M}_{0,l}(X,A)^{\phi,\tau}$ or simply $\mc{M}_{l}(X,A)^{\phi,\tau}$ is orientable, 
while $\tau_{\mc{M}}$ is orientation-reversing on $\mc{M}_{1,l}^{\disk}(X,L,\beta)_{\dec}$.

The boundary of $\ov{\mc{M}}_{k,l}(X,A)^{\phi,\tau}$, i.e. the subspace of maps with at least one node, has two types of real (virtual) codimension one strata; see Figure~\ref{Fig:boundaryterms}. 
The first type, called \textsf{disk bubbling}, consists of maps from two spheres with a real point in common. 
This strata breaks into unions of components isomorphic to 
\begin{equation}\label{boundary-equ}
\mc{M}_{k_1+1,l_1}(X,A_1)^{\phi,\tau} \times_{(\ev^B_1,\ev^B_1)} \mc{M}_{k_2+1,l_2}(X,A_2)^{\phi,\tau}/G,
\end{equation}
where 
$$l_1+l_2=l,\quad k_1+k_2=k,\quad A_1+A_2=A, \quad
G=\begin{cases}\Z_2,& \hbox{\tn{if}}~k,l=0,~A_1=A_2;\\
\{1\},& \hbox{\tn{otherwise}}.
\end{cases}$$

\begin{figure}
\begin{pspicture}(-3,-2.5)(11,2.4)
\psset{unit=.3cm}
\pscircle*(0,-2){.3}
\psellipse[linewidth=.07](-4,-2)(4,1)
\psellipse[linewidth=.07](4,-2)(4,1)
\psarc[linewidth=0.07](-4,-2){4}{0}{180}
\psarc[linewidth=0.07](4,-2){4}{0}{180}
\rput(4,0){$u_2$}\rput(-4,0){$u_1$}\rput(0,-8){Disk bubbling}\rput(0,-4){node}\rput(-10,-4){$L$}
\psline[linewidth=0.07](-12,-5)(7,-5)(12,3)(-7,3)(-12,-5)
\psline[linewidth=0.07](13,-5)(32,-5)(37,3)(29,3)
\psline[linewidth=0.07](21,3)(18,3)(13,-5)
\rput(25,-8){Sphere bubbling}\rput(25,-4){collapsed boundary}\rput(15,-4){$L$}
\pscircle*(25,-2){.3}
\pscircle[linewidth=0.07](25,2){4}
\psellipse[linewidth=.07](25,2)(4,1)
\end{pspicture}
\caption{Half of the curves in the codimension one boundary strata of $\ov{\mc{M}}_{k,l}(X,A)^{\phi,\tau}$}\label{Fig:boundaryterms}
\end{figure}
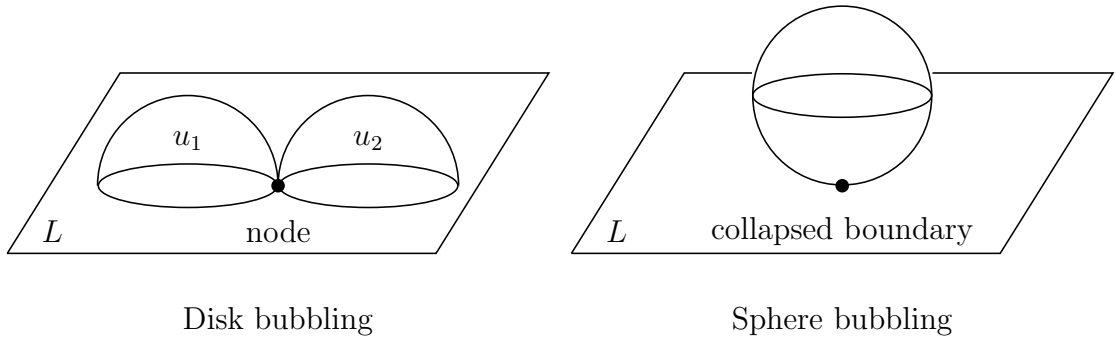

The second type, called \textsf{sphere bubbling}, appears only if $k=0$ and $A=B-\phi_*B$ for some $B\in H_2(X)$.
It consists of maps from  nodal domain $\Si=\P^1\cup_q \P^1$, taking the node $q$ to $L$. 
This strata is isomorphic to ($\Z_2$-quotient of) 
\begin{equation}\label{boundary-equ2}
 \bigsqcup_{\begin{subarray}{l} B\in H_2(X)\\ B-\phi_* B=A \end{subarray}} (\mc{M}_{1+l}(X,B)_{\dec}\times_{\ev_1} L),
\end{equation}
where the intersection point with $L$, which corresponds to the first marked point in the $1+l$ marked points, has no decoration. 
Note that the natural extension of $\tau_{\mc{M}}$ preserves each component of the domain of every map in the first case and interchanges them in the second case.

If a codimension one  strata is a boundary, an integral similar to (\ref{GW-invariants}) depends on the particular choices of the integrands (and other choices); thus, does not produce invariants. The boundary problem is present in nearly all cases. 
In the disk formulation, it has been overcome in a number of cases by either adding other terms to compensate for the effect of the boundary \cite{W1,W2,F} or 
by gluing boundary components to each other to get moduli spaces without boundary 
\cite{S,G2}. In the real curve formulation, the latter approach actually shows that the disk-bubbling strata is a hypersurface in the real moduli space across which the orientation extends; 
see Proposition~\ref{flip-orientation}.
None of these methods can address the issue of sphere bubbling; we address it in this paper.

\subsection{$\eta$-invariants}\label{sec:real}

The moduli spaces $\ov{\mc{M}}_{l}(X,A)^{\phi,\eta}$ have mostly been ignored in the literature. 
As we show, the codimension one boundary consists of maps from a wedge of two spheres taking the node to $L$. 
The restrictions of each map to the two spheres determine elements of $\mc{M}_{1+l}(X,B)_{\dec}$ and $\mc{M}_{1+l}(X,-\phi_*B)_{\dec}$ that differ by the involution
\begin{equation}
\begin{split}
\phi_{\mc{M}}\colon \mc{M}_{1+l}(X,B)_{\dec}\times_{\ev_1} L &\to \mc{M}_{1+l}(X,-\phi_*B)_{\dec}\times_{\ev_1} L ,\\
[u,z_0,(z_1,\ep_1),\cdots,(z_l,\ep_l)] &\to [\phi\circ u \circ c, c(z_0),(c(z_1),-\ep_1),\cdots,(c(z_l),-\ep_l) ],
\end{split}
\end{equation}
where $c\colon \P^1 \to \P^1$, and $c(z)=\bar{z}$. 
Thus, the codimension one boundary breaks into unions of components isomorphic to ($\Z_2$-quotient)
\begin{equation}\label{eq:sphere-bubbling}
   \mc{M}_{1+l}(X,B)_{\dec}\times_{\ev_1}L,
\end{equation}
with $B\in H_2(X)$ such that $A=B-\phi_*B$. In particular, if $\Fix(\phi)=\emptyset$, there are no codimension one boundary components, and we obtain the following result.

\begin{proposition}\label{boundary-freeness}
If $(X,\omega,\phi)$ is a symplectic manifold with a real structure $\phi$ and $\Fix(\phi)=\emptyset$, $\ov{\mc{M}}_{l}(X,A)^{\phi,\eta}$  has a topology with respect to which it is compact and Hausdorff. 
It has a Kuranishi structure without boundary of virtual real dimension
$$  d= c_1(A)+\dim_{\C}X-3+2l. $$
\end{proposition}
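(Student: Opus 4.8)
The plan is to transfer the topology, compactness and Hausdorffness from the ordinary Kontsevich space $\ov{\mc{M}}_{2l}(X,A)$, then build the Kuranishi structure by passing to the real part of the usual deformation--obstruction theory, and finally observe that $\Fix(\phi)=\emptyset$ removes every potential source of boundary. To set up the topology, I would first encode an $\eta$-real curve with $l$ ordered conjugate pairs as an ordinary genus zero stable map with the $2l$ ordered marked points $z_1,\eta(z_1),\dots,z_l,\eta(z_l)$, and consider the self-map $\Theta$ of $\ov{\mc{M}}_{2l}(X,A)$ obtained by post-composing with $\phi$, conjugating the complex structure of the domain, and transposing each conjugate pair of marked points. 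Because $\phi^*J=-J$, this preserves $J$-holomorphicity, and $\phi^2=\id$ makes it an anti-holomorphic involution. A point of $\Fix(\Theta)$ is a stable map $(u,\Si)$ carrying an anti-holomorphic involution $\si$ of its domain with $u\circ\si=\phi\circ u$ permuting the pairs; the induced order two automorphism of the genus zero dual tree fixes a vertex or an edge, but any fixed point of $\si$ on $\Si$ --- a node with swapped branches, or a circle of fixed points on a $\si$-invariant component --- must be sent by $u$ into $\Fix(\phi)=\emptyset$, which is impossible. Hence $\si$ is fixed-point free, the unique $\si$-invariant component is of $\eta$-type, all other components and all nodes come in conjugate pairs, and $(u,\Si,\si)$ is precisely a point of the stable-map compactification of $\mc{M}_l(X,A)^{\phi,\eta}$. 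Thus $\ov{\mc{M}}_l(X,A)^{\phi,\eta}=\Fix(\Theta)$, a closed subset of the compact Hausdorff space $\ov{\mc{M}}_{2l}(X,A)$ in the Gromov topology, hence compact and Hausdorff.

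Next I would build the Kuranishi structure chart by chart. At $\mf p=[u,\Si,\si,(z_j,\eta(z_j))]$ the relevant deformation complex is the $\si$-invariant part of the deformation--obstruction complex of the underlying stable map, which makes sense since $\phi^*J=-J$ makes the linearized $\ov\partial$-operator intertwine the natural real-linear involutions on its source and target. One chooses a finite-dimensional obstruction space $E_{\mf p}$ in the cokernel invariant under these involutions --- by averaging, or by spanning it with sections supported near $\si$-conjugate pairs of points --- and then runs the standard Fredholm-plus-gluing construction of \cite{FO},\cite{FOOO}, gluing the conjugate pairs of nodes with one complex parameter apiece; compatibility of charts and the existence of a good coordinate system go through as in the closed case. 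Counting dimensions, $\ov{\mc{M}}_{2l}(X,A)$ has virtual complex dimension $c_1(A)+\dim_{\C}X+2l-3$, and taking the $\si$-invariant part halves the real dimension of every term, giving $\dim^{\vir}\ov{\mc{M}}_l(X,A)^{\phi,\eta}=c_1(A)+\dim_{\C}X+2l-3=d$, as in \eqref{dimension}.

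Finally, I would check that the Kuranishi structure has no boundary: the only degenerations of an $\eta$-real genus zero curve with $\Fix(\phi)=\emptyset$ are collisions of marked points and the appearance of nodes in $\si$-conjugate pairs, and smoothing such a pair is governed by a single gluing parameter $s\in\C$ (the other neck parameter being forced to equal $\bar s$), so the real Deligne--Mumford space of domains is a closed orbifold and the charts are open subsets of orbifold vector bundles; equivalently, the codimension one boundary \eqref{boundary-equ} is empty when $\Fix(\phi)=\emptyset$. A compact Kuranishi space of virtual dimension $d$ without boundary then carries a virtual fundamental cycle in $H_d(\,\cdot\,;\mc{O})$, $\mc O$ its orientation sheaf, by the standard virtual-class machinery (\cite{FO},\cite{LT},\cite{FOOO}), and this is the asserted element.

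I expect the main obstacle to be the equivariant Kuranishi construction in the second step --- making the obstruction spaces, coordinate changes and gluing maps simultaneously compatible with $\si$ and $\phi$, so that the usual patching arguments survive the passage to the real part --- whereas the genuinely new feature relative to the well-studied disk moduli spaces, the complete absence of boundary, is exactly the geometric consequence of $\Fix(\phi)=\emptyset$ isolated in the first and third steps.
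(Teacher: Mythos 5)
Your overall strategy is the same as the paper's: realize $\ov{\mc{M}}_l(X,A)^{\phi,\eta}$ inside the compact Hausdorff space $\ov{\mc{M}}_{2l}(X,A)$, build the Kuranishi structure by restricting the deformation--obstruction theory to real ($\si$-invariant) sections with an equivariant obstruction space, and note that the only possible codimension-one degeneration requires a $\si$-fixed node mapping to $\Fix(\phi)$, which is empty. The dimension count and the observation that nodes come in conjugate pairs (so boundary strata have codimension at least two) also match Section~\ref{kuranishi-structure} and the discussion at the beginning of Section~\ref{ch:realcurves}.

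There is, however, a concrete gap in your first step. You assert that a point of $\Fix(\Theta)$ ``is a stable map $(u,\Si)$ carrying an anti-holomorphic \emph{involution} $\si$ with $u\circ\si=\phi\circ u$,'' and from this you deduce $\ov{\mc{M}}_l(X,A)^{\phi,\eta}=\Fix(\Theta)$. What the fixed-point condition actually gives you is an anti-holomorphic automorphism $\si$ of $(\Si,\vec z)$ with $u\circ\si=\phi\circ u$, but a priori only $\si^2\in\Aut(u,\Si,\vec z)$; it need not be the identity on boundary strata where the stable map has nontrivial automorphisms (multiple covers, symmetric ghost configurations). Indeed, fixed-point-free anti-holomorphic self-maps of $\P^1$ are not all involutions — $z\mapsto a/\bar z$ with $a\notin\R$ has no fixed points and is not an involution — so ``no fixed point'' does not force ``order two.'' The paper is aware of this kind of subtlety: the remark after the definition of $\tilde\phi$ (your $\Theta$) gives an example where $\Fix(\tilde\phi)\neq\mc{M}_l(X,A)^{\phi,\eta}$, and accordingly the paper never invokes the fixed-locus description; it instead takes $\ov{\mc{M}}_l(X,A)^{\phi,\eta}$ to be the Gromov closure of $\mc{M}_l(X,A)^{\phi,\eta}$ inside $\ov{\mc{M}}_{2l}(X,A)$, which is automatically closed, and then classifies directly which stable maps admit an involution $\eta_u$ with $|\Fix(\eta_u)|\le 1$. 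Your claim is also unnecessary for the result: closedness of the Gromov closure already gives compactness and Hausdorffness, so you can simply drop the identification with $\Fix(\Theta)$ (or prove the missing step, e.g.\ by arguing that $\si$ can be replaced by an odd power or modified component-by-component to produce a genuine involution whenever $\Fix(\phi)=\emptyset$). With that adjustment the rest of your argument parallels the paper's proof.
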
 
This proposition and Theorem~\ref{thm:main} are proved in Section \ref{ch:realcurves} based on \cite{FOOO} by providing the adjustments to the real case.
\begin{remark}
There are many symplectic manifolds $(X,\om)$ admitting anti-symplectic involutions without fixed points. 
For example, the involution $\eta_{2m-1}$ on $\P^{2m-1}$ defined in (\ref{equ:tau-eta}) has no fixed points. 
Furthermore, the symplectic cut of \cite[Section 2]{MF1} associates to each real symplectic manifold $(X,\om,\phi)$ with $\Fix(\phi)\cong S^n,\R\P^n$ a real symplectic manifold $(X_+,\om_+,\phi_+)$ with $\Fix(\phi_{+})=\emptyset$ by ``cutting out" $\Fix(\phi)$ and replacing that with a divisor. 
\end{remark}

In order to define invariants, we also need to consider the orientation problem, which has not been studied before. 
A \textsf{real structure} on a complex vector bundle $E\to X$ is an anti-complex linear involution $\phi_E\colon E\to E$ covering $\phi$. 
A \textsf{real square root} of a complex line bundle $\cL \to X$ with real structure $\phi_\cL$ is a complex line bundle 
$\cL'\to X$ with real structure $\phi_{\cL'}$ such that 
$$ (\cL,\phi_\cL) \cong (\cL'\otimes \cL', \phi_{\cL'}\otimes \phi_{\cL'}).$$ 
The involution $\phi$ on $X$ canonically lifts to an involution $\phi_{K_X}$ on the complex line bundle $K_X=\Lambda_{\C}^{\top} T^*X$.

\begin{theorem}\label{square-root-orientation-thm}
Let $(X,\om,\phi)$ be a symplectic manifold with a real structure. 
If $(K_X,\phi_{K_X})$ admits a real square root, all moduli spaces $\mc{M}_{l}(X,A)^{\phi,\eta}$ are orientable. 
Moreover, a choice of real isomorphism $(K_X,\phi_{K_X}) \cong (\cL\otimes \cL, \phi_{\cL}\otimes \phi_{\cL})$ canonically determines the orientation.
\end{theorem}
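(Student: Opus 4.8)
The plan is to reduce the orientability of (the Kuranishi structure on) $\mc{M}_l(X,A)^{\phi,\eta}$ to the orientability of the determinant line bundle of the family of real linear Cauchy--Riemann operators carried by its virtual tangent space, and then to trivialize that line using the square root. At a point $[u,\P^1,(z_j,\eta(z_j))_{j=1}^l]$ the virtual tangent space is the index of the real linear CR operator $D_{(u^*TX,\tilde\phi_u)}$ on the real bundle pair $(u^*TX,\tilde\phi_u)\to(\P^1,\eta)$ induced by $d\phi$, twisted by the Lie algebra of $\Aut(\P^1,\eta)$ and by the tangent lines at the marked points. Here $\Aut(\P^1,\eta)=\mathrm{PSL}(2,\C)^\eta\cong\mathrm{SO}(3)$ is connected, so its contribution is a trivial orientation line (one fixes an orientation of its Lie algebra once and for all, as a background convention), and each conjugate pair of interior marked points contributes the complex line $T_{z_j}\P^1$ (the point $\eta(z_j)$ being determined), which is canonically oriented. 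Thus $\mc{M}_l(X,A)^{\phi,\eta}$ is orientable iff the determinant line bundle $\det D$ of the family $\{D_{(u^*TX,\tilde\phi_u)}\}$ is, with orientations corresponding. Throughout I would use that the space of real CR operators on a fixed real $C^\infty$ bundle pair over $(\P^1,\eta)$ is affine, hence connected, so that an orientation of one fiber of $\det D$ determines an orientation of every fiber and a $C^\infty$-isomorphism of real bundle pairs induces a well-defined isomorphism of determinant lines, and that $\det D$ is multiplicative under direct sums of real bundle pairs (with a fixed ordering/sign convention).

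Next I would cut the rank down to one. Performing the Grothendieck splitting of $u^*TX$ $\eta$-equivariantly, $(u^*TX,\tilde\phi_u)$ is isomorphic, as a real $C^\infty$ bundle pair over $(\P^1,\eta)$, to $(\Lambda,\tilde\phi_\Lambda)\oplus(\underline{\C},\tilde\phi_0)^{\oplus(n-1)}$, where $n=\dim_\C X$, $\Lambda:=\Lambda^n_\C u^*TX$, $\tilde\phi_\Lambda$ is the induced real structure on the determinant line, and $(\underline{\C},\tilde\phi_0)$ is the trivial real pair $\tilde\phi_0(z,v)=(\eta(z),\bar v)$. For the latter $H^0_{\R}=\R$ (the real constants) and $H^1=0$, so $\det D_{(\underline{\C},\tilde\phi_0)}=\R$ is canonically oriented. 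Combined with the toolkit of the previous paragraph this yields a canonical isomorphism $\det D_{(u^*TX,\tilde\phi_u)}\cong\det D_{(\Lambda,\tilde\phi_\Lambda)}$, reducing the problem to canonically orienting the determinant line of a single real line--bundle--pair CR operator over $(\P^1,\eta)$.

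The square root enters at this point. Since $(K_X,\phi_{K_X})\cong(E\otimes E,\phi_E\otimes\phi_E)$ and $K_X=\Lambda^n_\C T^*X=(\Lambda^n_\C TX)^*$, pulling back along $u$ (which satisfies $u=\phi\circ u\circ\eta$) gives a real isomorphism $(\Lambda,\tilde\phi_\Lambda)\cong\big((u^*E^*)^{\otimes2},\tilde\phi_S^{\otimes2}\big)$, i.e. $S:=(u^*E^*,\tilde\phi_S)$ is a real square root of $(\Lambda,\tilde\phi_\Lambda)$; recall that a line bundle $\mc{O}(d)$ over $(\P^1,\eta)$ admits a \emph{real} lift of $\eta$ only when $d$ is even (and a quaternionic one when $d$ is odd), so the existence of $S$ forces $4\mid c_1(A)$ --- consistent with $\mc{M}_l(X,A)^{\phi,\eta}$ being empty otherwise. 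I would then deform the rank--$2$ real pair $(\Lambda,\tilde\phi_\Lambda)\oplus(\underline{\C},\tilde\phi_0)\cong S^{\otimes2}\oplus(\underline{\C},\tilde\phi_0)$ to $S\oplus S$ through real $C^\infty$ bundle pairs of the same rank and $c_1$, which exists by the topological classification of real bundle pairs over $(\P^1,\eta)$ --- this is exactly where $S$ being a \emph{real}, not merely quaternionic, square root is used. Multiplicativity then supplies canonical isomorphisms
\[
\det D_{(\Lambda,\tilde\phi_\Lambda)}\;\cong\;\det D_{(\Lambda,\tilde\phi_\Lambda)}\otimes\det D_{(\underline{\C},\tilde\phi_0)}\;\cong\;\det D_S\otimes\det D_S\;=\;(\det D_S)^{\otimes2},
\]
and the right-hand side carries the canonical orientation of the square of a one-dimensional real vector space. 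Transporting this back through the isomorphisms of the first two paragraphs orients $\det D_{(u^*TX,\tilde\phi_u)}$.

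Finally, all of the above is carried out with the bundle pair $E\to X$ pulled back over the universal curve of $\mc{M}_l(X,A)^{\phi,\eta}$, so the equivariant splitting, the identification of the trivial summands, and the deformation vary continuously in $[u]$ and patch to an orientation of the determinant line bundle over the whole moduli space; this gives orientability and a canonical orientation attached to the chosen real isomorphism. Independence of the auxiliary data (the Grothendieck splitting, the metric presenting $D$, the deformation path) follows from connectedness of the relevant spaces of real CR operators, and the orientation depends on the real square root only through the homotopy class of $(K_X,\phi_{K_X})\cong(E\otimes E,\phi_E\otimes\phi_E)$, since two isomorphisms in the same class induce homotopic identifications everywhere. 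The main obstacle is the topological bookkeeping of real bundle pairs over $(\P^1,\eta)$ used in the third paragraph --- setting up the equivariant Grothendieck splitting, the real/quaternionic dichotomy for line bundle pairs, the $\bmod 4$ degree constraints, and checking that the deformation of $\Lambda\oplus\underline{\C}$ to $S\oplus S$ respects the real structures --- which is precisely the point at which the square--root hypothesis is indispensable; everything else is formal manipulation of determinant lines.
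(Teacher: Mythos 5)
Your outline takes a genuinely different route from the paper. The paper (Section~\ref{orientation}) never appeals to a splitting of $u^*TX$ or to a classification of real bundle pairs over $(\P^1,\eta)$; instead it works with \emph{admissible trivializations} of $(E,\phi)$ over $\C^*\subset\P^1$, pinches $\C^*$ at two conjugate circles to produce a three-component nodal curve, and transfers the orientation problem to the central $\P^1$-component, where the real Cauchy--Riemann problem is elementary (Lemma~\ref{trivialization-to-orientation}). The whole argument then rests on Lemma~\ref{homotopy-classes}: the set of homotopy classes of admissible trivializations of a real bundle pair over $\C^*$ is a $\Z/2$-torsor, and the second statement of that lemma is precisely what makes the induced class of trivializations of $u^*K_X\approx(u^*E')^{\otimes 2}$ independent of the choice made on $u^*E'$. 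Your approach instead decomposes the real bundle pair, writes $\det D_{u^*TX}\cong\det D_{\La\oplus\underline\C}$ with $\La\cong S^{\otimes 2}$, deforms $S^{\otimes 2}\oplus\underline\C$ to $S\oplus S$, and invokes the canonical orientation of $(\det D_S)^{\otimes 2}$; this is closer in spirit to the Georgieva--Zinger style of orientation arguments via direct-sum decompositions and deformations of real bundle pairs.

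There is, however, a genuine gap exactly at the deformation step, and it is the same $\Z/2$ that the paper's Lemmas~\ref{homotopy-classes}--\ref{trivialization-to-orientation} are designed to control. You write that the middle isomorphism $\det D_{\La}\otimes\det D_{\underline\C}\cong\det D_S\otimes\det D_S$ is ``canonical'' by multiplicativity, and later that independence of the deformation path ``follows from connectedness of the relevant spaces of real CR operators.'' Neither is accurate. Multiplicativity only gives canonical isomorphisms for a \emph{fixed} direct sum; passing from $S^{\otimes 2}\oplus\underline\C$ to $S\oplus S$ requires a choice of isomorphism of real bundle pairs (equivalently, a path of pairs of the same rank and degree), and different choices differ by an element of $\pi_0$ of the real gauge group of $S\oplus S$. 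Connectedness of the space of CR operators on a \emph{fixed} pair is irrelevant to this; the path-independence you need is a $\pi_1$ statement about the space of bundle pairs (equivalently, a $\pi_0$ statement about the real gauge group), and the paper's Lemma~\ref{homotopy-classes} shows that the analogous $\pi_0$ for trivializations over $\C^*$ is $\Z/2$, with the two classes producing \emph{opposite} orientations (second sentence of Lemma~\ref{trivialization-to-orientation}). In other words, in your framework there are two homotopy classes of identifications of $\det D_{\La\oplus\underline\C}$ with $(\det D_S)^{\otimes 2}$, they yield opposite orientations, and nothing you have written explains how the square-root structure selects one of them. The square root does remove the $\Z/2$-ambiguity in orienting $(\det D_S)^{\otimes 2}$ itself, but you have replaced that ambiguity with an unaddressed one in the choice of deformation; this is precisely the point the paper resolves by showing that the two homotopy classes of admissible trivializations of $u^*E'$ induce the \emph{same} homotopy class for $u^*(E')^{\otimes 2}$. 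To make your argument work you would need either to single out a preferred deformation of $S^{\otimes 2}\oplus\underline\C$ to $S\oplus S$ from the data $(K_X,\phi_{K_X})\cong(E\otimes E,\phi_E\otimes\phi_E)$, or to verify that the generator of $\pi_0$ of the real gauge group acts trivially on the determinant line --- but the paper's Lemma~\ref{trivialization-to-orientation} indicates the latter is false.
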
 

This theorem is proved in Section~\ref{orientation}. 
By abuse of terminology, throughout the rest of this paper, by a real square root we mean a choice of complex line bundle $\cL$ on $X$ with a real structure $\phi_\cL$, together with a choice of real bundle isomorphism $(K_X,\phi_{K_X}) \cong (\cL\otimes \cL, \phi_{\cL}\otimes \phi_{\cL})$.

\begin{remark}\label{evenness-rmk}
If $\cL\to \P^1$ is a holomorphic line bundle with a complex anti-linear involution lift $\tilde{\eta}$ of $\eta\colon \P^1 \to \P^1$, for all $k\in \Z$ there is a decomposition $$H^0(\cL\otimes (T\P^1)^{\otimes k})= H_+^0(\cL\otimes (T\P^1)^{\otimes k})\oplus H_-^0(\cL\otimes (T\P^1)^{\otimes k})$$
into the $\pm1$ eigenspaces of the endomorphism 
$$ H^0(\cL\otimes (T\P^1)^{\otimes k}) \to H^0(\cL\otimes (T\P^1)^{\otimes k}) ,\quad \xi \ra \tilde{\eta}\circ \xi \circ \eta;$$
the two eigenspaces are interchanged by the action of $\mf{i}$. 
Since the action of $\eta$ on $\P^1$ has no fixed points and $H^0(\cL\otimes (T\P^1)^{\otimes k})$ is nonzero for $k$ large enough, the zeros of every element of $H_+^0(\cL\otimes (T\P^1)^{\otimes k})$
come in pairs and thus $\deg \cL$ is even. 
Hence, if $\mc{M}_{l}(X,A)^{\phi,\eta}$ is non-empty, then $2|K_X(A)$. 
Thus, if $K_X$ has a real square root, then $4|K_X(A)$ whenever $\mc{M}_{l}(X,A)^{\phi,\eta}$ is non-empty. 
The last requirement can not be removed. For example, if $(X,\phi)=(\P^{4m+1},\tau_{4m+1})$, then $K_X$ has a real square root but $4\nmid K_X(\ell)$, where $\ell \subset H_2(\P^{4m+1})$ is the homology class of complex projective line. 
\end{remark}
 
If $(X,\om,\phi)$ is a K\"ahler manifold with an anti-holomorphic anti-symplectic involution $\phi$ and $\cL'\to X$ is a holomorphic line bundle, 
then $\cL'\otimes \ov{\phi^*\cL'}$ is a holomorphic line bundle with a real structure. 
Hence, if $\cL\ra X$ is a holomorphic line bundle, $\cL=\cL'\otimes \cL'$, and $\ov{\phi^*\cL'} \cong \cL'$, then $\cL$ admits a real structure. 
Suppose $4|K_X$, i.e. there is a divisor $D$ such that $K_X=[4D]$. 
Since $\ov{\phi^*K_X}=K_X$, it follows that $[D-\ov{\phi_*D}]$ is torsion.
\begin{proposition}\label{prop:existence}
Let $(X,\om,\phi)$ be a symplectic manifold with a real structure.
If either
\begin{enumerate}[leftmargin=*]
\item $H^1(X;\R)=0$ and $c_1(TX)=4\alpha$ for some $\alpha\in H^2(X;\Z)$ such that $\alpha=-\phi^*\alpha$, or
 \item $X$ is compact K\"ahler, $\phi$ is anti-holomorphic, and $K_X=[4D]$ for some divisor $D$ on $X$ such that $[D]=[\ov{\phi_*D}]$,
\end{enumerate}
then $(K_X,\phi_{K_X})$ admits a real square root.
\end{proposition}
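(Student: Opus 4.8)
\emph{Plan.} For either part I would exhibit the square root directly: a complex line bundle $E'\to X$, an anticomplex linear involution $\phi_{E'}$ of $E'$ covering $\phi$, and a complex linear isomorphism $\Psi\colon E'\otimes E'\to K_X$ with $\Psi\circ(\phi_{E'}\otimes\phi_{E'})=\phi_{K_X}\circ\Psi$. Since a complex line bundle on $X$ is determined up to isomorphism by $c_1$, and $c_1(K_X)=-c_1(TX)$ is divisible by $4$ in $H^2(X;\Z)$ under either hypothesis, there is a complex line bundle $E'$ with $c_1(E')=\tfrac12c_1(K_X)$, whence $E'\otimes E'\cong K_X$ as complex line bundles. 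Moreover $c_1(\ov{\phi^*E'})=-\phi^*c_1(E')$, which equals $c_1(E')$ by the hypothesis $\alpha=-\phi^*\alpha$ (resp.\ $[D]=[\ov{\phi_*D}]$), so $\ov{\phi^*E'}\cong E'$, and therefore $E'\otimes E'\cong E'\otimes\ov{\phi^*E'}$, which carries the tautological real structure $v\otimes\ov w\mapsto w\otimes\ov v$ (an anticomplex linear involution over $\phi$); this is the observation preceding the proposition, and it holds smoothly as well. So \emph{some} real structure on $K_X$ always exists; the whole content of the proposition is to match the one we get with the canonical $\phi_{K_X}$.

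\emph{The K\"ahler case (ii).} Here everything is holomorphic. I take $E'=\mc{O}(2D)$, fix an isomorphism $\iota\colon\ov{\phi^*\mc{O}(2D)}=\mc{O}(2\ov{\phi_*D})\xrightarrow{\ \sim\ }\mc{O}(2D)$ coming from $[D]=[\ov{\phi_*D}]$, and transport the tautological real structure along $\id\otimes\iota$ to get a holomorphically compatible real structure on $E'\otimes E'\cong\mc{O}(4D)\cong K_X$. It differs from $\phi_{K_X}$ by a holomorphic automorphism of $K_X$ over the identity, i.e.\ --- as $X$ is compact and connected --- by a nonzero constant $c$, necessarily of modulus one since both maps are involutions. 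Rescaling the identification $E'\otimes E'\cong K_X$ by a constant $\la$ multiplies the transported real structure by $\la/\ov\la$, and $\la\mapsto\la/\ov\la$ surjects onto the unit circle, so choosing $\la$ with $\la/\ov\la=c^{-1}$ produces the desired real square root.

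\emph{The general case (i), and the main obstacle.} I would run the same construction in the smooth category: $E'$ with $c_1(E')=-2\alpha$, an isomorphism $\ov{\phi^*E'}\cong E'$, and the tautological real structure transported to $E'\otimes E'\cong K_X$. The resulting real structure now differs from $\phi_{K_X}$ by a nowhere-zero function $g\colon X\to\C^*$ subject only to $g\circ\phi=1/\ov g$, and a real square root of $(K_X,\phi_{K_X})$ exists iff, after exhausting the freedom to rescale the two chosen isomorphisms (each rescaling multiplies $g$ by a factor of the form $(\nu\circ\phi)/\ov\nu$), one can arrange $g\equiv1$. The constraint makes $|g|$ a positive function with $|g|\circ\phi=1/|g|$, and rescaling by the positive function $\mu$ with $\log\mu=-\tfrac12\log|g|$ (so that $(\mu\circ\phi)/\mu=|g|$) reduces to $|g|=1$; what remains is to write the $\phi$-invariant, $S^1$-valued function $g$ in the form $\ov\nu/(\nu\circ\phi)$, equivalently to extract a square root of $g$, equivalently to see that $[g]\in H^1(X;\Z)$ is divisible by $2$. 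Invariantly, the group of real line bundles on $(X,\phi)$ is isomorphic to $H^2_{\Z_2}(X;\Z(1))$, tensoring with itself is multiplication by $2$, and the obstruction to a real square root of $(K_X,\phi_{K_X})$ is the image of its class in $H^2_{\Z_2}(X;\Z(1))/2\hookrightarrow H^2_{\Z_2}(X;\Z/2)$; its ``non-equivariant'' part is $c_1(K_X)=-4\alpha$ reduced mod $2$, hence zero, and the hypothesis $H^1(X;\R)=0$ makes $H^1(X;\Z)$ finite, which (together with $H^2(\Z_2;\Z(1))=0$ and the vanishing of higher $\Z_2$-group cohomology with coefficients in $\R$-vector spaces) confines the remainder of the obstruction to a finite $2$-group. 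I expect the bulk of the argument --- the main obstacle --- to be precisely the verification that this residual obstruction vanishes for the class of $(K_X,\phi_{K_X})$, using that $c_1(K_X)$ is divisible by $4$ and not merely by $2$.
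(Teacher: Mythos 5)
Your K\"ahler argument in part~(ii) is, in substance, the paper's: by compactness and holomorphicity, any two anti-holomorphic lifts of $\phi$ to $K_X$ differ by a unit constant (Lemma~\ref{lem:lift1}), and a constant rescaling of the identification $E'\otimes E'\cong K_X$ absorbs it.

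Part~(i) has two genuine gaps. First, a real square root requires an anticomplex \emph{involution} $\phi_{E'}$ on the square-root bundle itself, not merely a real structure on $K_X$. The tautological involution of $E'\otimes\ov{\phi^*E'}$, transported to $E'\otimes E'$, is of the form $\phi_{E'}\otimes\phi_{E'}$ only after one knows the anticomplex bundle map $\sigma$ encoded by $\iota\colon\ov{\phi^*E'}\xrightarrow{\ \sim\ }E'$ can be rescaled to an involution; this is where the divisibility of $c_1(TX)$ by $4$ (rather than merely by $2$) is actually used --- presenting the square-root bundle as $(L'\otimes\ov{\phi^*L'})^*$ with $c_1(L')=\alpha$ manufactures $\phi_{E'}$ for free, while divisibility by $2$ alone need not: compare Remark~\ref{evenness-rmk}, where odd-degree line bundles over $(\P^1,\eta)$ admit no anticomplex involution lifting $\eta$. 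Second --- the gap you flag yourself --- the ``residual obstruction'' in the matching step simply does not exist, and $4\,|\,c_1(K_X)$ plays no role at that stage. Since $H^1(X;\Z)\cong\Hom(H_1(X;\Z),\Z)$ is torsion-free, the hypothesis $H^1(X;\R)=0$ forces $H^1(X;\Z)=0$, not merely finite. Hence $g$ lifts to $\tilde g\colon X\to\R$, and $g\circ\phi=g$ together with $\phi^2=\id$ forces $\tilde g\circ\phi=\tilde g$ with no added period, so $\nu=e^{-\pi\mf{i}\tilde g}$ satisfies $\nu\circ\phi=\nu$, $|\nu|=1$, and $\ov\nu/(\nu\circ\phi)=\nu^{-2}=g$. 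This square-root-of-a-function argument is exactly the paper's Lemma~\ref{lem:lift2}, and there is no finite $2$-group left to chase.
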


We prove this proposition in Section~\ref{sec:remarks}. 
An example with $\ov{\mc{M}}_{l}(X,A)^{\phi,\eta}$ non-orientable is described in Section~\ref{sec:remarks}. 
In the simply connected case, \cite[Example 2.6]{XCapsSetup} provides an example
 where $\ov{\mc{M}}_{l}(X,A)^{\phi,\eta}$ is not orientable.
 
\subsection{Real GW invariants}\label{mixed-invariants}

If $L=\Fix(\phi)\neq \emptyset$ and the sphere bubbling is present ($k=0$ and $A=B-\phi_*B$ for some $B\in H_2(X)$), we cannot define either the $\tau$-invariants nor the $\eta$-invariants separately. It is noted in \cite[Section 1.5]{PSW}, that in order to get well-defined invariants in these case, the moduli spaces $\ov{\mc{M}}_{l}(X,A)^{\phi,\tau}$ and $\ov{\mc{M}}_{l}(X,A)^{\phi,\eta}$ need to be combined somehow. This is achieved in this paper.

As described in Sections \ref{sec:open} and \ref{sec:real}, the codimension one boundary corresponding to sphere bubbling in $\ov{\mc{M}}_{l}(X,A)^{\phi,\tau}$ is the same as the codimension one boundary of $\ov{\mc{M}}_{l}(X,A)^{\phi,\eta}$.
By attaching $\ov{\mc{M}}_{l}(X,A)^{\phi,\tau}$ and $ \ov{\mc{M}}_{l}(X,A)^{\phi,\eta}$ along their common boundary (i.e. considering all genus 0 real curves representing class $A$), we obtain a moduli space $\ov{\mc{M}}_{l}(X,A)^{\phi}$ whose only possible codimension one boundary corresponds to disk bubbling. We then use the results of \cite{S} and \cite{G2} and observe that the codimension one strata of $\ov{\mc{M}}_{l}(X,A)^{\phi}$ corresponding to disk bubbling are in fact hypersurfaces and therefore $\ov{\mc{M}}_{l}(X,A)^{\phi}$ (virtually) does not have any codimension one boundary.

If $K_X$ has a real square root, $L$ is spin, and $4|c_1(TX)$, the moduli spaces $\ov{\mc{M}}_{l}(X,A)^{\phi,\eta}$ and $\ov{\mc{M}}_{l}(X,A)^{\phi,\tau}$ are oriented. By studying the orientation along the common boundary we show that the union is also orientable. 

If $K_X$ admits a real square root, $(\cL,\phi_\cL)$, as above,
\begin{equation}\label{equ:ind-tri}
\La_\R^{\top}TL = \cL^{\phi_\cL} \otimes \cL^{\phi_\cL},
\end{equation}
thus the Lagrangian $L$ is orientable and the induced orientation is independent of the choice of real square root. 
A spin structure on $L$ is a trivialization $L[2]\times \R^{\dim_\R L}$ of $TL$ over the $2$-dimensional skeleton $L[2]$ of a triangulation of $L$. 
Given such a trivialization, by taking the determinant of that, we obtain a trivialization of 
$\La_\R^\top TL$ over $L[2]$. 
Therefore, if we know that $L$ is orientable, there is a unique choice of orientation on $L$ which is equal to the one induced by the spin structure on $(\La_\R^\top TL)|_{L[2]}$ as above. 

\begin{definition}\label{dfn:comp-spin-sqroot}
We say that a given real square root for $K_X$ and a given spin structure on $L$ are \textsf{compatible} if their induced orientations on $L$ as above are reverse of each other. 
\end{definition}

In the situation of Definition~\ref{dfn:comp-spin-sqroot}, we would orient $L$ with the induced orientation of the spin structure.

\begin{theorem}\label{thm:main}
If $(X,\om,\phi)$ is a symplectic manifold with a real structure $\phi$, $\ov{\mc{M}}_l(X,A)^{\phi}$ has a topology with respect to which it is compact and Hausdorff. It has a Kuranishi structure without boundary of virtual real dimension 
$$d=c_1(A)+dim_{\C}(X)-3+2l.$$
If in addition $4|c_1(TX)$, then a compatible pair of a real square root for $K_X$ and a spin structure on $L$ determines an orientation on $\ov{\mc{M}}_l(X,A)^{\phi}$, hence a virtual fundamental class $[\ov{\mc{M}}_l(X,A)^{\phi}]^\vir$.
\end{theorem}

We prove the first part of this theorem in Section~\ref{kuranishi-structure} and the second part in Section~\ref{ch:mixed-invariants}. 
We call the resulting invariants \textsf{real} GW invariants. 
The moduli space $\ov{\mc{M}}_{l}(X,A)^{\phi}$ provides a framework to define real GW invariants without any restriction on the topology of the image or the involution. 
If $\mc{M}_{l}(X,A)^{\phi,\eta}$ or $\mc{M}_{l}(X,A)^{\phi,\tau}$ is empty, the real invariants reduce to the disk invariants or $\eta$-invariants above. If $\ov{\mc{M}}_{l}(X,A)^{\phi}$ is not orientable, we may still consider invariants with twisted coefficients (coefficients in the orientation bundle). 
For example, \cite{G} shows that for some cases where both the Deligne-Mumford space and $\mc{M}_{k,l}(X,A)^{\phi,\tau} $ are not orientable, invariants with twisted coefficients pulled-back from the Deligne-Mumford moduli space exist. In our case, the Deligne-Mumford space is orientable, however, one may still find non-orientable geometric cycles within $\ov{\mc{M}}_{l}(X,A)^{\phi}$ that provide the necessary twisting coefficients.

For example, if $\Fix(\phi)\!=\! L\cong\! S^3$, $X$ is a real symplectic Calabi-Yau threefold\footnote{Following \cite[Section 14.2]{GHJ}, by a ``symplectic Calabi-Yau" we mean a connected symplectic manifold of vanishing first Betty number.}, and $A\in H_2(X)$ is non-trivial, then $\ov{\mc{M}}(X,A)^{\phi}$ is (virtually) zero-dimensional and orientable. In fact, $TL$ is trivializable (hence it is spin), by Proposition~\ref{prop:existence} every real symplectic Calabi-Yau threefold admits a real square root, and by Theorem~\ref{thm:main} we should choose the one which is compatible with the chosen spin structure on $L$; therefore, the orientation of $\ov{\mc{M}}(X,A)^{\phi}$ depends on the choice of spin structure on $L$. In this case we cannot define disk invariants or $\eta$-invariants separately. We define genus $0$ real GW invariants of $(X,\phi)$ by 
$$ N_A^{\phi}(X)= \# [\mc{M}(X,A)^{\phi}]^{\vir} \in \Q.$$  
By applying the degeneration technique of \cite{MF1}, we prove the following Theorem in Section~\ref{sec:zeroes}.
It implies that for some $J$, the only contribution to $N_A^{\phi}(X)$ is from $\eta$-curves. Hence, it demonstrates that  considering only $J$-holomorphic disks does not suffice to get non-trivial invariants in this set of examples.

\begin{theorem}\label{zeroness}
Let $(X,\omega,\phi)$ be a real symplectic Calabi-Yau threefold. If $\Fix(\phi)\!\cong\! S^3$, for every nonzero $A\in H_2(X,\Z)$, there exists an almost complex structure $J\in \mc{J}_{\omega,\phi}$ such that the $\tau$-moduli space $\ov{\mc{M}}(X,A,J)^{\phi,\tau}$ is empty.
\end{theorem}
 
In fact, in \cite{MF1}, we show that there is a natural Hamiltonian $S^1$-action on a neighborhood of $L$ in $X$. Applying the symplectic cut and symplectic sum procedures to this action, we build a symplectic fibration $\pi:\mc{X}\to \De$ over a disk in $\C$, where the smooth fibers are symplectomorphic to $X$ and the central fiber is normal crossing, $X_0=X_-\cup_D X_+$. We get an induced anti-symplectic involution on $\mc{X}$ which leaves $X_{\pm}$ invariant and restricted to $X_+$ has no fixed point. Moreover, we get a canonical inclusion of $H_2(X)$ in $H_2(X_+)$. Via the symplectic sum procedure, every almost complex structure $J_0$ on $X_0$, i.e. a union of two almost complex structures $J_+$ and $J_-$ on $X_+$ and $X_-$, respectively, where both preserve $D$, extends to an almost complex structure $J$ on $\mc{X}$ which is compatible with the fibration and the symplectic structure. We can think of $J_\la=J|_{X_\la}$, $X_\la=\pi^{-1}(\la)$, as a family of almost complex structures on $X$ converging to a singular almost complex structure.
Then, for any $E>0$, we show that there exists $J_0$ and $0<\la_0$ such that $\ov{\mc{M}}(X,A,J_\la)^{\phi,\tau}$, whenever $0<\la<\la_0$ and $\omega(A)<E$, is empty.

\subsection{Projective spaces  \tn{(joint with A.~Zinger)}}\label{IntroPn_subs}
We now discuss in some detail the case $X\!=\!\P^{2m-1}$. 
The involutions \hbox{$\tau,\eta\!:\P^1\to\P^1$} 
are special cases of the anti-holomorphic involutions
$$\tau_{2m-1},\eta_{2m-1}\!:\P^{2m-1}\to\P^{2m-1},$$ 
where
\begin{equation}\label{equ:tau-eta}
\aligned
\tau_{2m-1}\big([Z_1,Z_2,\ldots,Z_{2m-1},Z_{2m}]\big)
&=\big([\bar{Z}_2,\bar{Z}_1,\ldots,\bar{Z}_{2m},\bar{Z}_{2m-1}]\big),\\
\eta_{2m-1}\big([Z_1,Z_2,\ldots,Z_{2m-1},Z_{2m}]\big)
&=\big([-\bar{Z}_2,\bar{Z}_1,\ldots,-\bar{Z}_{2m},\bar{Z}_{2m-1}]\big).
\endaligned
\end{equation}
The fixed locus of~$\tau_{2m-1}$ is the real projective space~$\R\P^{2m-1}$,
while the fixed locus of~$\eta_{2m-1}$ is empty.
The latter implies 
\begin{equation}\label{etaPn_e}
\ov{\mc{M}}_l(\P^{2m-1},d)^{\eta_{2m-1},\tau}=\emptyset.
\end{equation}
The next observation is established in Section~\ref{Podd}.

\begin{lemma}\label{new-lemma}
Suppose $d,m\!\in\!\Z^{+}$ and $l\!\in\!\Z^{\geq 0}$. Then,
\begin{equation}\label{degPn_e}\begin{split}
\ov{\mc{M}}_l(\P^{2m-1},d)^{\tau_{2m-1},\eta}=\emptyset\qquad&\hbox{if}~~d\!\not\in\!2\Z,\\
\ov{\mc{M}}_l(\P^{2m-1},d)^{\eta_{2m-1},\eta}=\emptyset \qquad&\hbox{if}~~d\!\in\!2\Z.
\end{split}
\end{equation}
\end{lemma}

Since $K_{\P^{4m-1}}=[-4m\P^{4m-2}]$ and $\R\P^{4m-1}$ is spin, 
by Proposition~\ref{prop:existence} and Theorem~\ref{thm:main},
$\ov{\mc{M}}_l(\P^{4m-1},d)^{\phi}$ is orientable for $\phi\!=\!\tau_{4m-1},\eta_{4m-1}$. 
In fact, Euler's sequence of holomorphic vector bundles 
\begin{equation}\label{Pnses_e} 
0\lra \P^{n-1}\!\times\!\C \stackrel{f}{\lra} n\mc{O}_{\P^{n-1}}(1) 
\stackrel{h}{\lra} T\P^{n-1}\lra 0
\end{equation}
over $\P^{n-1}$ provides a canonical compatible pair of real square root for $K_{\P^{n-1}}$ 
and spin structure for $\R\P^{n-1}$, whenever $n\!=\!4m$; see Section~\ref{orient_subs}.
For $l,t_1,\ldots,t_l\!\in\Z^+$, we can then define 
\begin{equation}\label{numsPndfn_e}
N_d^{\phi}(t_1,\ldots,t_l)=\int_{\ov{\mc{M}}_l(\P^{4m-1},d)^{\phi}} \ev_1^*H^{t_1}\wedge\ldots\wedge\ev_l^*H^{t_l},
\end{equation}
where $H\in H^2(\P^{4m-1},\Z)$ is the hyperplane class.

\begin{theorem}\label{equalGW}
For all $m,d,l,t_1,\ldots,t_l\in\Z^+$, 
\begin{equation}\label{numsPn_e}N_d^{\eta_{4m-1}}(t_1,\ldots,t_l)=-N_d^{\tau_{4m-1}}(t_1,\ldots,t_l).
\end{equation}
Furthermore, these invariants vanish if $d\!\in\!2\Z$ or $t_k\!\in\!2\Z$ for some~$k$.
\end{theorem}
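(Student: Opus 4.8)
The plan is to evaluate both $N_d^{\tau_{4m-1}}(t_1,\dots,t_l)$ and $N_d^{\eta_{4m-1}}(t_1,\dots,t_l)$ by the real (virtual) equivariant localization formula and then to match the two resulting graph sums term by term. First I would group the homogeneous coordinates on $\P^{4m-1}$ into the $2m$ conjugate pairs $(z_{2i-1},z_{2i})$ defining $\tau_{4m-1},\eta_{4m-1}$ and let $\T=(\C^{*})^{2m}$ act by $\mu\cdot[\dots,z_{2i-1},z_{2i},\dots]=[\dots,\mu_iz_{2i-1},\mu_i^{-1}z_{2i},\dots]$; a direct check gives $\phi\circ(\mu\cdot\,)=(\bar\mu^{-1}\cdot\,)\circ\phi$ for $\phi=\tau_{4m-1}$ and for $\phi=\eta_{4m-1}$, so in both cases $\T$ acts on $\ov{\mc{M}}_l(\P^{4m-1},[d])^{\phi}$ and localization applies to the $\T$-equivariant Kuranishi structure and canonical orientation of Theorem~\ref{thm:main} (the orientation being the one coming from \eqref{Pnses_e}). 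The $\T$-fixed points of $\P^{4m-1}$ are the $4m$ coordinate points, on which $\phi$ acts as the fixed-point-free involution $p_{2i-1}\leftrightarrow p_{2i}$ \emph{regardless} of whether $\phi=\tau_{4m-1}$ or $\eta_{4m-1}$.

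Next I would identify the $\T$-fixed loci. They are indexed by ``$\phi$-decorated trees'': a Kontsevich graph for $\ov{\mc{M}}_{2l}(\P^{4m-1},[d])$ together with an involution lying over $\phi$ and matching the conjugate pairs of marked points. Since $\phi$ has no fixed vertex, such an involution has a \emph{unique} fixed edge $e_0$; its two ends carry a conjugate pair of coordinate points, and the tree is two conjugate ``wings'' attached to the ends of $e_0$. Over $e_0$ lies a $\phi$-real degree-$d_0$ cover of the invariant line $\P^1_{(i)}\subset\P^{4m-1}$, and this is the \emph{only} place where the distinction between $\tau$ and $\eta$ enters, since $\P^1_{(i)}$ inherits the involution $\tau$ of $\P^1$ when $\phi=\tau_{4m-1}$ and the fixed-point-free involution $\eta$ when $\phi=\eta_{4m-1}$. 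In particular the sets of $\phi$-decorated trees, and all of their data away from $e_0$, are the same for the two involutions; by \eqref{degPn_e} applied to $\P^1_{(i)}$ the central degree $d_0$ must be odd when $\phi=\eta_{4m-1}$, which (the wings contributing an even total) recovers that $d$ is odd there.

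I would then compare the two localization sums graph by graph. Away from $e_0$ the vertex factors, the non-central edge factors, and the insertion factors $\ev_k^{*}H^{t_k}\mapsto\alpha_{p(k)}^{t_k}$ (with $\alpha_{p(k)}$ the weight of $\mc{O}(1)$ at the $\T$-fixed point carrying the $k$-th marked point) are literally identical; the only difference is the central factor, the contribution of the moduli of $\phi$-real degree-$d_0$ covers of $\P^1_{(i)}$ together with its virtual normal Euler class. I claim this central factor for $\eta_{4m-1}$ equals $-1$ times the one for $\tau_{4m-1}$, uniformly over all graphs: this is the same ``doubling'' discrepancy underlying Remark~\ref{evenness-rmk} and the orientation analysis behind Theorem~\ref{thm:main} (the $\tau$-line has a real locus and the $\eta$-line does not), and it flips exactly one sign in the determinant-line computation of the central component's contribution while leaving the insertion factors untouched. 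Summing over $\phi$-decorated trees then yields \eqref{numsPn_e}. For the vanishing: if $2\mid d$ then $\ov{\mc{M}}_l(\P^{4m-1},[d])^{\eta_{4m-1}}=\emptyset$ by \eqref{degPn_e}, so $N_d^{\eta_{4m-1}}=0$ and hence $N_d^{\tau_{4m-1}}=0$ by \eqref{numsPn_e}; if $2\mid t_k$ for some $k$, then exchanging the $k$-th ordered conjugate pair of marked points is an automorphism $\chi_k$ of $\ov{\mc{M}}_l(\P^{4m-1},[d])^{\phi}$ under which $\ev_k$ is replaced by $\phi\circ\ev_k$, hence $\ev_k^{*}H^{t_k}$ is multiplied by $(\phi^{*}H)^{t_k}=(-1)^{t_k}$, while $\chi_k$ reverses the canonical orientation by the computation of Section~\ref{orientation} (reflecting that $\dim_\C\P^{4m-1}$ is odd); thus $N_d^{\phi}(t_1,\dots,t_l)=(-1)^{t_k+1}N_d^{\phi}(t_1,\dots,t_l)$, which forces vanishing when $t_k$ is even.

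The main obstacle is the uniform sign $-1$ in the central factor --- equivalently, the orientation comparison at a $\T$-fixed real curve whose only real component is the central degree-$d_0$ cover. One must produce a local model for $\ov{\mc{M}}_l(\P^{4m-1},[d])^{\phi}$ near such a curve compatible both with the disk/$\eta$-curve descriptions of Sections~\ref{sec:open}--\ref{sec:real} and with the equivariant gluing of \cite{S} that removes the codimension-one boundary, then track how the canonical orientation decomposes as (orientation of the fixed locus) times (coherent orientation of the equivariant normal bundle), and how that decomposition changes when $\P^1_{(i)}$ carries $\tau$ rather than $\eta$. Showing that the resulting extra sign is $-1$ and is independent of the graph --- and that it is consistent with the $\chi_k$ argument above --- is where the real work lies; the remainder is bookkeeping standard in equivariant localization.
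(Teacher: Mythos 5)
Your approach matches the paper's: compute both invariants by equivariant localization on the $\T$-fixed loci, observe that the graphs indexing the fixed loci are the same away from the central edge~$e_0$, and track one sign. The central-edge contribution and the orientation bookkeeping behind that sign are precisely the content of Lemma~\ref{CenContr_lmm} together with Sections~\ref{orient_subs} and~\ref{signpf_subs}, which you rightly flag as ``where the real work lies'' but do not supply. The one genuine logical gap is your argument for the $2\mid d$ vanishing of $N_d^{\tau_{4m-1}}$: the graph-by-graph matching underpinning your~\eqref{numsPn_e} compares fixed loci in $\ov{\mc{M}}_l(\P^{4m-1},[d])^{\tau_{4m-1},\tau}$ with those in $\ov{\mc{M}}_l(\P^{4m-1},[d])^{\eta_{4m-1},\eta}$, which exist simultaneously only when $d$ is odd (you note this yourself when observing that the central degree $d_0$ must be odd on the $\eta_{4m-1}$ side). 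Thus your matching establishes~\eqref{numsPn_e} only for odd $d$, and invoking~\eqref{numsPn_e} to conclude $N_d^{\tau_{4m-1}}=0$ for even $d$ assumes what needs proof. The paper closes this by noting that for $d$ even the $\T$-fixed loci inside the \emph{two pieces} $\ov{\mc{M}}_l(\P^{4m-1},[d])^{\tau_{4m-1},\tau}$ and $\ov{\mc{M}}_l(\P^{4m-1},[d])^{\tau_{4m-1},\eta}$ of the single glued moduli space correspond bijectively and enter~\eqref{ABform_e} with opposite signs, so $N_d^{\tau_{4m-1}}=0$ by internal cancellation, independently of~\eqref{numsPn_e}.

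A secondary, conceptual point: the $-1$ in~\eqref{numsPn_e} does not arise as a discrepancy in the half-edge factor. Lemma~\ref{CenContr_lmm} shows the Euler class~\eqref{eContr2_e} of the central contribution, taken with respect to the canonical orientation of $\ov{\mc{M}}_0(\P^{4m-1},[d_0])^{\phi,c}$, is given by the \emph{same} formula for $c=\tau$ and $c=\eta$; the sign is entirely the global orientation flip on the $\eta$-piece of the glued space, recorded in~\eqref{cmeq_e} and built into the relative minus sign of~\eqref{ABform_e}. Finally, your $\chi_k$-automorphism argument for the vanishing when some $t_k$ is even is a valid repackaging of the paper's graph pairing $s_k\mapsto -s_k$ in~\eqref{ABform_e}, but the reason $\chi_k$ is orientation-reversing is that it switches the complex/anticomplex convention on the one-complex-dimensional factor $T_{z_k}\Si$ of the tangent space, not that $\dim_{\C}\P^{4m-1}$ is odd.
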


We prove this theorem in Section~\ref{applic_subs} using 
the equivariant localization theorem of~\cite{ABo}. 
While $(K_{\P^{4m+1}},\eta_{4m+1})$ does not admit a real square root
and $\Fix(\tau_{4m+1})$ does not admit a spin structure,
we show that  Theorem~\ref{equalGW} and its proof extend to~$\P^{4m+1}$ 
with the orientations on the moduli spaces explicitly constructed 
in Section~\ref{AGorient_subs}; see Remark~\ref{comporient_rmk}.

If $d$ is odd,
\begin{equation}\label{cmeq_e}\begin{split}
\ov{\mc{M}}_l(\P^{4m-1},d)^{\tau_{4m-1}}&=\ov{\mc{M}}_l(\P^{4m-1},d)^{\tau_{4m-1},\tau},\\
\ov{\mc{M}}_l(\P^{4m-1},d)^{\eta_{4m-1}}&=-\ov{\mc{M}}_l(\P^{4m-1},d)^{\eta_{4m-1},\eta},
\end{split}
\end{equation}
by the first statement in~(\ref{degPn_e}) and by~(\ref{etaPn_e}), respectively.
The sign in~(\ref{numsPn_e}) and~(\ref{cmeq_e}) occurs because we reverse the orientation of
 $\ov{\mc{M}}_l(X,A)^{\phi,\eta}$
when gluing it to $\ov{\mc{M}}_l(X,A)^{\phi,\tau}$ 
in order to make the glued moduli space oriented. 
In fact, the canonical square root and spin structure described in Section~\ref{orient_subs}
give the same orientation on $\Fix(\tau_{4m-1})$. 
Therefore, they are not compatible 
in the sense of Definition~\ref{dfn:comp-spin-sqroot} and one of the orientations has to be flipped.
As described in Sections~\ref{fixedloci_subs} and~\ref{NormBndl_subs}, 
the torus fixed loci in
$$\ov{\mc{M}}_l(\P^{4m-1},d)^{\tau_{4m-1},\tau}
\qquad\hbox{and}\qquad \ov{\mc{M}}_l(\P^{4m-1},d)^{\eta_{4m-1},\eta},$$ 
their normal bundles, and the corresponding restrictions of the cohomology classes being integrated
are the same; this confirms~(\ref{numsPn_e}) for $d$ odd. 

If $d$ is even, $N_d^{\eta_{4m-1}}(t_1,\ldots,t_l)=0$ by (\ref{etaPn_e}) and
the second statement in~(\ref{degPn_e}). 
On the other hand, in this case, the fixed loci~in 
\begin{equation}\label{mcMtau_e}
\ov{\mc{M}}_l(\P^{4m-1},d)^{\tau_{4m-1},\tau}
\quad\hbox{and}\quad 
\ov{\mc{M}}_l(\P^{4m-1},d)^{\tau_{4m-1},\eta},
\end{equation}
their normal bundles, and the corresponding restrictions of the cohomology classes being integrated
are the same.
Since the canonical orientation on the second space in~(\ref{mcMtau_e})
gets flipped when it is glued to the first, 
the contributions to $N_d^{\tau_{4m-1}}(t_1,\ldots,t_l)$ from the fixed loci cancel in pairs. 
This confirms (\ref{numsPn_e}) for $d$ even and establishes Theorem~\ref{equalGW} whenever~$2|d$. 

Whether $d$ is odd or even, if $2|t_k$, the contributions to $N_d^{\phi}(t_1,\ldots,t_l)$ 
from the fixed loci in $\ov{\mc{M}}_l(\P^{4m-1},d)^{\phi,c}$, for $c\!=\!\tau,\eta$ fixed, 
also cancel in pairs. 
This establishes the remaining vanishing statement of Theorem~\ref{equalGW}.

In Example~\ref{d1_eg}, we show~that
\begin{equation}\label{d1N_e1}
 N_1^{\tau_{4m-1}}(t_1,\ldots,t_l)=1
\end{equation}
whenever
\begin{equation}\label{d1N_e2}
t_1,\ldots,t_l\!\in\!\Z^+\!-\!2\Z \quad\hbox{and}\quad
t_1\!+\!\ldots\!+\!t_l=4m\!-\!2\!+\!l.
\end{equation}
In particular, the signed number of real lines passing through a single non-real point in 
$\P^{4m-1}$ with the standard conjugation is $+1$ 
with respect to the canonical spin structure of Section~\ref{orient_subs}. 
In Example~\ref{d3_eg}, we show~that 
\begin{equation}\label{d3N_e1}
N_3^{\tau_{4m-1}}(t_1,t_2,4m\!-\!1)=-1
\end{equation}
whenever
\begin{equation}\label{d3N_e2}
t_1,t_2\!\in\!\Z^+\!-\!2\Z, \quad t_1,t_2\!\ge\!3, \quad\hbox{and}\quad
t_1+t_2=4m\!+\!2.
\end{equation}
A similar computation shows that
$$N_5^{\tau_3}(3,3,3,3,3)=5.$$ 

\begin{remark}\label{P4m1_rmk}
All moduli spaces $\ov{\mc{M}}_l(\P^{2m-1},d)^{\phi,c}$ are given explicit orientations in
Section~\ref{AGorient_subs}.
In the $c\!=\!\tau$ case, the orientation turns out to come from a relative spin structure 
on~$\R\P^{2m-1}$ and Proposition~\ref{flip-orientation} still applies.
We show directly that so does Proposition~\ref{matching-orientation_prp}; 
see Proposition~\ref{algbnd_prp}.
Thus, we can also define the numbers $N_d^{\phi}(t_1,\ldots,t_l)$
as in~(\ref{numsPndfn_e})
using the algebraic orientations of Section~\ref{AGorient_subs}.
They can be computed using the equivariant localization data of 
Sections~\ref{fixedloci_subs} and~\ref{NormBndl_subs} with only minor changes; 
see Remark~\ref{comporient_rmk}.
The conclusions~of Theorem~\ref{equalGW} still apply.
The conclusions of~(\ref{d1N_e1}) and~(\ref{d3N_e1}) apply to the algebraic orientations 
on the moduli spaces for~$\P^{4m+1}$, 
which agree with the orientations by a canonical relative spin structure for $d$ odd;
see Remarks~\ref{comporient_rmk2a}, \ref{comporient_rmk2b}, and~\ref{comporient_rmk}.
\end{remark}

\subsection{Outline and acknowledgments}

In Section~\ref{ch:realcurves}, 
we investigate the boundary and orientation problems for  moduli spaces of real curves 
without fixed point and define $\eta$-invariants.
In Section~\ref{ch:mixed-invariants}, we combine the orientation problem of 
$\ov{\mc{M}}(X,A)^{\phi,\tau}$ and $\ov{\mc{M}}(X,A)^{\phi,\eta}$ 
and finish the proof of Theorem~\ref{thm:main}. 
Theorem~\ref{zeroness} is proved in Section~\ref{sec:zeroes}.
In Section~\ref{orientPn_sec}, we study the moduli spaces of 
real maps $\P^1\!\lra\!\P^{2m-1}$ in detail.
We provide equivariant localization data for them and 
establish Theorem~\ref{equalGW}, (\ref{d1N_e1}), and~(\ref{d3N_e1})
in Section~\ref{EquivLocal_sec}.

I would like to thank Professor G.~Tian, for his continuous encouragement and support 
and for sharing his inspiring insights, and
A.~Zinger, for his patience and help with the exposition of this paper. 
I am also grateful to P.~Georgieva and J.~Solomon for many helpful discussions. 
Finally, I would like to thank the referee for many valuable comments and suggestions.

\part{Construction of genus zero real GW invariants}
\section{Moduli spaces of real curves without fixed points\label{ch:realcurves}}

In this section, we study the moduli space of real curves of genus 0 without real points. 
As before, let 
\begin{equation}\label{equ:eta}
\eta,\tau\!:\P^1\to\P^1, \qquad \eta(z)=\frac{-1}{\bar{z}}, \quad \tau(z)=\frac1{\bar{z}}.
\end{equation}
Denote by $G_{\eta}$ the set of Mobius transformations (automorphisms of~$\P^1$),
$\rho(z)=\displaystyle{\frac{az+b}{cz+d}}$, commuting with~$\eta$.  
It acts freely and transitively on the sphere bundle $S(T\P^1)$ of $T\P^1$.
Since $S(T\P^1)\cong \R\P^3$, $G_{\eta}$ is a compact orientable Lie group.
Furthermore, the induced orientation on $S(T\P^1)$ as the boundary of the unit disk bundle $D(T\P^1)$ 
with its complex orientation, with the convention as in (\ref{equ:indori}), 
induces a canonical orientation on~$G_{\eta}$. 
For this orientation on $G_\eta$, $v_1,v_2,v_3\in T_{\tn{id}} G_\eta$, where 
$$
\aligned
&v_1+\mfi v_2= \frac{\nd}{\nd a}\!\mid_{a=0}\frac{z+a}{1-\ov{a}z},\\
& v_3=\frac{\nd}{\nd \theta}\mid_{\theta=0} \tn{e}^{\mfi \theta}z,
\endaligned
$$
is an oriented basis.

Similarly, let $G_{\tau}$ be the set of Mobius transformations commuting with~$\tau$.  
The automorphism group $G_\tau$ has two connected components, $G_\tau^0$ containing the identity and $\rho\cdot G_\tau^0$ where $\rho(z)=z^{-1}$. 
The former is the automorphism group of a disk and the latter switches the two disk components of $\P^1\setminus \Fix(\tau)$.
Fixing one of the disk components $D$ as the reference disk, $G_\tau^0$ acts freely and transitively on the sphere bundle $S(TD)$ of $TD$.
Since $S(TD)\cong D\times S^1$, $G_{\tau}^0$ inherits an induced orientation.
Let $D$ be the choice containing $z\!=\!0\! \in \P^1$, then $u_1,u_2,u_3\in T_{\tn{id}} G_\tau^0$, where 
$$
\aligned
&u_1+\mfi u_2= \frac{\nd}{\nd a}\!\mid_{a=0}\frac{z+a}{1+\ov{a}z},\\
& u_3=\frac{\nd}{\nd \theta}\mid_{\theta=0} \tn{e}^{\mfi \theta}z,
\endaligned
$$
is an oriented basis.
We use these conventions in orienting the corresponding moduli spaces 
and in the proof of Theorem~\ref{square-root-orientation-thm} and Theorem~\ref{thm:main}.
  
The involution $\phi$ on $X$ induces an involution $\tilde\phi$ on the moduli space
$\mc{M}_{2l}(X,A)$ of all degree~$A$ $2l$-marked somewhere injective $J$-holomorphic spheres:
$$\tilde\phi\big([u,z_1,z_2,\ldots,z_{2l-1},z_{2l}]\big)
=[\phi\circ u\circ\eta,\eta(z_2),\eta(z_1),\ldots,\eta(z_{2n}),\eta(z_{2l-1})].$$
For every $J$-holomorphic sphere $u\!:\P^1\to X$ in the fixed point locus of $\tilde\phi$, there exists at most one anti-holomorphic involution $\eta_{u}$ such that $\Fix(\eta_{u})=\emptyset$ and 
$u=\phi\circ u\circ\eta_{u}$; therefore, the fixed point locus of $\tilde\phi$ contains $\mc{M}_l(X,A)^{\phi,\eta}$. Intuitively, $\mc{M}_l(X,A)^{\phi,\eta}$ has half the dimension of $\mc{M}_{2l}(X,A)$.

\begin{remark}
If $X\cong \P^1$, $\phi=\tau$, $A=[1]\in H_2(\P^1)\cong \Z$, and $l=0$, $\mc{M}_0(\P^1,[1])$ is just one point on which $\tilde\phi$ acts as identity while $\mc{M}_0(\P^1,[1])^{\tau,\eta}$ is empty; therefore, $\Fix(\tilde\phi)\neq \mc{M}_0(\P^1,[1])^{\tau,\eta}$. 
\end{remark}

Let $\ov{\mc{M}}_l(X,A)^{\phi,\eta}$ denote the stable map compactification of $\mc{M}_l(X,A)^{\phi,\eta}$. 
This is a closed subset of $\ov{\mc{M}}_{2l}(X,A)$ consisting of maps $[u,\Si,z_1,\ldots,z_{2l}]$
with the property that there exists an anti-holomorphic involution $\eta_u$ 
on the domain $\Si$ of $u$ such that 
$$|\Fix(\eta_u)|\le1, \qquad u=\phi\circ u\circ\eta_u,\qquad
\eta_u(z_2)=z_1,~\ldots,~\eta_u(z_{2l})=z_{2l-1}.$$
Thus, there are two possible cases for $\eta_u\!:\Si\to\Si$:
\begin{enumerate}
\item $\Si=\Si_0\cup\bigcup_i(\Si_i\sqcup\Si_{\bar{i}})$, $\eta_u\!:\Si_0\to\Si_0$ is an anti-holomorphic involution without fixed points, and $\eta_u\!:\Si_i\to\Si_{\bar{i}}$ is an anti-holomorphic map
with inverse \hbox{$\eta_u\!:\Si_{\bar{i}}\to\Si_i$};
\item $\Si=\bigcup_i(\Si_i\cup\Si_{\bar{i}})$, $\eta_u\!:\Si_i\to\Si_{\bar{i}}$ is an 
anti-holomorphic map with inverse \hbox{$\eta_u\!:\Si_{\bar{i}}\to\Si_i$}.
\end{enumerate}
In the second case, $\eta_u$ fixes a node of $\Si$,
which must be mapped by $u$ to $\Fix(\phi)$; $\ov{\mc{M}}_l(X,A)$ contains no such
elements if $\Fix(\phi)=\emptyset$.

The virtual codimension of a boundary stratum of $\ov{\mc{M}}_l(X,A)^{\phi,\eta}$ is 
the number of nodes in the domains of the elements of the stratum.
If $\Fix(\phi)=\emptyset$, $\ov{\mc{M}}_l(X,A)^{\phi,\eta}$ contains no elements of the second type above,
and so its boundary strata have codimension at least two.
Thus, $\ov{\mc{M}}_l(X,A)^{\phi,\eta}$ is a moduli space without (virtual) codimension one boundary
if $\Fix(\phi)=\emptyset$, and there is a hope of defining GW-type invariants directly from $\ov{\mc{M}}_l(X,A)^{\phi,\eta}$.

We study the orientation problem for $\ov{\mc{M}}_l(X,A)^{\phi,\eta}$ in Section~\ref{orientation} and describe a Kuranishi structure in Section~\ref{kuranishi-structure}.

\subsection{Orientation}\label{orientation}

Let $c\!=\!\tau,\eta$. 
In the orientation problem for $\mc{M}_l(X,A)^{\phi,c}$, 
it is sufficient to consider the case $l=0$ because any pair of marked points 
$(z_i,\ov{z_i})$ increases the tangent space by $T_{z_i}\P^1$, which has a canonical orientation. 
Denote by $\mc{P}_0(X,A)^{\phi,c}$  the space of (parametrized) degree~$A$ $J$-holomorphic maps
$u\!:\P^1\!\ra\!X$  such that $\phi\!\circ\!u\!=\!u\!\circ\!c$.
The group~$G_c$ acts~on this space~by
$$G_c\times \mc{P}_0(X,A)^{\phi,c}\lra \mc{P}_0(X,A)^{\phi,c},
\qquad g\cdot u=u\circ g^{-1}\,.$$
By definition,
$$\mc{M}_0(X,A)^{\phi,c} = \mc{P}_0(X,A)^{\phi,c} /G_c .$$
For example, $\mc{P}_0(\P^1,1)^{c,c}\!=\!G_c$ and $\mc{M}_0(\P^1,1)^{c,c}$ consists of 
a single point.
The next observation is used in Section~\ref{orientPn_sec}.

\begin{lemma}\label{ptorient_lmm}
Let $c\!=\!\tau,\eta$. 
If $\mc{P}_0(\P^1,1)^{c,c}\!=\!G_c$ is oriented with the canonical orientation of~$G_c$
as at the beginning of Section~\ref{ch:realcurves}, 
then $\mc{M}_0(\P^1,1)^{c,c}$  is a single negative point. 
\end{lemma}

\begin{proof}
The group action in this case is given~by
$$G_c\times G_c\lra G_c, \qquad g\cdot h\lra h\circ g^{-1}\,,$$
The claim is thus equivalent to the statement that the differential of the map
$$G_c\lra G_c, \qquad g\lra g^{-1},$$
is orientation-reversing at the identity. 
This differential is the multiplication by $-1$.
Since the dimension of~$G_c$ is odd, it is orientation-reversing.
\end{proof}

The orientation problem for $\mc{M}_l(X,A)^{\phi,\tau}$ has a long history.
Below we focus on the orientation problem for $\mc{M}_l(X,A)^{\phi,\eta}$.
In contrast to the group~$G_{\tau}$, the group $G_{\eta}$ is connected.
In order to put an orientation on $\mc{M}_0(X,A)^{\phi,\eta}$, 
it is thus enough to orient $\mc{P}_0(X,A)^{\phi,\eta}$. 
For this, we need to orient the determinant of the index bundle
$$ \tn{det}_\R(E)\equiv \La^{\top}H^0(E)_{\R} \otimes \La^{\top} (H^1(E)_{\R})^*, $$
where $E= u^*TX$ and $H^0(E)_{\R}$ and $H^1(E)_{\R}$ are the real elements of the kernel and cokernel of a Cauchy-Riemann operator on $E$. 
Recall that $E$ admits an anti-complex linear involution $T_{\phi}$; see the left diagram in (\ref{diag:E}).

\begin{definition}\label{admissible}
Let $E\to \P^1$ be a complex vector bundle with a real structure $\phi$ covering $\eta$.
We call a trivialization of $E$ over $\C^*\subset \P^1$,
$$\xymatrix{
& E \ar[d]_{\pi} \ar[r]^{\psi} & \C^*\times \C^m \ar[d]_{\pi} \\
& \C^* \ar[r]^{\id} & \C^*}$$
\textsf{admissible} if the involution $\phi_{\psi}(z) = \psi_{\eta(z)} \circ \phi \circ \psi_{z}^{-1}$ coincides with the standard involution $C\colon (z,v) \ra (\eta(z), \bar{v})$. Admissible trivializations $\psi$ and $\psi'$ of $(E,\phi)$ over $\C^*$ are called \textsf{homotopic} if there is a family of such trivializations $\psi_t$, $t\in [0,1]$, such that $\psi_0=\psi$ and $\psi_1=\psi'$.
\end{definition}

\begin{lemma}\label{homotopy-classes}
For every complex vector bundle $E\to \P^1$ with a real structure $\phi$ covering $\eta$, there are two homotopy classes of admissible trivializations over $\C^*\subset \P^1$. Moreover, for every admissible trivialization $\psi$ and every map 
$$ R_{(\ep_i)}\colon \C^* \times \C^m \to \C^* \times \C^m ,\quad R_{(\ep_i)}(z,v)=(\ep_1 v_1 ,\cdots,\ep_m v_m), \quad \ep_{i}=\pm1 ,$$
$R_{(\ep_i)}\circ \psi$ is another admissible trivialization which is in the same homotopy class as $\psi$ if and only if $\prod \ep_i = 1$.
\end{lemma}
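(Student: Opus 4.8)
The plan is to fix the topology of the situation first. The bundle $E\to\P^1$ is a complex vector bundle of rank $m$, so it splits as a direct sum of line bundles; restricted to $\C^*\subset\P^1$ it is trivializable, and the space of all (complex) trivializations over $\C^*$ is a torsor over the space of maps $\C^*\to GL_m(\C)$, which is homotopy equivalent to $GL_m(\C)$, hence connected with $\pi_1=\Z$. The admissibility condition $\phi_\psi=C$ cuts this down: two admissible trivializations differ by a map $g\colon\C^*\to GL_m(\C)$ that intertwines $C$ with itself, i.e.\ $g(\eta(z))=\overline{g(z)}$. So the first step is to identify the space $\mc{G}$ of such equivariant maps $\C^*\to GL_m(\C)$ and compute its set of connected components; the claim is that $\pi_0(\mc{G})\cong\Z_2$, realized by the determinant-of-the-restriction-to-a-fiber invariant reduced mod something, with the explicit representative $R_{(\ep_i)}$ detected by $\prod\ep_i$.

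The key steps, in order. (1) Reduce to understanding $\mc{G}=\{g\colon\C^*\to GL_m(\C):\ g(\eta(z))=\overline{g(z)}\}$ and show it is nonempty (the identity-type trivialization from Definition~\ref{admissible} exists because $E$ has a real structure; this is essentially the content that makes admissible trivializations exist at all, which I would either cite or check by the line-bundle splitting combined with Remark~\ref{evenness-rmk}'s parity observation). (2) Note that $\eta$ has no fixed points on $\C^*$ and that $\C^*/\eta$ is an annulus (or Möbius-type quotient — in fact $\eta(z)=-1/\bar z$ acts freely on $\C^*$ with quotient an open Möbius band), so giving $g\in\mc{G}$ is the same as giving a section of an associated $GL_m(\C)$-bundle over this quotient; $\pi_0$ of the section space is then computed from $\pi_0(GL_m(\C))=\ast$ and $\pi_1(GL_m(\C))=\Z$ together with the nontrivial way the real structure twists the loop — the orientation-reversing identification on the boundary circle of the Möbius band sends a loop class $n\in\Z$ to $-n$, so the monodromy acts on $\pi_1$ by negation and $\pi_0$ of sections becomes $\Z/(n\sim -n)$ detected only by $n\bmod 2$, giving exactly two classes. (3) Compute the class of $R_{(\ep_i)}$: as a constant diagonal map $\C^*\to GL_m(\C)$ it represents the homotopy class determined by whether the constant matrix $\mathrm{diag}(\ep_i)$ lies in the identity component of the real points — but $\mathrm{diag}(\ep_i)$ has determinant $\prod\ep_i=\pm1$, and the real-equivariant deformation connecting it to the identity exists precisely when $\prod\ep_i=1$ (one can pair up two $-1$'s and rotate the corresponding $\C^2$ block by $\mathrm{diag}(e^{i\theta},e^{-i\theta})$ through the family, which stays equivariant since $\eta$-equivariance here just conjugates; an odd number of $-1$'s obstructs this by the mod-$2$ determinant/loop invariant). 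This gives the "if and only if $\prod\ep_i=1$" and simultaneously shows both classes are hit, so there are exactly two.

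The main obstacle I expect is step (2): pinning down precisely how the real structure twists the fundamental group of the structure group over the Möbius-band quotient, i.e.\ getting the sign in "monodromy acts by $n\mapsto -n$" correct and confirming it collapses $\Z$ to $\Z_2$ rather than, say, killing it entirely or leaving it as $\Z$. Concretely this is the statement that a generator of $\pi_1(GL_m(\C))$ (a loop of the form $z\mapsto\mathrm{diag}(z,1,\dots,1)$ around the annulus) is, after imposing the $\eta$-equivariance gluing, homotopic through admissible trivializations to its inverse but not to the constant — equivalently, the loop $R_{(-1,-1,1,\dots,1)}$ bounds while $R_{(-1,1,\dots,1)}$ does not. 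I would verify this by an explicit homotopy in the rank-one and rank-two cases (which is the only place a real computation is needed) and then reduce the general rank to these by the splitting/block-diagonal argument. Once that sign is nailed down, everything else is bookkeeping with $\pi_0$ and $\pi_1$ of $GL_m(\C)$.
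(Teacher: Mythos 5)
Your overall strategy matches the paper's: reduce the question to the set of equivariant maps $\mc{G}=\{g\colon\C^*\to\tn{GL}(m,\C):g(\eta(z))=\overline{g(z)}\}$ and compute $\pi_0(\mc{G})$ using $\pi_1(\tn{GL}(m,\C))\cong\Z$. Your M\"obius-band picture is a pleasant geometric re-packaging of the paper's restriction to the upper semicircle (a path $\gamma$ with $\gamma(0)=\overline{\gamma(1)}$ is exactly a section over the core circle of $\C^*/\eta$). However, there are two genuine errors in your execution of this strategy.

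First, your step (2) is wrong as stated. You claim that because the monodromy acts on $\pi_1(\tn{GL}(m,\C))\cong\Z$ by $n\mapsto -n$, the set of homotopy classes of sections is ``$\Z/(n\sim -n)$ detected only by $n\bmod 2$.'' But $\Z/(n\sim -n)$ is the orbit set $\{0,1,2,\ldots\}$, which is infinite and certainly not detected by $n\bmod 2$. The correct statement is that homotopy classes of sections of a fibration over $S^1$ with connected fiber $F$ and monodromy $\mu$ are in bijection with \emph{twisted conjugacy} classes $\pi_1(F)/\bigl(\alpha\sim\gamma\cdot\alpha\cdot\mu(\gamma)^{-1}\bigr)$. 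With $\pi_1(F)=\Z$ abelian and $\mu=-\id$ this gives $\alpha\sim\alpha+2\gamma$, so the quotient is the group $\Z/2\Z$. This is exactly what the paper gets from its fibration $G\to\tn{GL}(m,\C)$, $\gamma\mapsto\gamma(0)$, where the boundary map $\pi_1(\tn{GL}(m,\C))\to\pi_0(G_0)\cong\Z$ is multiplication by $\pm 2$; the image $2\Z$ is what gives $\Z/2\Z$, not the orbit relation $n\sim -n$. Your final count of two classes is right, but the mechanism you invoke to get it is not.

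Second, the homotopy you propose in step (3) is not equivariant. A \emph{constant} map $g(z)\equiv M$ satisfies $g(\eta(z))=\overline{g(z)}$ if and only if $M=\overline{M}$, i.e.\ $M$ is real. Your family $\tn{diag}(e^{\mf{i}\theta},e^{-\mf{i}\theta})$ is not real for $\theta\notin\pi\Z$, so those constant maps do not lie in $\mc{G}$, and the proposed homotopy fails. The fix is easy: use the real $SO(2)$ rotation
$$R(\theta)=\begin{pmatrix}\cos\theta & -\sin\theta\\ \sin\theta & \cos\theta\end{pmatrix}\in\tn{GL}(2,\R),$$
which is conjugate over $\C$ to $\tn{diag}(e^{\mf{i}\theta},e^{-\mf{i}\theta})$ but has real entries, so the constant family $g_s\equiv R(\pi s)$ stays in $\mc{G}$ and connects $I$ to $-I$ on a $2\times 2$ block. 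With this repair, pairing up $(-1)$'s works, and the obstruction to cancelling a single $-1$ is exactly the mod-$2$ winding number of $\det$, as you indicate. Finally, note that step (1) (existence of an admissible trivialization) is not just a parity observation to be cited: the paper proves it by an explicit construction, writing an arbitrary trivialization as $B_\psi=C\circ\phi_\psi$, observing $\overline{B_\psi(\eta(z))}B_\psi(z)=\I_m$, and solving $\overline{A(\eta(z))}B_\psi(z)A(z)^{-1}=\I_m$ by choosing $A$ freely on the closed upper half-plane and extending by the conjugation relation. Your sketch omits this step entirely, so as written there is a gap before your homotopy count even gets started.
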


\begin{proof}
(1) As a complex vector bundle, $E$ is trivial over $\C^*$. Therefore, we can fix a trivialization $\psi\colon E \to \C^* \times \C^m$. The involution $\phi$ then corresponds to a map
$$\phi_{\psi}\colon \C^* \to \tn{GL}(2m,\R) $$
whose image lies in the set of anti-complex linear matrices. 
In order to obtain an admissible trivialization, we find a change of trivialization matrix 
\begin{equation}\label{previous1}
A\colon \C^* \to \tn{GL}(m,\C)\qquad \hbox{s.t.}\quad A_{\eta(z)} \circ \phi_{\psi} \circ A_{z}^{-1} = C. 
\end{equation}

Let $B_{\psi}(z)= C \circ \phi_{\psi}(z) \in \tn{GL}(m,\C)$. Since $\phi_{\psi}$ is an involution, $\ov{B_{\psi}(\eta(z))}B_{\psi}(z)=\I_{m}$. Composing on the left by $C$, we can rewrite (\ref{previous1}) as
\begin{equation}\label{changeoftrivialization}
 \ov{A_{\eta(z)}} \circ B_{\psi} \circ A_{z}^{-1} = \I_{m}. 
 \end{equation}
Let $\alpha\colon\H\setminus \{0\} \to \tn{GL}(m,\C)$, where $\H$ is the closed upper half-plane, be a family of matrices such that
$$\alpha(r)=
\left\{
	\begin{array}{ll}
		 \I_m   & \mbox{if } r \in \R^+; \\
	   \ov{B_{\psi}(\eta(r))} & \mbox{if } r\in \R^-.
	\end{array}
	\right.$$
Next define 
$$ A(z) =
\left\{
	\begin{array}{ll}
		\alpha(z) B_{\psi}(z)  & \mbox{if } z \in \H\setminus \{0\}; \\
		\ov{\alpha(\eta(z))} & \mbox{if } z \in \ov{\H}\setminus\{0\}.
	\end{array}
	\right.$$ 
It is easy to check that $A$ is continuous and satisfies (\ref{changeoftrivialization}).\\

(2) If $\psi$ is an admissible trivialization, 
any other admissible trivialization is of the form $\rho\circ \psi$, where
\begin{equation}\label{change-of-trivialization}
\rho\colon\C^*\to\tn{GL}(m,\C) \qquad\hbox{and}\quad \ov{\rho(\eta(z))} \rho(z)^{-1}=\I_m.
\end{equation}
The question is whether $\rho$ is homotopic to identity through a 
family $\rho_t$ of matrices satisfying the same equation as (\ref{change-of-trivialization}). 

Let
$$G= \left\{\gamma\colon[0,1]\to \tn{GL}(m,\C) \mid \gamma(0) = \ov{\gamma(1)} \right\}, \quad G_0= \{\gamma \in G : \gamma(0)=\I_m\};$$
the set $G$ is a group under point-wise multiplication, while $G_0$ is its subgroup. 
The restriction of $\rho$ to the upper semi-circle, $\left\{z=e^{i\pi t} \;\mid t \in [0,1]\right\}$, determines an element of $G$. 
In fact, the space of $\rho$ satisfying (\ref{change-of-trivialization}) is homotopic to $G$. 
The map
\begin{equation}\label{pi-projection}
 \pi\colon G \to \tn{GL}(m,\C),\quad \gamma \to \gamma(0), 
 \end{equation}
is a fiber bundle with fiber $G_0$. 
From the associated long exact sequence,
$$ \cdots \ra \pi_1(\tn{GL}(m,\C)) \ra \pi_0(G_0) \ra \pi_0(G) \ra \pi_0(\tn{GL}(m,\C)) \ra 0, $$
we conclude that $\pi_0(G) = \Z/2\Z$.
In fact, the connecting homomorphism 
$$
\pi_1(\tn{GL}(m,\C))\ra \pi_0(G_0)\cong\pi_1(\tn{GL}(m,\C))\cong\Z
$$
is multiplication by $2$ for the following reason. 

\vskip.1in
We start from the loop $\gamma:[0,1]\to \tn{GL}(m,\C)$ given by
$$
\gamma(s)=\left( \begin{array}{cccc}
e^{2\pi\mathfrak{i}s} & 0 & 0 & 0 \\
0 & 1 & 0 & 0 \\
0 & 0 & \ddots &0 \\
0 & 0 & 0 & 1  \end{array} \right).
$$
This loop generates $\pi_1(\tn{GL}(m,\C))$. 
With the projection map $\pi$ as in (\ref{pi-projection}),  
the restricted $S^1$-family $\pi^{-1}(\gamma)\subset G$ is a non-trivial $G_0$-bundle. 
For every $s\!\in\![0,1]$, let $\alpha_s\!\in\!\pi^{-1}(\gamma(s))\subset G$ be the path of matrices
$$
\alpha_s(t)=\left( \begin{array}{cccc}
e^{2\pi\mathfrak{i} (s(1-2t)) } & 0 & 0 & 0 \\
0 & 1 & 0 & 0 \\
0 & 0 & \ddots &0 \\
0 & 0 & 0 & 1  \end{array} \right),  \quad t\in[0,1].
$$
Note that $\alpha_0\equiv \tn{id}$ and $\alpha_1\cong \gamma^{-2}$.
Then 
\begin{equation}\label{identification}
\pi^{-1}(\gamma(s))= \alpha_s \cdot G_0 =\{ \alpha_s \delta\mid \delta \in G_0\}.
\end{equation}
Moving along the family of identifications (\ref{identification}) over $[0,1]$, 
we find that the holonomy map of $\pi^{-1}(\gamma)$ is isomorphic to
$$
h\colon G_0 \to G_0, \quad h(\de)= \alpha_1^{-2} \de =\gamma^2 \de.
$$
In other words,
\begin{equation}\label{holonomy}
\pi^{-1}(\gamma)\cong G_0\times [0,1]/ \de\times \{0\}\sim (\gamma^2\cdot \de)\times \{1\}.
\end{equation}
This implies that the connecting homomorphism takes $\gamma\in \pi_1(\tn{GL}(m,\Z))$ to 
$$\gamma^{2}\in \pi_0(G_0)\cong \pi_1(\tn{GL}(m,\Z)).$$ 

The remaining claim of the lemma is checked by chasing the maps in the long exact sequence. 
For $\rho=R_{(\ep_i)}$, the corresponding path $\gamma$ in $G$ is 
the constant path $\gamma(t)\equiv \textnormal{diag}(\ep_i)$. 
Inside $G$, via the path 
$$
\gamma_s(t)=\tn{diag}(\tn{e}^{(1-\ep_i)\pi \mfi f_s(t)}), \quad t\in[0,1],~s\in[0,1],
$$
where $f_s(t)= -st+\frac{1+s}{2}$, we can deform $\gamma=\gamma_0$ to the path $\gamma_1\!\in\!G_0$
given~by
$$\gamma_1(t)=\tn{diag}(\tn{e}^{(1-\ep_i)\pi \mfi (1-t)}).$$ 
Then $[\gamma_1]\in \pi_0(G_0)=\pi_1(G_1)\cong \Z$ is equal to $n_\gamma=-|\{i: \ep_i=-1\}|$. 
Since the homomorphism
$$\pi_1(\tn{GL}(m,\C))\ra \pi_0(G_0)\cong\pi_1(\tn{GL}(m,\C))\cong\Z$$ 
is multiplication by $-2$, $\gamma\in\pi_0(G)$ is trivial if and only if 
$2|n_\gamma$, i.e. if and only if $\prod \ep_i =1$.
\end{proof}

\begin{lemma}\label{trivialization-to-orientation}
Let $E\to \P^1$ be a complex vector bundle with a real structure $\phi$ lifting~$\eta$. Every admissible trivialization of $(E,\phi)$ over $\C^* \subset \P^1$ canonically determines  an orientation of  $ \La^{\top} H^0(E)_{\R} \otimes \La^{\top} (H^1(E)_{\R})^* $. The two orientations given by two different admissible trivializations coincide if and only if they are in the same homotopy class. 
\end{lemma}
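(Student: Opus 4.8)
The plan is to build the orientation from an admissible trivialization by a standard deformation-to-the-split-model argument, following the technique used for disks (e.g.\ in \cite{FOOO}, \cite{G}), but adapted to the fixed-point-free involution $\eta$. First I would fix an admissible trivialization $\psi\colon E|_{\C^*}\to\C^*\times\C^m$ and observe that, because $\phi_\psi=C$ on $\C^*$, the bundle $(E,\phi)$ decomposes over $\C^*$ as $m$ copies of the pair $(\underline{\C},C)$. I would then use $\psi$ to glue $E$ from two trivial pieces over the two hemispheres, so that $(E,\phi)$ is described by a single clutching function $g\colon S^1\to\tn{GL}(m,\C)$ compatible with $\eta$; the isomorphism class of the resulting real Cauchy-Riemann problem depends only on $g$ up to the equivalence appearing in Lemma~\ref{homotopy-classes}, hence only on the homotopy class of $\psi$.

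Next I would deform, within the space of real Cauchy-Riemann operators on $(E,\phi)$, to a \emph{split} model: a direct sum $E\cong\bigoplus_{i=1}^m (L_i,\phi_i)$ of real line bundles, where the clutching data of each $L_i$ is a power of the standard one. Since the index bundle's determinant line $\La^{\top}H^0(E)_\R\otimes\La^{\top}(H^1(E)_\R)^*$ varies continuously (and the real dimension of $H^0-H^1$ is constant) along such a deformation, an orientation of the split model transports canonically to $(E,\phi,\psi)$. For the split model I would orient each summand individually: by Remark~\ref{evenness-rmk}, each $L_i$ with a lift of $\eta$ has even degree, so $H^0(L_i\otimes\cdots)$ and $H^1(L_i\otimes\cdots)$ carry an action of $\eta$ whose $\pm1$ eigenspaces are interchanged by $J$; the complex structure on either eigenspace then gives a canonical orientation of the real determinant line. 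This is precisely where the admissibility of $\psi$ is used: it pins down which identification of $E|_{\C^*}$ with the standard split model is being used, and therefore which of the two possible complex orientations of the eigenspaces is selected.

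Then I would check the dependence on $\psi$. If $\psi'$ is homotopic to $\psi$, the deformation can be carried along continuously and the orientations agree; this gives the ``if'' direction. For the ``only if'' direction I would invoke the second part of Lemma~\ref{homotopy-classes}: any two admissible trivializations differ (up to homotopy) by a map $R_{(\ep_i)}$ with $\prod\ep_i=-1$ when they are in different classes, and such an $R_{(\ep_i)}$ changes the chosen complex orientation on exactly an odd number of line-bundle summands of the split model, hence reverses the induced orientation of the determinant line. Equivalently, one can compute that crossing the nontrivial element of $\pi_0$ of the space of admissible trivializations — which by the footnote to Lemma~\ref{homotopy-classes} corresponds to a ``$-2$''-type twist — acts by $(-1)$ on the determinant line.

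The main obstacle I expect is the deformation-to-split step done \emph{equivariantly and continuously in the orientation data}: one must know that the real index line bundle over the space of real CR operators on $(E,\phi)$ is well-behaved (no jumps in parity, a genuine line bundle whose restriction to the split locus is as described), and that the orientation produced on the split model is independent of the chosen splitting $E\cong\bigoplus L_i$ and of the chosen ordering of the summands. The ordering ambiguity is harmless because permuting two even-degree line summands permutes two copies of a complex (hence canonically oriented) vector space and contributes an even number of sign changes; the splitting ambiguity is handled by connectedness of the relevant space of admissible splittings. Keeping careful track of these sign bookkeeping issues, rather than any deep analysis, is the real content here.
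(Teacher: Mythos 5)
Your overall scheme — stabilize/deform the CR problem to a simpler model, orient the model, transport back, and then use the second half of Lemma~\ref{homotopy-classes} for the ``only if'' direction — is a reasonable plan, and it differs from the paper's argument, which pinches the domain $\P^1$ along the two circles $C_{0,r}$ and $C_{\infty,r}$ to get a three-component nodal curve $\Si_{\top}\cup\Si_0\cup\Si_{\bot}$ and then reads off the orientation from a short exact sequence of the resulting index pieces. However, the step on which your whole construction rests does not actually work as stated, and it is exactly the step where the admissibility of the trivialization must enter.

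You claim that, for each real line-bundle summand $L_i$, the $\pm1$ eigenspace decomposition from Remark~\ref{evenness-rmk} --- $H^0(L_i)=H^0_+\oplus H^0_-$ with $J\cdot H^0_+=H^0_-$ --- allows ``the complex structure on either eigenspace'' to orient the real determinant line. But neither eigenspace is $J$-invariant, so neither carries a complex structure, and a real vector space $V_+$ equipped only with the data of a complex vector space $V=V_+\otimes_{\R}\C$ has no canonical orientation: the complex orientation of $V$ orients $\La^{\top}_{\R}V_+\otimes\La^{\top}_{\R}V_+$, which is always positive and therefore orientation-blind. Concretely, $H^0(\mc{O}_{\P^1}(2))_{\R}$ for a real structure covering $\eta$ is $3$-real-dimensional and acquires no preferred orientation from $(J,\tilde\eta)$ alone. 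Your appeal to admissibility of $\psi$ to ``pin down which of the two possible complex orientations of the eigenspaces is selected'' does not repair this, both because there is only one complex orientation of a complex vector space, and because you never explain how a trivialization over $\C^*$ transfers to an orientation of a space of global sections on $\P^1$. That transfer is precisely the content of the paper's argument: after pinching, the middle component $\Si_0$ carries the \emph{trivial} bundle $\P^1\times\C^m$ with the \emph{standard} conjugation $(z,v)\mapsto(\eta(z),\bar v)$, for which the real sections are literally $\R^m\subset\C^m$ and the trivialization gives an ordered real basis, hence an orientation; the outer components and the matching condition at the nodes are complex and canonically oriented, and the short exact sequence
$$0\to W^{1,p}(\tilde E)_{\R}\to W^{1,p}(\tilde E|_{\Si_{\top}})\oplus W^{1,p}(\tilde E|_{\Si_0})_{\R}\to\C^m_q\to 0$$
glues these into an orientation of the index. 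If you replace your eigenspace step with a pinching argument of this type applied to each $L_i$, your plan goes through, but at that point it is the paper's argument carried out summand-by-summand rather than a genuinely independent proof; the reduction to line-bundle summands also adds a well-definedness burden (independence of the splitting and of the ordering) that the paper simply avoids by working with the rank-$m$ bundle directly.

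A secondary, smaller issue: in the ``only if'' direction you write that any two admissible trivializations differ up to homotopy by some $R_{(\ep_i)}$, which is true only because $\pi_0$ of the space of admissible trivializations is $\Z/2$ (Lemma~\ref{homotopy-classes}); it is worth making this explicit rather than asserting it, since on its face it is a stronger statement than Lemma~\ref{homotopy-classes} gives.
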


\begin{proof}
The proof is analogous to that of \cite[Proposition 8.1.4]{FOOO}. 
Contracting each of the two circles 
$$
C_{0,r}=\left\{z\in \C^* \mid \left|z\right|=r\right\}\quad \tn{and}  \quad C_{\infty,r}= \left\{z\in \C^* \mid \left|z\right|=\frac{1}{r}\right\}
$$
to a point, we obtain a nodal curve $\Si= \Si_{\top} \cup \Si_0 \cup \Si_{\down}$ (Figure~\ref{Fig:pinching}) with an induced fixed point free involution $\eta_{\Si}$. We denote the quotient map by $ \pi\colon \P^1 \to \Si$. Denote by $q$ and $\eta_{\Si}(q)$ the nodal points of $\Si$. We may assume that $q$ and $\eta_{\Si}(q)$ are respectively $0$ and $\infty$ in $\Si_0 \cong \P^1$.

\begin{figure}[h]
\begin{pspicture}(-3,-1.5)(11,2.4)
\psset{unit=.3cm}
\pscircle*(16,0){.3}\rput(16,-2){$q$}\pscircle*(8,0){.3}
\pscircle[linewidth=0.07](12,0){4}\psellipse[linewidth=0.07](12,0)(1,4)\rput(12,-5){$\Si_0$}
\pscircle[linewidth=0.07](20,0){4}\psellipse[linewidth=0.07](20,0)(1,4)\rput(20,-5){$\Si_\top$}
\pscircle[linewidth=0.07](4,0){4}\psellipse[linewidth=0.07](4,0)(1,4)\rput(4,-5){$\Si_\bot$}
\psline[linewidth=0.07]{<->}(7,5)(17,5)\rput(12,6){conjugation}

\end{pspicture}
\caption{Nodal curve $\Si$ obtained by pinching $C_{0,r}$ and $C_{\infty,r}$}\label{Fig:pinching}
\end{figure}
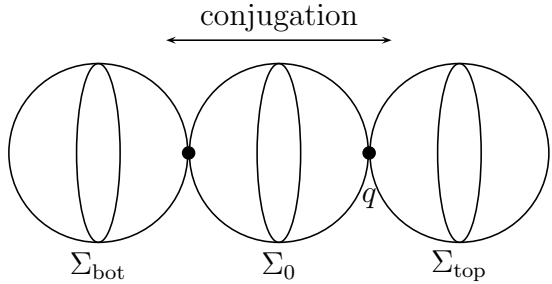

Via the given trivialization, the bundle $(E,\phi)$ descends to a bundle $(\tilde{E}, \tilde{\phi})$ over $\Si$ so that
$$ \tilde{E}\mid_{\Si_0} \cong \P^1 \times \C^m$$
and the involution $\tilde{\phi}\!\mid_{\Si_0}$ sends $(z,v)$ to $(\eta_{\Si}(z),\bar{v})$. Over $\Si_{\top} \cup \Si_{\down}$, $\tilde{\phi}$ is an anti-complex linear map of the form 
$$ \tilde{\phi} \colon \tilde{E}\!\mid_{\Si_{\top}} \to \tilde{E}\!\mid_{\Si_{\down}}.$$
A section of $(\tilde{E},\tilde{\phi})$ is of the form $\xi=(\xi_{\top},\xi_0,\xi_{\down})$, with matching conditions at the nodes. A section $\xi$ is real if and only if
$$ \xi_{\down}(\eta_{\Si}(z)) = \tilde{\phi} (\xi_{\top}(z)), \forall z\in\Si_{\top} \quad \tn{and}\quad \xi_0 \in \Gamma(\tilde{E}\mid_{\Si_0})_{\R}. $$
Therefore, it is determined by an arbitrary section of $\tilde{E}\!\mid_{\Si_{\top}}$ and a real section of $\tilde{E}\!\mid_{\Si_0}$ which match at $q$.

The matching condition at the nodes gives a short exact sequence
$$ 0 \to W^{1,p}(\tilde{E})_{\R} \to W^{1,p}(\tilde{E}\mid_{\Si_{\top}}) \oplus W^{1,p}(\tilde{E}\mid_{\Si_0})_{\R} \to \C^m_q \to 0.
$$
The associated determinant of the pair $(\tilde{E},\tilde{\phi})$ is given by
\begin{equation}\label{indexbundle} 
\tn{det}_{\R} (\tilde{E}) \cong \tn{det}_{\C} (\tilde{E}\mid_{\Si_{\top}}) \otimes \tn{det}_{\R}(\tilde{E}\mid_{\Si_0})  \otimes \tn{det}_{\C}(\C^m_{q})^*.
\end{equation}
Over $\Si_0$, the determinant bundle is canonically isomorphic (after deforming the Cauchy-Riemann operator) to 
$$ \La^{\top} H^0(\P^1 \times \C^m)_\R =\La^{\top}_{\R} \R^m \subset \La^{\top}_{\C} \C^m.$$
It inherits an orientation from the choice of trivialization. 
Since $\tn{det}_{\C} (\tilde{E}\mid_{\Si_{\top}})$ and $\tn{det}_{\C}(\C^m_q)^*$ carry orientations induced by their complex structures, they are canonically oriented. Thus, (\ref{indexbundle}) induces an orientation on $\tn{det}_{\R} (\tilde{E})$.
\end{proof}

\begin{proof}[Proof of Theorem \ref{square-root-orientation-thm}] 
By Lemma \ref{trivialization-to-orientation}, a systematic way of trivializing $u^*TX$ over $\C^*\subset \P^1$ would orient $\mc{P}_{0}(X,A)^{\phi,\eta}$. Let $K_X= \Lambda_{\C}^{\top}T^*X $ be the canonical complex line bundle over $X$. It inherits an involution $K_{\phi}\colon K_X \to K_X$ (covering $\phi$) from $T_\phi$. Therefore, it is a complex line bundle with an involution. Any admissible trivialization of $u^*TX\!\mid_{\C^*}$ canonically induces an admissible trivialization of $u^*K_X\!\mid_{\C^*}$ and changing the homotopy class of admissible trivialization of the former changes the homotopy class of the induced admissible trivialization. We can therefore reduce the orientation problem to the problem of finding a canonical way of admissibly trivializing $u^*K_X$. This is an easier problem because $K_X$ is just a line bundle and has less structure than~$TX$. 

Let $(\cL,\phi_\cL) \ra (X,\phi)$ be any complex line bundle over $X$ with an anti-complex linear involution $\phi_\cL$ covering $\phi$. The line bundle $\cL^{\otimes 2}$ inherits an involution from the one on $\cL$ by 
$$ 
\phi_{\cL^{\otimes 2}} (v_1 \otimes v_2 ) = \phi_{\cL}(v_1) \otimes \phi_{\cL}(v_2). 
$$
Every admissible trivialization of $u^*\cL\mid_{\C^*}$ induces an admissible trivialization of $u^*\cL^{\otimes 2}\mid_{\C^*}$. However, changing the homotopy class of trivialization of $L$ does not change the homotopy class of the induced trivialization on $\cL^{\otimes 2}$, since changing the trivialization of $\cL$ by the complex linear map $R_{-1}$ of Lemma \ref{homotopy-classes} changes the homotopy class of admissible trivialization of $\cL^{\otimes 2}$ by $R_{-1} \otimes R_{-1}= \id$. Thus, for the complex line bundle $(\cL^{\otimes 2}, \phi_{\cL}\otimes_{\C} \phi_\cL)$ as above $u^*\cL^{\otimes 2}$ has a canonical admissible trivialization.

We conclude that given a choice of real square root $(K_X,K_{\phi})\cong (\cL^{\otimes 2}, \phi_{\cL}\otimes_{\C} \phi_\cL)$ for $K_X$,  it provides a choice of admissible trivialization for every $u^*TX|_{\C^*}$, hence an orientation on $\mc{P}_{l}(X,A)^{\phi,\eta}$. Finally, together with the choice of orientation on $TG_\eta$ given in Section~\ref{ch:realcurves}, we obtain an orientation on $\mc{M}_{l}(X,A)^{\phi,\eta}$ such that
$$
T_u\mc{P}_{l}(X,A)^{\phi,\eta}\cong T_f\mc{M}_{l}(X,A)^{\phi,\eta}\oplus T_{\tn{id}}G_\eta
$$ 
is an oriented isomorphism of vector spaces; see Lemma~\ref{new-lemma} for the negative sign.
\end{proof}

\subsection{Complimentary remarks and examples}\label{sec:remarks}

Proposition~\ref{prop:existence}, which we prove below, provides examples of symplectic manifolds 
with the canonical bundle admitting a real square root.

\begin{lemma}\label{lem:lift1}
Let $\cL$ be a holomorphic line bundle over a compact K\"ahler manifold $X$ with an anti-holomorphic involution $\phi$.
Up to multiplication by a constant number in $U(1)\subset \C^*$,
$\cL$ admits at most one anti-holomorphic conjugation lifting $\tilde\phi$ of $\phi$.
\end{lemma}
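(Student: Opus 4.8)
The plan is to compare two lifts directly via a composition trick. Suppose $\tilde\phi_1$ and $\tilde\phi_2$ are both anti-holomorphic conjugations of $L$ lifting $\phi$; each is thus a fiberwise $\C$-antilinear involution of the total space of $L$ covering $\phi$. First I would observe that $f=\tilde\phi_2\circ\tilde\phi_1\colon L\to L$ is holomorphic, being a composition of two anti-holomorphic maps, that it covers $\phi\circ\phi=\id_X$, and that it is $\C$-linear on each fiber, being a composition of two $\C$-antilinear maps. Hence $f$ is a holomorphic automorphism of the line bundle $L$ over the identity of~$X$.

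Next I would identify all such automorphisms. A holomorphic automorphism of a line bundle over $\id_X$ is multiplication by a nowhere-vanishing holomorphic function $\lambda\colon X\to\C^*$, and since $X$ is compact and connected this function is a constant $\lambda\in\C^*$. Using $\tilde\phi_1^2=\id$ we then get $\tilde\phi_2=f\circ\tilde\phi_1=\lambda\cdot\tilde\phi_1$, so the two lifts differ only by the scalar~$\lambda$. Equivalently, the set of anti-holomorphic lifts of $\phi$ to $L$ is a torsor under the group of holomorphic automorphisms of $L$ covering $\id_X$, which is $\C^*$.

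Finally I would cut this down using that $\tilde\phi_2$ must itself be an involution. Since $\tilde\phi_1$ is $\C$-antilinear, $\tilde\phi_2^2(v)=\lambda\,\tilde\phi_1\bigl(\lambda\,\tilde\phi_1(v)\bigr)=\lambda\bar\lambda\,\tilde\phi_1^2(v)=|\lambda|^2 v$, so $\tilde\phi_2^2=\id$ forces $|\lambda|=1$. Thus any two such lifts differ by multiplication by an element of $U(1)\subset\C^*$, which is exactly the assertion; the same computation shows conversely that rescaling a lift by a unit scalar again yields a lift, so the bound is sharp. I do not expect a genuine obstacle here: the only place the hypotheses are used is in forcing $\lambda$ to be constant (compactness and connectedness of $X$), and the Kähler assumption is not really needed beyond guaranteeing that $X$ is a compact complex manifold, being inherited from the ambient setup.
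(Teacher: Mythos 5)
Your proof is correct and follows essentially the same route as the paper: compose the two lifts to get a holomorphic, fiberwise $\C$-linear automorphism of $L$ over $\id_X$, use compactness to conclude it is a constant scalar, and use the involution condition to pin the scalar to $U(1)$. In fact you supply the last step ($\tilde\phi_2^2 = |\lambda|^2\,\id$ forces $|\lambda|=1$) more explicitly than the paper, which simply asserts $\rho\equiv e^{\mathfrak{i}\theta}$; your observation that the Kähler hypothesis is superfluous beyond compactness of the complex manifold is also accurate.
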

\begin{proof}
Assuming the existence, let $\tilde\phi_1$ and $\tilde\phi_2$ be two anti-holomorphic conjugation lifts of $\phi$.
Then 
$$\tilde\phi_2=\rho \circ \tilde\phi_1,$$ 
for some holomorphic automorphism $\rho\colon X \to \C^*$. 
Since $X$ is compact $\rho \equiv e^{\mathfrak{i} \theta}$ is constant.
\end{proof}

\begin{lemma}\label{lem:lift2}
Let $\cL$ be a complex line bundle over a symplectic manifold $X$ with an anti-symplectic involution $\phi$.
Assuming $H^1(X,\R)=0$, every two anti-complex linear conjugation lifts $\tilde\phi$ of $\phi$ are equivariantly isomorphic.
\end{lemma}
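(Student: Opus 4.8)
The plan is to reinterpret the statement as the vanishing of a group cohomology group $H^1(\Z/2;-)$ and then compute it.

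First I would fix one lift $\tilde\phi_1$ and note that any other lift $\tilde\phi_2$ differs from it by a complex-linear gauge transformation: the composite $g:=\tilde\phi_2\circ\tilde\phi_1$ is complex-linear, being a composition of two anticomplex-linear maps, and it covers $\phi^2=\id_X$, so it is multiplication by a smooth function $g\colon X\to\C^*$. Let $\mc M:=C^\infty(X,\C^*)$, equipped with the involution $\sigma(h):=\ov{h\circ\phi}$. Expanding $\tilde\phi_1^2=\tilde\phi_2^2=\id$ yields the relation $g\cdot\sigma(g)=1$, i.e.\ $g$ is a $1$-cocycle for the $\Z/2$-module $\mc M$. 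On the other hand, a complex-linear bundle automorphism of $L$ over $\id_X$ is multiplication by some $h\in\mc M$, and unwinding the intertwining condition $\Psi\tilde\phi_1=\tilde\phi_2\Psi$ shows it holds for $\Psi$ equal to multiplication by $h$ precisely when $g$ lies in the subgroup of coboundaries for the $\sigma$-action on $\mc M$. Thus the lemma is equivalent to $H^1(\Z/2;\mc M)=0$.

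Next I would split $\mc M$ equivariantly. The polar decomposition $h=|h|\cdot(h/|h|)$ together with $\log$ yields a $\Z/2$-module isomorphism $\mc M\cong C^\infty(X,\R)\oplus C^\infty(X,S^1)$, where $\sigma$ acts on the first summand by precomposition with $\phi$ and on the second by $u\mapsto\ov{u\circ\phi}$. Since $C^\infty(X,\R)$ is a $\Q$-vector space, $H^{\ge 1}$ of any finite group with coefficients in it vanishes; explicitly, a cocycle $\rho$ (so $\rho+\rho\circ\phi=0$) is the coboundary of $-\rho/2$. For the $S^1$-summand I would use the exponential sequence
\[
0\longrightarrow C^\infty(X,\Z)\longrightarrow C^\infty(X,\R)\longrightarrow C^\infty(X,S^1)\longrightarrow 0 ,
\]
which is right-exact precisely because every smooth map $X\to S^1$ is null-homotopic, i.e.\ $H^1(X;\Z)=0$. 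This is where the hypothesis enters: $H_1(X;\Q)=0$ forces $H_1(X;\Z)$ to be torsion, so $H^1(X;\Z)\cong\Hom(H_1(X;\Z),\Z)=0$ by universal coefficients. Since $X$ is connected, $C^\infty(X,\Z)=\Z$ with $\sigma$ acting by $-1$, whence $H^2(\Z/2;C^\infty(X,\Z))=0$ (for $\Z/2$ the group $H^{2k}$, $k\ge1$, is a quotient of the $\sigma$-fixed subgroup, which here is $0$). The long exact sequence then traps $H^1(\Z/2;C^\infty(X,S^1))$ between $H^1(\Z/2;C^\infty(X,\R))=0$ and $H^2(\Z/2;C^\infty(X,\Z))=0$, so it vanishes, and therefore $H^1(\Z/2;\mc M)=0$ as well, which proves the lemma.

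I expect the $S^1$-factor to be the only real content: the $\C^*$-valued cocycle $g$ can in principle carry a winding-number obstruction (already visible for $X=S^1$), and killing it is exactly what $H_1(X;\Q)=0$ buys — indeed it seems cleanest to record upfront that, under the standing connectedness assumption, this hypothesis is equivalent here to $H^1(X;\Z)=0$, which is what the argument truly uses. Everything else — the cocycle identity $g\cdot\sigma(g)=1$, the coboundary reformulation of ``equivariantly isomorphic'', and the $\sigma$-equivariance of the polar/logarithm decomposition — is routine bookkeeping with bundle maps.
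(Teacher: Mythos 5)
Your proof is correct, but it takes a genuinely different route from the paper's. The paper argues concretely: it observes that the cocycle $\rho:=\tilde\phi_2\circ\tilde\phi_1\colon X\to\C^*$ must be null-homotopic because $H_1(X;\Q)=0$ forces the image of $\pi_1(X)$ in $\pi_1(\C^*)\cong\Z$ to be trivial, extracts a smooth square root $\sqrt\rho\colon X\to\C^*$, shows that conjugating $\tilde\phi_1$ by multiplication by $\sqrt\rho$ produces an anticomplex involution equal to $\pm\tilde\phi_2$, and removes the residual sign ambiguity by replacing $\sqrt\rho$ with $\mathfrak{i}\sqrt\rho$ when needed. You instead recast the statement as the vanishing of $H^1(\Z/2;C^\infty(X,\C^*))$, split the coefficient module $\Z/2$-equivariantly by polar decomposition, and dispatch the $S^1$-summand with the exponential sequence; the sole topological input is again that every smooth map $X\to S^1$ lifts to $\R$, i.e.\ $H^1(X;\Z)=0$, which (given connectedness) you correctly identify as equivalent to $H_1(X;\Q)=0$. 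Your version is more systematic: the $\pm$-ambiguity that the paper handles by hand is absorbed automatically into the computation $H^2(\Z/2;\Z)=0$ for the sign action, and the cohomological framing isolates cleanly exactly which piece of the topology of $X$ is being used. The paper's version is more elementary and produces the intertwining automorphism by an explicit formula rather than by the vanishing of an obstruction group. Both are sound routes to the same conclusion.
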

\begin{proof}
Assuming the existence, as in the proof of Lemma~\ref{lem:lift1}, let $\rho\colon X \to \C^*$ be the resulting function. 
From $\phi_2^2=\tn{id}$ we conclude that 
\begin{equation}\label{equ:rho}
\rho(\phi(x))\ov{\rho(x)}=\tn{id}.
\end{equation} 
Since $H^1(X,\R)=0$, for every loop $\gamma \in \pi_1(X)$ $\int_\gamma \rho^*\tn{d} \theta =0$; therefore, $\tn{image}(\rho(\gamma))\subset \C^*$ is contractible. 
Thus, there is a well-defined square root
$$ \sqrt{\rho}\colon X \to \C^*,\quad \sqrt{\rho}^2=\rho.$$
From Equation~\ref{equ:rho} together with the identity
$$ \sqrt{\rho}(\phi(x)) \circ \phi_1(x) \circ \sqrt{\rho}(x)= \sqrt{\rho}(\phi(x)) \circ \ov{\sqrt{\rho}(x)} \circ \phi_1(x) $$ 
we conclude that $\psi=\sqrt{\rho}(\phi(x)) \circ \phi_1(x) \circ \sqrt{\rho}(x)$ is an anti-complex linear involution isomorphic to either 
$\phi_2$ or $-\phi_2$. 
If the former happens, we conclude that $(\cL,\phi_2)$ is equivariantly isomorphic to $(\cL,\phi_1)$; otherwise, changing $\sqrt{\rho}$ with $\mathfrak{i}\sqrt{\rho}$ we obtain the desired isomorphism.
\end{proof}

\begin{proof}[Proof of Proposition~\ref{prop:existence}]
If $c_1(TX)=4\alpha$ for some $\alpha$ as in the statement of the proposition, complex line bundle $\cL$ with the Chern class $2\alpha$ has a real structure given by the isomorphism 
$$\cL= \cL' \otimes \ov{\phi^*(\cL')},$$
where $\cL'$ is a complex line bundle with the Chern class $\alpha$.
From the isomorphism of complex line bundles
$$K_X \cong \cL^* \otimes \cL^*$$
and Lemma~\ref{lem:lift2} we conclude that the canonical real structure and the one induced by the above isomorphism on $K_X$ are equivariantly isomorphic; thus, $(K_X,\phi_{K_X})$ admits a real square root.

Similarly, under the assumptions of the second part, the line bundle $[2D] \cong [D] \otimes [\ov{\phi_*D}]$ admits an anti-holomorphic involution.
Thus, the line bundle  
$$K_X=[2D]\otimes [2D]$$
admits an anti-holomorphic involution and a real square root. By Lemma~\ref{lem:lift1}, the canonical real structure and the one induced by the above isomorphism on $K_X$ are equivariantly isomorphic; thus, $(K_X,\phi_{K_X})$ admits a real square root.
\end{proof}

In particular, if either $H^1(X,\R)=0$ and $c_1(TX)=0$ or $X$ is a compact K\"ahler Calabi-Yau
with anti-holomorphic involution $\phi$, then $(K_X,\phi_{K_X})$
admits a real square root. Similar but weaker result can be found in \cite[Lemma 2.9]{C}.
Through the isomorphism (\ref{equ:ind-tri}), we showed that the existence of a square root for $(K_X,\phi_{K_X})$ implies that the Lagrangian $L=\Fix(\phi)$ is orientable. 
In the case of a K\"ahler Calabi-Yau manifold, the orientability of the Lagrangian $L$ can also be seen directly as follows. 
It is possible to choose a holomorphic volume form $\Omega$ so that $\phi^*\Omega=\bar\Omega$.
On the fixed locus, it restricts to a (real-valued) volume form.

In Section~\ref{EquivLocal_sec} 
we consider $(\P^{4m-1},\eta_{4m-1})$; since $4|K_{\P^{4m-1}}$, it has a real square root. 
If $K_X$ is trivial as a complex line bundle and $H_1(X,\Z)=0$ (i.e $X$ is a symplectic Calabi-Yau manifold), then $K_X$ has a real square root; moreover, in this case we can fix an admissible trivialization of $K_X$ itself over $X$ (independent of any map $u$) and thus determine an orientation 
of the moduli space $\mc{M}_l(X,A)^{\phi,\eta}$.

As illustrated by the two examples below, there are cases where the determinant bundle is not orientable. 
The first example is similar to the non-orientable example of \cite[Section 8.1.2]{FOOO}.
\begin{example}\label{ex:1}
Let $E= S^1\times \P^1 \times \C \ra S^1 \times \P^1$. Define a family of involutions, 
 $$ 
\phi_{s}\colon E\mid_{\left\{s\right\}\times \P^1} \to E\mid_{\left\{s\right\}\times \P^1}, \quad \phi_s(z,v)= (\eta(z), \ov{e^{2\pi is}v})\quad \forall s\in S^1. $$
The real line bundle $F\ra S^1$ given by $F_s = H^0(E\mid_{\left\{s\right\}\times \P^1})_{\R}$ is then not orientable.
\end{example}

\begin{example}
Let $X=\R^2/\Z^2 \times \P^1 \times \P^1$, $A=\{\tn{pt}\}\times \P^1 \times \{\tn{pt}\} \in H_2(X,\Z)$,
\begin{equation}
\begin{split}
&\phi\colon X\to X,\qquad \phi(s,t,z,w)=(s,-t,-\frac{1}{\bar{z}},\ov{ e^{2\pi i s} w}),\\
&Y= \{(s,t,w)\in \R^2/\Z^2\times \P^1: (-t,\ov{e^{2\pi i s}w})=(t,w)\}.
\end{split}
\end{equation}
The space $Y$ is a union of two Klein bottles with double cover
$$ \R\cup\{\infty\} \times \R/\Z \times \{0,1/2\} \to Y,\quad (a,s,t) \to (2s,t,ae^{2\pi i s}).$$
Let $\pi\colon X \to \R^2/\Z^2 \times \P^1$ be the projection to the first and third factors. Since 
$$ f: \mc{M}(X,A)^{\phi,\eta} \to Y, \qquad [u] \to \pi(\tn{Im}(u)),$$
is well-defined and is a diffeomorphism, it follows that $\mc{M}(X,A)^{\phi,\eta}$ is not orientable. If $\gamma \subset \mc{M}(X,A)^{\phi,\eta}$ is the preimage of the map $S^1 \to Y$, $s\to (s,0,0)$,
$$ \gamma^* \tn{det}(T\mc{M}(X,A)^{\phi,\eta}) = \Lambda^{\top} H^0_{\R}(\gamma^*TX)\otimes (\Lambda^{\top} \tn{Lie}(G_{\eta}))^* = \R \otimes F,$$
where $F$ is the unorientable line bundle in Example~\ref{ex:1}.
\end{example}

\begin{remark}
In \cite{C}\footnote{Originally published in French about the same time as this paper 
was first published on arXiv, with an English summary \cite{CT} uploaded later.}, 
Cr\'etois approaches the orientation problem from a different point of view. 
He computes the induced sign of the action of an automorphism of a complex vector bundle with a  real structure  $(E,c_E)$ on the orientations of the determinant line bundle over the space of Cauchy-Riemann operators on $(E,c_E)$. 
His method is well suited to the case where the real locus of the underlying curve is not empty. 
The case related to our work is when $(\Si,c_\Si)=(\P^1,\eta)$ and the automorphism is the lift of either the identity automorphism or $\varphi([z,w])=[w,z]$ (with $\eta$ given by (\ref{equ:etainv})).
In this case, he uses (see \cite[Section 3.2.3]{CT}) a symplectic divisor $D$ (a polarization), invariant under the involution, which is Poincare dual of the first Chern class and finds an equation (\cite[Theorem 7]{CT}) for the first Stiefel Whitney class of the moduli of real maps that intersect $D$ transversely. 
For example, if the canonical bundle is the square of a complex line bundle admitting a real structure, and if one can find a nice section of this square root, then the first Stiefel Whitney class vanishes.
One issue with this approach is that we need to consider different polarizations to cover the entire moduli space and this increases the complexity of the calculations.     
In the cases where the ambient manifold has no real part, it is not clear how to construct a section of the canonical bundle.
Also, this approach does not provide a choice of orientating the moduli space.
\end{remark}

\subsection{Kuranishi structure}\label{kuranishi-structure}

If $\ov{\mc{M}}_l(X,A)^{\phi,\eta}$ is not an orbifold, in order to construct a virtual fundamental class, we need to put a Kuranishi structure on the moduli space. Such a construction for $\ov{\mc{M}}_{k,l}(X,A)^{\phi,\tau}$ is described in \cite[Section 7]{S}; we only describe the necessary adjustments.
For simplicity, we ignore the marked points until the end of this construction.\\
\newtheorem*{proofofboundary}{Proof of Proposition~\ref{boundary-freeness} and the first part of Theorem~\ref{thm:main}}
\begin{proofofboundary}
For $(u,(z_i,\ov{z_i})_{i=1}^{l}) \in \mc{M}_l(X,A)^{\phi,\eta}$, let 
$$E_u \equiv u^{*}TX \ra \P^1,\quad E_{u}^{0,1}\equiv (T^*\P^1)^{0,1} \otimes_{\C} E_u.$$
There are commutative diagrams
\begin{equation}\begin{split}\label{diag:E}
\xymatrix{
& E_u \ar[d]_{\pi} \ar[rr]^{T_{\phi}} && E_{u} \ar[d]_{\pi}&&&  E_u^{0,1} \ar[d]_{\pi} \ar[rr]^{T^1_{\phi}} && E_{u}^{0,1} \ar[d]_{\pi}\\
& \P^1             \ar[rr]^{\eta}     && \P^1                    &&& \P^1                     \ar[rr]^{\id}    && \P^1}
\end{split}\end{equation}
where $T_{\phi}v=d\phi(v)$ and $T^1_{\phi}\alpha=d\phi\circ\alpha\circ d\eta$.
The deformation theory of $\mc{P}_0(X,A)^{\phi,\eta}$ is described  by  the linearization of the Cauchy-Riemann operator,
\begin{equation}\label{linear}
 L_{J,u} \colon W^{k,p}(E_u) \to W^{k-1,p} (E_u^{0,1}), \quad p>2, k\geq1;   
\end{equation}
see \cite[Chapter 3]{MS2} for a similar situation. If $\nabla$ is the Levi-Civita connection of the metric $\om(\cdot,J\cdot)$, $L_{J,u}$ can be written as
$$L_{J,u} (\xi)= \frac{1}{2} (\nabla\xi + J \nabla\xi \circ j)-\frac{1}{2} J (\nabla_\xi J) \partial_J(u).$$
There is a commutative diagram
$$\xymatrix{
& W^{k,p}(E_u) \ar[d]_{\tilde{T}_{\phi}} \ar[rr]^{L_{J,u}} && W^{k-1,p}(E_u^{0,1}) \ar[d]_{\tilde{T}^1_{\phi}} \\
& W^{k,p}(E_u)                           \ar[rr]^{L_{J,u}} && W^{k-1,p}(E_u^{0,1})}                             $$
where $\{\tilde{T}_{\phi}\xi\}(z)=T_{\phi}(\xi(\eta(z)))$  and $\{\tilde{T}^1_{\phi}\alpha\}(z) = T^1_{\phi}(\alpha(z))$. 
Let 
\begin{equation}\begin{split}
&W^{k,p}(E_u)_{\R}=\{\xi\in W^{k,p}(E_u)\mid \tilde{T}_{\phi}(\xi)=\xi\},	\\
&W^{k-1,p}(E_u^{0,1})_{\R}=\{ \alpha \in W^{k-1,p}(E_u^{0,1})\mid \tilde{T}^1_{\phi}(\alpha)=\alpha\}
\end{split}\end{equation}
denote the spaces of \textsf{real} sections.
Let $H^0(E_u)_{\R}$ and $H^1(E_u)_{\R}$ be the kernel and cokernel, respectively, of the restricted operator
$$L_{J,u} \colon W^{k,p}(E_u)_{\R} \to W^{k-1,p} (E_u^{0,1})_{\R}.$$
If $H^1(E_u)_{\R}=0$, then $\mc{P}(X,A)^{\phi,\eta}$ near $u$ is a manifold $U(u)$ of real dimension
\begin{equation}
\dim_{\R} H^0(E_u)_{\R} = \tn{index}_{\R} (L_{J,u}) = c_1(A)+\dim_{\C}X;
\end{equation}
see \cite[Theorem C.1.10]{MS2}. 
In order to stabilize the domain and kill the action of the automorphism group, we need to take an slice $V(u)$ of the $G_\eta$-action on $U(u)$.
To this end, we add back few conjugate pairs of special marked points $\{(w_i,\eta(w_i))\}$, 
fix a corresponding set of (conjugate pairs of) slicing local divisors $D_i$, and take $V(u)$ to be the submanifold of the maps in $U(u)$ that intersect $D_i$ at $w_i$.
Then, by restricting the location of special marked points we will find a slice $V(u)$ of the $G_\eta$-action such that $V(u)$ is a submanifold of $U(u)$ and $\dim_\R V(u) = \dim_\R U(u)-3$. 
Each pair of ordinary conjugate marked points increases the dimension by two and we get the dimension formula (\ref{dimension}).

If $H^1(E_u)_{\R}\neq 0$, we construct a Kuranishi chart around $u$. For this aim, we choose finite-dimensional complex subspaces $\mc{E}_u \subset W^{k,p-1}(E^{0,1}_{u})$ such that
\begin{enumerate}[leftmargin=*]\label{obstruction-bundle}
	\item every $\xi \in \mc{E}_u $ is smooth and supported away from the boundary, special, and marked points;
	\item $\tilde{T}_{\phi}^1(\mc{E}_u)= \mc{E}_u$;
	\item $L_{J,u}$ modulo $\mc{E}_u$ is surjective.
\end{enumerate}
After putting enough marked points and slicing conditions to kill the automorphism group, we choose our Kuranishi neighborhood to be $V(u)= [\bar\partial^{-1}(\mc{E}_u)]_{\R}$, which is a smooth manifold of dimension 
$$c_1(A)+\dim_{\C}X-3 + 2l + \dim_{\C}(\mc{E}_u).$$

The obstruction bundle $\mc{E}(u)$ at each $f \in V(u)$ is obtained by parallel translation of $\mc{E}_{u}$ with respect to the induced metric of $J$. We thus get a Kuranishi neighborhood $(V(u),\mc{E}(u))$. The Kuranishi map in this case is just the Cauchy-Riemann operator $f \ra \bar\partial(f)$.

In order to construct Kuranishi charts for $u$ in the boundary strata of $\ov{\mc{M}}_l(X,A)^{\phi,\eta}$, we need gluing theorems as in \cite[Chapter 7]{FOOO}. The gluing theorems are identical to those for $J$-holomorphic disks; we thus omit the details and refer the reader to \cite{FOOO}.
\qed
\begin{remark}
If $(X,\om)$ is semi-positive or strongly semi-positive, \cite[Definition 6.4.5]{MS2} or \cite[Definition 7.1]{G2}, then the invariants can be defined via classical (geometric) methods of \cite{MS2} or \cite{RT}. In the strongly semi-positive case, this is for example done for the $(\phi,\tau)$-moduli space in \cite[Theorem 1.4]{G2}.
\end{remark}
\end{proofofboundary}

\section{Proof of Theorem~\ref{thm:main} and real GW invariants}\label{ch:mixed-invariants}

We continue this section with the proof of the first part of Theorem~\ref{thm:main}.
If $L=\Fix(\phi)$ is non-empty, the codimension one boundary of $\ov{\mc{M}}_l(X,A)^{\phi,\eta}$ 
might be non-empty; see (\ref{eq:sphere-bubbling}).
An element of codimension one boundary is of the form $(u,\Si= \Si_1 \cup_{q} \Si_2)$, 
where $\Si_i=\P^1$, $\eta \colon \Si_1 \to \Si_2$, and $u(q) \in L$. 
After a suitable  reparametrization, we may assume $q=0 \in \P^1$ and $\eta(z)=\ov{w}$. 
For real parameters $\ep\neq 0$, we can glue $\Si$ into a family of smooth curves 
$$ \Si_{\ep} = \{ (z,w) \in \C^2 : zw = \ep\}.$$
For $\ep\in\R$, $\Si_{\ep}$ inherits a complex conjugation from $\eta$: 
$$\eta_{\ep} \colon \Si_{\ep}\to\Si_{\ep},\qquad   \eta_{\ep}(z,w)= (\ov{w},\ov{z}).$$ 
The fixed point set of $\eta_{\ep}$ is $S^1$ if $\ep>0$ and is empty if $\ep<0$. 
Assuming regularity, by smoothing in one direction ($\ep$ negative), we get real curves without fixed points 
in $\ov{\mc{M}}_l(X,A)^{\phi,\eta}$;  by smoothing in the other direction ($\ep$ positive), 
we get real curves with fixed points in $\ov{\mc{M}}_l(X,A)^{\phi,\tau}$. 
We identify the common boundary and glue the two moduli spaces to get a new moduli 
space whose only possible boundary component comes from the disk bubbling. 
We define $\ov{\mc{M}}(X,A)^{\phi}$ to be the resulted space. 

Every disk-bubbling type nodal curve in (\ref{boundary-equ}) is of the form $(u,\Si=\Si_1\cup_q\Si_2)$, 
where $\Si_i=\P^1$, $\tau\colon\Si_i\to\Si_i$, and $q\in \Fix(\tau\!|_{\Si_i})\cong S^1$. 
After a suitable reparametrization, we may assume $q=(z_i=0)\in \C\subset \P^1$ and $\tau\!|_{\C}=c$, where $c(z_i)=\bar{z_i}$. 
For real $\ep\neq 0$, we can glue $\Si$ into a family of smooth curves
$$\Si_{\ep}=\{(z_1,z_2)\in \C^2~:~z_1z_2=\ep\}.$$
For $\ep\in\R$, $\Si_{\ep}$ inherits a complex conjugation from $\tau$: 
$$\tau_{\ep} \colon \Si_{\ep}\to\Si_{\ep},\qquad   \tau_\ep(z_1,z_2)= (\ov{z}_1,\ov{z}_2).$$ 
The fixed point set of $\tau_\ep$ is $S^1$. 
By the stability condition, for each component~$i$, 
either $l_i\neq 0$ or the map $u_i\equiv u|_{\Si_i}$ is non-trivial. 
If $l_i\neq 0$, we fix one of the marked points; 
if $u_i$ is non-trivial and somewhere injective, we fix a somewhere injective point 
of the corresponding domain. 
By tracking the image of the chosen points\footnote{In one direction, the images of these two points lie in one half disk, and in the other direction, they lie in different half disks.}, we see that gluing the map 
in positive and negative directions produce different $J$-holomorphic curves. 
If $u_i$ is multiple cover and $l_i=0$, then the obstruction bundle near
$u_i\in\mc{M}_{1,0}(X,A_i)^{\phi,\tau}$ is non-trivial and a Kuranishi neighborhood 
depends on the choice of $\mc{E}_{u_i}$ of the previous section. 
In this situation, the Cauchy-Riemann equation gives a section of the obstruction bundle.
Then we need to deform this section into close by transversal multi-sections to build a virtual fundamental class; see \cite[Section 7]{FOOO}.
By choosing (the branches of) these multi-sections non-symmetric with respect to the deck transformation of the covering map, 
we can assure that gluing in different directions produce different maps.
Therefore, the real codimension one strata (\ref{boundary-equ}) and (\ref{eq:sphere-bubbling}), corresponding 
to $\ep=0$, are indeed hypersurfaces. 
This establishes the first part of Theorem~\ref{thm:main}, i.e.~that $\ov{\mc{M}}(X,A)^{\phi}$ 
has the structure of a closed Kuranishi space;
the real codimension one strata (\ref{boundary-equ}) and (\ref{eq:sphere-bubbling}) 
are real codimension one hypersurfaces in $\ov{\mc{M}}(X,A)^{\phi}$. 
\qed

If $4|c_1(TX)$, given a compatible choice of a real square root for $K_X$ and a spin structure on $L=\Fix(\phi)$, 
the next two proposition and lemma show that $\ov{\mc{M}}_l(X,A)^{\phi}$ is orientable (in fact, oriented). 
Given such a compatible choice, the spaces $\mc{M}_l(X,A)^{\phi,\tau}$
and $\mc{M}_l(X,A)^{\phi,\eta}$ can be oriented;
see the beginning of Section~\ref{sec:open} and 
Theorem~\ref{square-root-orientation-thm}. 
The first proposition below states that the orientation of $\mc{M}_l(X,A)^{\phi,\tau}$
extends across the hypersurface~(\ref{boundary-equ}). 
Then Proposition~\ref{matching-orientation_prp} 
implies that the orientations of $\ov{\mc{M}}_l(X,A)^{\phi,\tau}$ and
$\ov{\mc{M}}_l(X,A)^{\phi,\eta}$ are compatible along the common boundary~(\ref{eq:sphere-bubbling}). 
In Proposition~\ref{matching-orientation_prp}, 
we consider the induced orientation on the boundary $\partial{M}$ of an oriented manifold $M$ to be the one given by the inward normal vector field;
i.e. 
\begin{equation}\label{equ:indori}
TM|_{\partial{M}}=T\partial{M}\oplus \R \cdot v_{\tn{in}},
\end{equation}
is an isomorphism of oriented vector spaces.
Therefore, if $M_1$ and $M_2$ are oriented and the induced orientations on $\partial M\equiv \partial M_1\cong \partial M_2$ are reverse of each other,
$M_1\cup_{\partial M} M_2$ inherits an orientation. 

\newtheorem*{proofofboundary2}{Proof of the second part of Theorem~\ref{thm:main}}
\begin{proofofboundary2}

\begin{proposition}\label{flip-orientation}
Let $(X,\om,\phi)$ be a symplectic manifold with a real structure. If $4|c_1(TX)$, then a choice of spin structure on $L=\Fix(\phi)$ determines an orientation on $\ov{\mc{M}}(X,A)^{\phi,\tau}$.
\end{proposition}

This proposition is a special case of \cite[Theorem 1.4]{G2} or \cite[Theorem 1.1]{FO-3}. 
We first lift the orientation problem to the corresponding moduli spaces of decorated $J$-holomorphic disks.
A spin structure determines an orientation on 
$$\mc{M}_{1,l}(X,L,\beta)^\disk_\dec~\tn{and}~\mc{M}_l(X,L,\beta)^\disk_\dec.$$
This orientation induces\footnote{The fiber product orientation depends on the convention.} an orientation on 
$$
\mc{M}_{1,l_1}(X,L,\beta_1)^\disk_\dec \times_{(\ev^B_1,\ev^B_1)} \mc{M}_{1,l_2}(X,L,\beta_2)^\disk_\dec 
$$
which descends to an orientation on (\ref{boundary-equ}).
Let
$$
\mc{M}_{1,l_1}(X,L,\beta_1)^\disk_\dec \times_{(\ev^B_1,\ev^B_1)}
 \mc{M}_{1,l_2}(X,L,\beta_2)^\disk_\dec \times \R^{\geq 0} \stackrel{\Psi}{\rightarrow} \ov{\mc{M}}(X,L,\beta)^\disk_\dec
$$
be the gluing map.
With a proper convention of defining the fiber product orientation, $\Psi$ is orientation preserving.
Then we observe that gluing positively or negatively in the real curve formulation corresponds to the flipping one of the disk components via $\tau_{\mc{M}}$ (the map $\tau_*$ in \cite[Theorem 1.1]{FO-3}), i.e. gluing negatively corresponds to $(u_1,u_2,\ep) \to \Psi(u_1,\tau_{\mc{M}}(u_2),-\ep)$; c.f. (\ref{equ:tauM}). 
Finally, \cite[Theorem 1.4]{G2} or \cite[Theorem 1.1]{FO-3} shows that if $4|c_1(TX)$, this flipping action is orientation reversing, hence the gluing map 
$$
(\mc{M}_{1,l_1}(X,A_1)^{\phi,\tau} \times_{(\ev^B_1,\ev^B_1)}
 \mc{M}_{1,l_2}(X,A_2)^{\phi,\tau}/G) \times \R \to  \ov{\mc{M}}(X,A)^{\phi,\tau}
$$
given by smoothing domain with respect to the corresponding gluing parameter $\ep$, is an oriented isomorphism. Therefore, the orientation of ${\mc{M}}(X,A)^{\phi,\tau}$ extends across the disk-bubbling codimension one strata.\qed

\begin{remark}
A  relative spin structure $[V,\si]$ on $(X,L)$ also determines an orientation 
on every moduli space $\mc{M}_l^{\disk}(X,L,\beta)$ are still orientable;
see \cite[Theorem~8.1.1]{FOOO}.
These orientations descend to $\mc{M}_l(X,A)^{\phi,\tau}$ if
$$\frac12 \lr{c_1(TX),A}\equiv \lr{w_2(V),A} \mod2;$$
see \cite[Corollary~5.9]{XCapsSigns}.
The conclusion of Proposition~\ref{flip-orientation} is still true.
For example, $(\P^{4m+1},\R\P^{4m+1})$ is not spin, but is relatively spin; 
each choice of the two homotopy classes of relative spin structures determines 
an orientation on $\ov{\mc{M}}_l(X,A)^{\phi,\tau}$.
In the more general setting of Pin structures, 
analogous sign computations are carried out in \cite[Proposition 2.12]{S}.
\end{remark}

\begin{proposition}\label{matching-orientation_prp}
Let $(X,\om,\phi)$ be a symplectic manifold with a real structure such that $4|c_1(TX)$. Given a compatible pair of a real square root for $K_X$ and a spin structure on $L=\Fix(\phi)$, 
if $A,B\in H_2(X)$ are such that $A=B-\phi_*B$, then via the gluing maps
\begin{equation}\label{glue-map}
 (\mc{M}_{1}(X,B)\times_{\ev_1} L) \times \R^+  
\to\mc{M}(X,A)^{\phi,\tau},\mc{M}(X,A)^{\phi,\eta}
\end{equation}
the induced orientations on the first component of 
the left-hand side as the boundary of $\ov{\mc{M}}(X,A)^{\phi,\tau}$ and $\ov{\mc{M}}(X,A)^{\phi,\eta}$ are reverse of each other.
\end{proposition}

\begin{proof}
By definition, the induced orientations on $L$ via the given spin structure and the given real square root are inverse of each other.
Without lose of generality, we may assume that the orientation of $L$ coincides with the induced orientation of the given spin structure and is reverse of the one given by real square root.
A curve in the common boundary of these two moduli spaces is of the form 
$$f=[u,\Si=\P^1_{\top} \cup_q \P^1_{\bot}],$$ 
with the involution $c$ over $\Si$ having one fixed point, the node $q$.
We replace each such $f$ with the unstable map 
$$\tilde{f}=[\tilde{u},\tilde{\Si}=\P^1_{\top} \cup \P^1_0 \cup \P^1_{\bot}],$$
with $\tilde{u}$ restricting to the constant $u(q)$ over the central part $\P^1_0$.  
We can view $\tilde{f}$ as an element of $\partial\ov{\mc{M}}(X,A)^{\phi,\tau}$ by extending 
the involution to $\P^1_0$ via $c|_{\P^1_0}=\tau$ and as an element of
$\partial\ov{\mc{M}}(X,A)^{\phi,\eta}$ by extending the involution to $\P^1_0$ via 
$c|_{\P^1_0}=\eta$. 
The real automorphism group of $\tilde{f}$ restricted to the middle component is $S^1$.
 
First, lets consider $\tilde{f}$ with $c|_{\P^1_0}=\tau$. In this case we can divide $\tilde{f}$ into two nodal $J$-holomorphic disks; let 
$$\tilde{f}_\top=[\tilde{u},\tilde{D}=\P^1_{\top} \cup D_0],$$
be the half including $\P^1_\top$ (the final conclusion is independent of the particular choice). 
For simplicity, we may assume that $\tilde{f}_\top$ can be glued to a $J$-holomorphic disk $f_\ep$ over the glued domain $D_\ep\cong D$; otherwise, we need to consider the obstruction bundle.
In order to understand the induced orientation on $T_f\partial\ov{\mc{M}}(X,A)^{\phi,\tau}$, we need to understand the orientation on $T_{f_{\ep}}\mc{M}^\disk(X,L,\beta)$ and extend it to the sphere-bubbling boundary. 
Following the orientation and gluing argument in \cite[Section 8.3]{FOOO} and 
\cite[Section 7.4.1]{FOOO}, in order to orient $T_{f_{\ep}}\mc{M}^\disk(X,L,\beta)$, we orient the tangent bundle of the parametrized $J$-holomorphic disks $T_{u_{\ep}}\mc{P}^\disk(X,L,\beta)$ and then consider the quotient orientation on $T_{f_{\ep}}\mc{M}^\disk(X,L,\beta)$ for which 
$$ 
T_{f_{\ep}}\mc{M}^\disk(X,L,\beta)\oplus T_{\tn{id}}G_\tau^0 \cong T_{u_{\ep}}\mc{P}^\disk(X,L,\beta)
$$
is an oriented isomorphism of vector spaces; see Lemma~\ref{new-lemma} for the negative sign.
In order to orient $T_{u_{\ep}}\mc{P}^\disk(X,L,\beta)$, we trivialize $u_{\ep}|_{\partial D_\ep}^*TL$ via the given spin structure, and degenerate $u_{\ep}^*TX$ into a bundle over $\tilde{D}$, such that over the central part the induced bundle is trivial. 
The path $\{u_\ep\}_{\ep\to 0}$ exactly describes such a degeneration. 
Over the central part $\tilde{u}|_{D_0}$ is trivial and by assumption, the orientation of $T_{u(q)}L$ coincides with the one given by the spin structure;
therefore, the space of real sections of $\tilde{u}|_{D_0}TX$ is orientably isomorphic to $T_{u(q)}L$. 

Now, lets consider $\tilde{f}$ with $c|_{\P^1_0}=\eta$.
For simplicity, we may again assume that $\tilde{f}$ can be glued to an $(\phi,\eta)$-real $J$-holomorphic map $f_\ep$ over the glued domain $\Si_\ep\cong \P^1$. 
Following the orientation procedure of the proof of Lemma~\ref{trivialization-to-orientation}, in order to orient $T_{f_{\ep}}{\mc{M}}(X,A)^{\phi,\eta}$, we orient the tangent bundle of the parametrized $J$-holomorphic spheres $T_{u_{\ep}}{\mc{P}}(X,A)^{\phi,\eta}$ and then consider the quotient orientation as above. 
In order to orient $T_{u_{\ep}}{\mc{P}}(X,A)^{\phi,\eta}$,
we fixed an admissible trivialization of $u_{\ep}^*TX$ over $\P^1-\{0,\infty\}$ given by the real square root and degenerated $u_{\ep}^*TX$ into a bundle over $\tilde{\Si}$, such that over the central part the induced bundle is admissibly trivial. 
The path $\{u_\ep\}_{\ep\to 0}$ exactly describes such a degeneration. 
Once again, over the central part $\tilde{u}|_{\P^1_0}$ is trivial and by assumption, the orientation of $T_{u(q)}L$ is reverse of the one induced from the real square root;
therefore, the space of real sections of $\tilde{u}|_{\P^1_0}TX$ is orientably isomorphic to $T_{u(q)}L$ with the reverse orientation. 

Similar to Section~\ref{kuranishi-structure}, for $\tilde{f}\in \partial \ov{\mc{M}}(X,A)^{\phi,\tau},\partial\ov{\mc{M}}(X,A)^{\phi,\eta}$, let
$E_{\tilde{u}}\equiv \tilde{u}^*TX$ and $E_{\tilde{u}}^{0,1}\equiv E_{u_\top}^{0,1}\oplus E_{u_0}^{0,1} \oplus E_{u_\bot}^{0,1}$, where $u_\top$, $u_0$, and $u_\bot$ are the restrictions of $u$ to the corresponding components, respectively. 
The deformation theory of $\ov{\mc{P}}(X,A)^{\phi,\tau}$ and $\ov{\mc{P}}(X,A)^{\phi,\eta}$ at $f$ is described by the linearization of the Cauchy-Riemann operator
$$
L_{J,\tilde{u}}\colon W^{k,p}(E_{\tilde{u}})\to W^{k-1,p}(E_{\tilde{u}}^{0,1}), ~~~p>2,k\geq 1,
$$
where $L_{J,\tilde{u}}=L_{J,u_\top}\oplus L_{J,u_0}\oplus L_{J,u_\bot}$.
Let $H^0(E_{\tilde{u}})_\R$ and $H^1(E_{\tilde{u}})_\R$ be the kernel and cokernel, respectively, of the restricted operator,
$$
L_{J,\tilde{u}}\colon W^{k,p}(E_{\tilde{u}})_\R\to W^{k-1,p}(E_{\tilde{u}}^{0,1})_\R.
$$
Assuming $H^1(E_{\tilde{u}})_\R=0$ (otherwise we need to work modulo obstruction bundle), via the gluing maps $f \to \{f_\ep\}_{\ep>0}$, 
\begin{equation}\label{equ:orient+}
H^0(E_{\tilde{u}})_\R\cong H^0(E_{u_\ep})_\R\cong T_{u_\ep}\mc{P}(X,A)^{\phi,\tau}
\end{equation}
and 
\begin{equation}\label{equ:orient-}
H^0(E_{\tilde{u}})_\R\cong H^0(E_{u_\ep})_\R\cong T_{u_\ep}\mc{P}(X,A)^{\phi,\eta}.
\end{equation}
As in the proof of Lemma~\ref{trivialization-to-orientation}, 
the orientation of $H^0(E_{\tilde{u}})_\R$ is canonically determined by the orientation of $T_{u(q)}L$. 
By the argument of the past two paragraphs, the orientation on the left-hand side of (\ref{equ:orient+}), via the gluing map, gives the orientation on the right-hand side determined by the spin structure; and the orientation on the left-hand  side of (\ref{equ:orient-}) (again via the gluing map) gives the reverse orientation on the right-hand side determined by the real square root.  

Finally, in order to complete the comparison, it remains to compare the automorphism groups of domains before and after two different gluings. 
Let 
$$G_0=\tn{Aut}_{\R}([\P^1_{\top}\cup_q\P^1_{\down}])\cong \tn{Aut}(\P^1_{\top},q)\subset \tn{PSL}(2,\C)$$
be the identity component of  the real automorphism group of $\Si$. 
This is a real $2$-dimensional complex Lie group which has a canonical orientation (although we do not care about its orientation; see Remark~\ref{fiber-prod-ori}). 
After replacing $\Si$ with $\tilde{\Si}$, the real automorphism group of the domain increases by a factor of $S^1$ and the gluing parameter of the domain takes values in $\C$. 
To kill the extra $S^1$-action in both the automorphism group and the gluing parameter, as in the statement of the lemma, we consider the gluing parameter to be positive real (absolute value of the complex one) and restrict to a real $4$-dimensional section of $G_0\times S^1$, given by
$$G_0'=\{ (g,e^{\mf{i}\theta})\subset G_0 \times S^1:~~ dg|_{T_0\P^1_{\top}}\in R^{+} e^{-\mf{i}\theta}\},$$
which is canonically isomorphic to $G_0$. 
Let
$$\mc{C}=\{ (z_t,z_0,z_b,\ep)\in\P^1\times \P^1\times \P^1 \times \R^{\geq 0}| z_tz_0=\ep, z_b\ov{c(z_0)}=\ep, ~~\ep\in \R^{\geq0}\}.$$
This is a real one-parameter family of genus zero real curves over $\R^{\geq 0}$,
$$(z_t,z_0,z_b,\ep) \to \ep,$$
with the fiber-preserving involution
$$ (z_t,z_0,z_b,\ep)\to (\bar{z_b},c(z_0),\bar{z_t},\ep),$$
that describes a gluing of the singular real curve $\tilde{\Si}$ into smooth real curves.

Over $\mc{C}$, consider the group $G_0^{\mc{C}}$ generated by the following set of maps
\begin{equation}
\begin{split}
&R_{r}\colon (z_t,z_0,z_b,\ep) \to (rz_t,z_0,rz_b,r\ep),\quad r \in \R^{+}~~\tn{close to 1}, \\
&R_{\theta}\colon (z_t,z_0,z_b,\ep) \to (e^{-\mf{i}\theta}z_t,e^{\mf{i}\theta}z_{0},e^{\mf{i}\theta}z_b,\ep),\quad e^{\mf{i}\theta}\in S^1,\\
&T_{a}\colon (z_t,z_0,z_b,\ep) \to (\frac{z_t+(-1)^{|c|}\ep^2 \bar{a}}{1+az_t}, \frac{z_0+a\ep}{1+(-1)^{|c|}\ep \bar{a}z_0}, \frac{ z_b+ (-1)^{|c|} a\ep^2}{1+\bar{a}z_b}), \quad a \in \C,
\end{split}
\end{equation}
with $|c|$ defined as in $(\ref{absphi})$. This group extends the action of $G_0'$ to the whole family. Restricted to each fiber $\mc{C}_{\ep}$, $\ep\neq 0 $, $R_{\theta}$ and $T_a$ generate the 3-dimensional real automorphism group of the fiber,  $G_{\eta}$ or $G^0_{\tau}$, depending on $c$. 
Let 
$$v_1 = \frac{d}{dr}R_r|_{r=1}, \quad v_2=\frac{d}{d\theta}R_{\theta}|_{\theta=0}, \quad v_3+\mf{i}v_4= \frac{d}{da}T_{a}|_{a=0}.$$
Restricted to $\mc{C}_0$, $-v_1,v_2,v_3,v_4$ form an oriented basis of $T_{\tn{id}}G'_0$. 
With the conventions of Section~\ref{ch:realcurves}, the restriction of  
$v_2,v_3,v_4$ to $\mc{C}_{\ep}$, $\ep \neq 0$, forms an oriented basis of $T_{\tn{id}}G_\eta$ or $T_{\tn{id}}G^0_\tau$. Finally, $v_1$ (after some positive rescaling) is a lift of the inward normal vector field $\frac{\partial}{\partial\ep}$ to the family. 
Let $ (T_f\mc{M}_1(X,B)\times_{\ev_1}L)^{[\partial \tau]}$ and $(T_f\mc{M}_1(X,B)\times_{\ev_1}L)^{[\partial \eta]}$ denote $T_f\mc{M}_1(X,B)\times_{\ev_1}L$, oriented as the boundary of $(\phi,\tau)$-space and $(\phi,\eta)$-space, respectively.
Then for each choice of $c=\tau,\eta$
$$
T_f(\mc{M}_1(X,B)\times_{\ev_1}L)^{[\partial c]}\oplus \R\cdot v_1 \cong T_f\ov{\mc{M}}(X,A)^{\phi,c}
$$ 
is an oriented isomorphism of vector spaces.
Adding the oriented basis $v_2,v_3,v_4$ to both sides, we find that 
$$
T_f(\mc{M}_1(X,B)\times_{\ev_1}L)^{[\partial c]} \oplus \bigoplus_{i=1}^{4}\R\cdot v_i\cong T_f\ov{\mc{M}}(X,A)^{\phi,c}
 \oplus \bigoplus_{i=2}^{4} \R\cdot v_i\cong T_{f}(\ov{\mc{P}}(X,A)^{\phi,c})$$
 is an oriented isomorphism of vector spaces.
 Finally, replacing the right-hand side by $H^0(E_{\tilde{u}})_\R$, since the induced orientation on $H^0(E_{\tilde{u}})_\R$ via the spin structure and the real square root are reverse of each other,
we conclude that the orientations $T_f(\mc{P}_1(X,B)\times_{\ev_1}L)^{[\partial \tau]}$ and $T_f(\mc{P}_1(X,B)\times_{\ev_1}L)^{[\partial \eta]}$ are reverse of each other. 
\end{proof}
\noindent
This finishes the proof of Theorem~\ref{thm:main}.
\qed
\end{proofofboundary2}

Thus, if $K_X$ has a real square root, $L$ is spin, and $4|c_1(TX)$, choosing the spin structure and the square root compatibly, $\ov{\mc{M}}_l(X,A)^{\phi}$ is closed and oriented. In this case, for $\theta_1,\ldots,\theta_l\in H^*(X)$, we define real GW invariants by 
$$
N_A^{\phi}(\theta_1,\ldots,\theta_l)=\int_{[\ov{\mc{M}}_l(X,A)^{\phi}]^{\tn{vir}}}\ev_1^*(\theta_1)\wedge \cdots \wedge \ev_{l}^*(\theta_l).
$$

\begin{remark}\label{fiber-prod-ori}
In the proof of Proposition~\ref{matching-orientation_prp}, 
we did not calculate the fiber product orientation on the left-hand side of (\ref{glue-map}); we just showed that the induced boundary orientations are reverse of each other.
After fixing a fiber product orientation convention, it is not hard to compare the induced and fiber product orientations directly. 
\end{remark}

\section{Proof of Theorem~\ref{zeroness}}\label{sec:zeroes}
By \cite[Proposition 2.1]{MF1}, there exists a symplectic degeneration  $\pi\colon\mc{X}\to \De$ of $X$ with an induced real structure $\phi_{\mc{X}}$ over a disk $\De\subset \C$ (which we can assume to be the unit disk) such that the central fiber $X_0=\pi^{-1}(0)$ is a simple normal crossing symplectic manifold with real structure $\phi_{\mc{X}}|_{X_0}$,
$$
X_0= X_- \cup_D X_+,\quad D\cong \P^1\times \P^1, \quad \phi_{\pm}:=\phi_{\mc{X}}|_{X_{\pm}},
$$
where $(X_-,\phi_-)$ is symplectomorphic to real quadratic hypersurface in $\P^4$ given by 
$$
x_0^2-\sum_{i=1}^{4}x_i^2=0,\qquad D=(x_0=0),
$$
and $\Fix(\phi_+)=\emptyset$. 
Moreover, the fibers over $\De^*$ are smooth and symplectically isotopic to $(X,\phi)$. Note that $\mc{N}_DX_+\cong \mc{O}(-1)|_D$; therefore, all curves inside $D$, as curves in $X_+$, have negative intersection with $D$.

\vskip.1in
Via the symplectic sum procedure, every almost complex structure $J_0$ on $X_0$, i.e. a union of two almost complex structures $J_+$ and $J_-$ on $X_+$ and $X_-$, respectively, such that both preserve $D$, extends to an almost complex structure $J_{\mc{X}}$ on $\mc{X}$. If $J_\pm$ are $(\om_\pm,\phi_\pm)$-compatible, the resulting $J_{\mc{X}}$ is also $(\om_{\mc{X}},\phi_{\mc{X}})$-compatible.
Each fiber of $\mc{X}$ over $[0,1]\subset \De$ is invariant under $\phi_{\mc{X}}$. For $t\neq (0,1]$, 
$$
(X_t=\pi^{-1}(t),\om_t=\om_{\mc{X}}|_{X_t},\phi_t=\phi_{\mc{X}}|_{X_t})
$$ 
is isomorphic to $(X,\om,\phi)$; therefore, we can think of $\{J_t=J_{\mc{X}}|_{X_t}\}_{(0,1]}$ as a family of compatible almost complex structures on $(X,\om,\phi)$ converging to the singular almost complex structure $J_0$.

\vskip.1in
Set
\begin{equation}\label{family-moduli}
\ov{\mc{M}}(X,A,\left\{J_t\right\}_{t\in (0,1]})^{\phi}\equiv \bigcup_{t\in(0,1]} \ov{\mc{M}}(X,A,J_t)^{\phi}.
\end{equation}
Let $\ov{\mc{M}}(X,A,\left\{J_t\right\}_{t\in [0,1]})^{\phi}$ be the relative stable map compactification of (\ref{family-moduli}), as in \cite[Section 3.2]{MF1}, which includes ``stable" real maps into $X_0$. 
Every element $(u,\Si)$ of $\ov{\mc{M}}(X,A,\left\{J_t\right\}_{t\in [0,1]})^{\phi}$ with image in  $X_0$ belongs to a fiber product of  real relative moduli spaces over $X_{-}$ and $X_{+}$ with matching intersections along $D$, i.e. 
\begin{equation}\label{equ:fiber-prod}
\ov{\mc{M}}(X_-,D,\rho,\Gamma_{-})^{\phi_-}\times_{(\ev_{\xi^-},\ev_{\xi^+})} \ov{\mc{M}}(X_+,D,\rho,\Gamma_{+})^{\phi_+},
\end{equation}
where $\ov{\mc{M}}(X_-,D,\rho,\Gamma_{-})^{\phi_-}$ and $\ov{\mc{M}}(X_+,D,\rho,\Gamma_{+})^{\phi_+}$ are the relative moduli spaces of real curves, possibly with disconnected domains, with the same intersection pattern $\rho$. 
Here $\xi^\pm$ are the contact points with $D$, $\tn{ev}_{\xi^\pm}$ are the evaluation maps at $\xi^\pm$, and $\Gamma_{\pm}$ encodes the data corresponding to the topological types of the domain and image; see \cite[Section 4]{MF1} for more details on the definition.
  
The moduli space $\ov{\mc{M}}(X,A,\left\{J_t\right\}_{t\in [0,1]})^{\phi}$ gives a cobordism between the moduli space of real curves $\ov{\mc{M}}(X,J_1,A)^{\phi}$ over a smooth fiber and the moduli space of real curves in the singular fiber. By \cite[Proposition 2.1]{MF1}, $c_1(TX_+)=-PD(D)$. Therefore, if the image of the maps in $\ov{\mc{M}}(X_+,D,\rho,\Gamma_{+})^{\phi_+}$ have homology class $B\in H_2(X_+,\Z)$ with $B\cdot D> 0$, then 
$$
\dim^{\vir}(\ov{\mc{M}}(X_+,D,\rho,\Gamma_{+})^{\phi_+}) \leq c_1(TX_+)(B) < 0.
$$
This implies that for ``generic" $J_0$, we should expect that the limit maps in $X_0$ to lie entirely in $X_+\setminus D$ or $X_-\setminus D$. On the other hand, note that the degree (or symplectic area) of every $J_-$-holomorphic map in $X_-$ is proportionate to its intersection number with $D$, because $D$ is the hyperplane class in $X_-$; therefore, every non-trivial $J_-$-holomorphic map in $X_-$ has non-trivial intersection with $D$. We conclude that for such generic $J_0$, the only non-empty terms in (\ref{equ:fiber-prod}) correspond to real $J_+$-holomorphic maps inside $X_+$. Since $\phi_+$ has no fixed points,  the proof then follows from the Gromov compactness theorem.

\vskip.1in
The argument for finding such generic $J_0$ is almost identical to that of \cite[Theorem 3.1.5]{MS2}. We provide the necessary adjustments. Let $\mc{J}_{D,w_+,\phi_+}$ be the space of almost complex structures in $\mc{J}_{w_+,\phi_+}$ which preserve\footnote{We can impose more regularity condition along $D$ and it does not affect the argument below. In fact we may even assume that $J_+$ is holomorphic in a neighborhood of $D$.} $TD$. Similar to the argument in the middle of \cite[Page 47]{MS2}, the tangent space  
$$
T_{J_+}\mc{J}_{D,w_+,\phi_+}
$$
consists of those $Y\in \tn{End}(TX_+)$ where
\begin{equation}\label{tangent-condition}
\aligned
&YJ_++J_+Y=0,\quad \om_+(Yv,w)+\om_+(v,Yw)=0, \\
 &\phi_+^*Y=-Y,\qquad \tn{and}\quad Y(TD)\subset TD.
 \endaligned
\end{equation}
The extra conditions in the second row correspond to compatibility with $\phi_+$ and $D$, respectively. In the following argument, we consider two types of moduli spaces. Fix some $B\in H_2(X_+,\Z)$ such that $B\cdot D>0$. For every $J_+\in \mc{J}_{D,w_+,\phi_+}$, let $\mc{M}^*(X_+,B,J_+)$ be the (ordinary) moduli space of somewhere injective $J_+$-holomorphic spheres of degree $B$ whose image, as a set in $X_+$, is ``not" invariant under the action of $\phi_+$. Every element of $\mc{M}^*(X_+,B,J_+)$ intersects $D$ in finite set of points with total multiplicity $B\cdot D$.  
Similarly, let $\mc{M}^*(X_+,B,J_+)^{\phi_+,\eta}$ be the moduli space of degree $B$ somewhere injective $(\phi_+,\eta)$-maps $u\colon (\P^1,\eta)\to(X_+,\phi_+)$.  
By adjusting the proof of \cite[Theorem 3.1.5]{MS2}, we prove the following proposition.

\begin{proposition}\label{empty-lemma}
For every $B\in H_2(X_+,\Z)$ with $B\cdot D>0$, there exists a  set of second category $\mc{J}^*_{D,w_+,\phi_+}\subset \mc{J}_{D,w_+,\phi_+}$ such that for every $J_+\in \mc{J}^*_{D,w_+,\phi_+}$, the moduli spaces $\mc{M}^*(X_+,B,J_+)$ and $\mc{M}^*(X_+,B,J_+)^{\phi_+,\eta}$ are empty.
\end{proposition}

\begin{proof}
In order to prove this proposition, we show that the proof of \cite[Proposition 3.2.1]{MS2} can be adjusted to the smaller set of almost complex structures $\mc{J}_{D,w_+,\phi_+}$ considered here.  
 
Set 
\begin{equation}\label{J+map}
\mc{M}^*(X_+,B,\mc{J}_{D,w_+,\phi_+})\equiv\bigcup_{J_+\in \mc{J}_{D,w_+,\phi_+}} \mc{M}^*(X_+,B,J_+).
\end{equation}
Let $u\colon \P^1\to X_+$ be a $J_+$-holomorphic map in  $\mc{M}^*(X_+,B,\mc{J}_{D,w_+,\phi_+})$. 
With notation similar to the proof of \cite[Proposition 3.2.1]{MS2}, we have to show that
 $$
\tn{D}\bar{\partial}_{u,J_+}\colon W^{1,p}(u^*TX_+)\times T_{J_+}\mc{J}_{D,w_+,\phi_+} \to W^0((T^*\P^1)^{0,1}\otimes  u^*TX_+)
 $$
 is surjective.
Assume, by contradiction, that there exists a non-trivial 
$$
\gamma \in L^q((T^*\P^1)^{0,1}\otimes  u^*TX_+),
$$  
with $1/p+1/q=1$, which annihilates the image of $\tn{D}\bar{\partial}_{u,J_+}$. Then $\gamma$ is of class $W^{1,p}$ as well. By assumption, the set of points $z\in \P^1$ where $u$ is injective (i.e. (\ref{swi}) holds), $u(z)\notin D$, and 
$$
\phi_+(u(z))\not\in \tn{image}(u)
$$
is an open dense subset of $\P^1$. Choose one such point $z\in \P^1$ such that $\gamma(z)\neq 0$. Then, as in the proof of \cite[Proposition 3.2.1]{MS2}, there exists some $Y\in \tn{End}(TX_+)$, supported in a  neighborhood $U\subset X_+$ of $u(z)$, such that
$$
U\cap D =\emptyset , \quad \phi_+(U) \cap \tn{image}(u)=\emptyset ,\quad
\int_{u^{-1}(U)}\left\langle \gamma, Y\circ \tn{d}u \circ j\right\rangle >0,
$$
and $Y$ satisfies the first two conditions of (\ref{tangent-condition}); here $j$ is the complex structure of $\P^1$. We replace $Y$ over $\phi_+(U)$ by $-\phi_+^*Y$ and denote the result by $Y'$. Then, $Y'$ satisfies all the conditions in 
(\ref{tangent-condition}) and
$$
\int_{\P^1}\left\langle \gamma, Y'\circ \tn{d}u \circ j\right\rangle =\int_{u^{-1}(U)} \left\langle \gamma, Y\circ \tn{d}u \circ j\right\rangle>0.
$$
Thus, $Y'$ is an element of $T_{J_+}\mc{J}_{D,w_+,\phi_+}$ which is not annihilated by $\gamma$; this is a contradiction.

Next, we consider
\begin{equation}\label{J+map2}
\mc{M}^*(X_+,B,\mc{J}_{D,w_+,\phi_+})^{\phi_+,\eta}=\bigcup_{J_+\in \mc{J}_{D,w_+,\phi_+}} \mc{M}^*(X_+,B,J_+)^{\phi_+,\eta}.
\end{equation} 
Let $u\colon (\P^1,\eta)\to (X_+,\phi_+)$ be a real $J_+$-holomorphic map in  $\mc{M}^*(X_+,B,\mc{J}_{D,w_+,\phi_+})^{\phi_+,\eta}$. The proof is similar, but involves the real version of Banach spaces considered above. With notation as in Section~\ref{kuranishi-structure}, we have to show that
 $$
\tn{D}\bar{\partial}_{u,J_+}\colon W^{1,p}(u^*TX_+)_\R \times T_{J_+}\mc{J}_{D,w_+,\phi_+} \to W^0((T^*\P^1)^{0,1}\otimes  u^*TX_+)_\R
 $$
 is surjective. Assume, by contradiction, that there exists some non-trivial $\gamma \in L^q((T^*\P^1)^{0,1}\otimes  u^*TX_+)_\R$, which annihilates the image of $\tn{D}\bar{\partial}_{u,J_+}$. By assumption, the set of points $z\in \P^1$ where $u$ is  injective and $u(z)\notin D$ is an open dense subset of $\P^1$. Choose one such point $z\in \P^1$ such that $\gamma(z)\neq 0$. Then, as in the proof of \cite[Proposition 3.2.1]{MS2}, there exists some $Y\in \tn{End}(TX_+)$, supported in a  neighborhood $U\subset X_+$ of $u(z)$, such that
$$
U\cap D =\emptyset , \quad \phi_+(U) \cap U=\emptyset ,\quad
\int_{u^{-1}(U)} \left\langle \gamma, Y\circ du \circ j\right\rangle >0,
$$
and $Y$ satisfies the first two conditions of (\ref{tangent-condition}). We replace $Y$ over $\phi_+(U)$ by $-\phi_+^*Y$ and denote the result by $Y'$. Then, $Y'$ satisfies all the conditions in 
(\ref{tangent-condition}) and
$$
\int_{\P^1}\left\langle \gamma, Y'\circ du \circ j\right\rangle = 2\int_{u^{-1}(U)} \left\langle \gamma, Y\circ du \circ j\right\rangle >0.
$$
Thus, $Y'$ is an element of $T_{J_+}\mc{J}_{D,w_+,\phi_+}$ which is not annihilated by $\gamma$; this is a contradiction.
 \end{proof}
 
 \begin{lemma}\label{This-or-that}
For some $J_+\in \mc{J}_{D,w_+,\phi_+}$, let $u\colon \P^1 \to X_+$ be a somewhere injective $J_+$-holomorphic map whose image, as a set, is invariant under the action of $\phi_+$. Then there exists an antiholomorphic involution $c$ on $\P^1$, conjugate to $\eta$, such that $u$ is a $(\phi_+,c)$-real map.
\end{lemma}

This lemma implies that every degree $B$ somewhere injective $J_+$-holomorphic sphere $u$, where $B\cdot D>0$, either belongs to  $\mc{M}^*(X_+,B,J_+)$ or can be enhanced to an element of $\mc{M}^*(X_+,B,J_+)^{\phi_+,\eta}$.

\begin{proof}
By assumption, outside a finite set of points $S\subset \P^1$, every $z\in \P^1\setminus S$ is a somewhere injective point and $u(\P^{1}\setminus S)$ is $\phi_+$-invariant. The involution  $\phi_+$ canonically lifts to an antiholomorphic  involution $c$ on $\P^{1}\setminus S$ with no fixed point. Then, every such involution has a unique extension across entire $\P^1$ which, after a reparametrization, is isomorphic to $\eta$.
\end{proof}

Let us come back to the proof of Theorem~\ref{zeroness}. By Gromov compactness theorem and in the light of Proposition~\ref{empty-lemma}, for every $E>0$, there exists $J_+\in \mc{J}_{D,w_+,\phi_+}$ such that for all $B\in H_2(X_+,\Z)$ with $B\cdot D>0$ and $\om(B)<E$, the moduli spaces $\mc{M}^*(X_+,B,J_+)$ and $\mc{M}^*(X_+,B,J_+)^{\phi_+,\eta}$ are empty.
For such $J_+$, assume by contradiction that there exists a non-trivial element $f$ in $\ov{\mc{M}}(X_+,D,\rho,\Gamma_{+})^{\phi_+}$ with homology class $B$ and $\om(B)<E$. This element should have a smooth component, i.e. a map over some $\P^1$, which is a multiple cover of some somewhere injective map with non-trivial homology class $B'$, $\om(B')>0$ and $B'\cdot D>0$. This somewhere injective map either belongs to $\mc{M}^*(X_+,B',J_+)$, or it belongs to $\mc{M}^*(X_+,B',J_+)^{\phi_+,\eta}$, or by Lemma~\ref{This-or-that}, it can be enhanced to an element of $\mc{M}^*(X_+,B',J_+)^{\phi_+,\eta}$. This is a contradiction to the assumption on $J_+$.

\vskip.1in
Starting from a $J_+$ as in Lemma~\ref{empty-lemma} and extending it to $J_{\mc{X}}$ on $\mc{X}$, in the light of Gromov compactness theorem, and the fact that a limit of $\tau$-maps has non-trivial components in $X_-$, the conclusion of previous paragraph implies that for some $t_0>0$, all the moduli spaces  $\{\ov{\mc{M}}(X,A,J_t)^{\phi,\eta}\}_{0<t<t_0}$, where $A$ is non-trivial and $\om(A)<E$, should be empty. This finishes the proof of Theorem~\ref{zeroness}. 
\qed

\part{Odd-dimensional projective spaces \tn{(joint with A.~Zinger)}}
\section{Orientations for the moduli spaces}\label{orientPn_sec}

We give an explicit description of real maps from~$\P^1$ to~$\P^{2m-1}$
in Section~\ref{Podd} and use it in Section~\ref{AGorient_subs}
to endow the moduli spaces of such maps with orientations.
In Section~\ref{deg1_subs}, we show that the sign of the diffeomorphism
$$\ev_1\!: \mc{M}_1(\P^{2m-1},1)^{\phi,c}\lra\P^{2m-1},\quad
\big[u,(z^+,z^-)\big]\lra u(z^+),$$
is $(-1)^{m-1}$ with the respect to the algebraic orientation of Section~\ref{AGorient_subs}
on the domain and the complex orientation in the target
whenever
$$(\phi,c)=(\tau_{2m-1},\tau),(\eta_{2m-1},\eta);$$
otherwise, the moduli space above is empty.
This is also the sign of the real line through a pair of conjugate points
with respect to these orientations.
In Section~\ref{evendeg_subs}, we focus on the even-degree maps and show 
the conclusion of Proposition~\ref{matching-orientation_prp} applies to 
the algebraic orientations of Section~\ref{AGorient_subs}; see Proposition~\ref{algbnd_prp}.
In Section~\ref{orient_subs}, we describe the canonical real square root structure on $K_{\P^{4m-1}}$
and spin structure on $\R\P^{4m-1}$ induced by the exact sequence~(\ref{Pnses_e})
and used to define the numbers~(\ref{numsPndfn_e}). 

The algebraic orientations and the orientations on the moduli spaces
arising from the structures of Section~\ref{orient_subs} are compared in
Corollary~\ref{comporient_crl};
its conclusions are summarized at the end of Section~\ref{AGorient_subs}.
Along with this corollary, Proposition~\ref{algbnd_prp} provides 
a direct verification of the claim of Proposition~\ref{matching-orientation_prp}
for $(\P^{4m-1},\tau_{4m-1})$ with
the  real square root structure and spin structure of Section~\ref{orient_subs}.

For the remainder of the paper, $c\!=\!\tau,\eta$ and $\phi\!=\!\tau_{2m-1},\eta_{2m-1}$.
Define
\begin{equation}\label{absphi}
|c|=\begin{cases}0,&\hbox{if}~c\!=\!\tau;\\
1,&\hbox{if}~c\!=\!\eta; \end{cases}
 \qquad
|\phi|=\begin{cases}0,&\hbox{if}~\phi\!=\!\tau_{2m-1};\\
1,&\hbox{if}~\phi\!=\!\eta_{2m-1}. \end{cases} 
\end{equation}
We identify $0\!\in\!\C$ and $\infty$ with $[1,0]\!\in\!\P^1$ and $[0,1]\!\in\!\P^1$,
respectively.

\subsection{Spaces of parametrized maps}\label{Podd}

For $m,d\!\in\!\Z^+$, the  space $\mc{P}_0(\P^{2m-1},d)^{\phi,c}$ 
of (parametrized) $(\phi,c)$-real  degree~$d$ holomorphic maps  
consists of maps of the~form 
\begin{equation}\label{P1polyn_e}
u\!:\P^1 \lra\ \P^{2m-1}, \quad [x,y]\lra\big[p_1(x,y),q_1(x,y),\ldots,p_m(x,y),q_m(x,y)\big],
\end{equation}
where $p_1,q_1,\ldots,p_m,q_m$ are degree~$d$ homogeneous  polynomials 
in two variables without common factor which satisfy some compatibility properties. 
Suppose 
$$p_i(x,y)=A_i\!\prod_{r=1}^d\!\big(a_{i;r}x\!+\!(-1)^{|c|}b_{i;r}y\big), \quad
 q_i(x,y)=\bar{B}_i\!\prod_{r=1}^d\!\big(a_{i;r}'x\!+\!b_{i;r}'y\big).$$
The condition $u\!\circ\!c\!=\!\phi\!\circ\!\tau$ is then equivalent 
to the existence of $\zeta\!\in\!\C^*$ such~that 
\begin{equation*}\begin{split}
(-1)^{|\phi|}B_i\big(\bar{a}_{i;r}',\bar{b}_{i;r}'\big)&=
\zeta\cdot(-1)^{|c|d}A_i\big(b_{i;r},a_{i;r}\big),\\
\bar{A}_i\big(\bar{a}_{i;r},(-1)^{|c|}\bar{b}_{i;r}\big)&=
\zeta\cdot \bar{B}_i\big(b_{i;r}',(-1)^{|c|}a_{i;r}'\big)
\end{split}\end{equation*}
for all~$i$ and~$r$.
These two requirements are in turn equivalent to
\begin{equation}\label{cphicond_e}|\phi|\!+\!|c|d \in 2\Z, \quad |\zeta|=1, \quad
\ze\bar{B}_i\big(a_{i;r}',b_{i;r}'\big)=\bar{A}_i\big(\bar{b}_{i;r},\bar{a}_{i;r}\big)
~~\forall~i,r.
\end{equation}

\begin{proof}[Proof of Lemma~\ref{new-lemma}]
For $c\!=\!\eta$, the first condition in~(\ref{cphicond_e}) becomes $|\phi|\!+\!d\!\in\!2\Z$.
If it is not satisfied, the space $\mc{P}_0(\P^{2m-1},d)^{\phi,\eta}$ of parametrized maps is empty.
This immediately implies that the stratum of the moduli space $\mc{M}_l(\P^{2m-1},d)^{\phi,\eta}$
consisting of smooth maps is empty.
The claims of Lemma~\ref{new-lemma} are then obtained by observing that any map in a boundary stratum
contains a real map from $(\P^1,\eta)$ with the degree of the same parity as~$d$.
\end{proof}

From~(\ref{cphicond_e}),  we obtain the following observation.

\begin{lemma}\label{realcond_lmm}
Suppose $c\!=\!\tau,\eta$, $\phi\!=\!\tau_{2m-1},\eta_{2m-1}$, and $d\!\in\!\Z^+$
are such that $|\phi|\!+\!|c|d\!\in\!2\Z$.
The map~(\ref{P1polyn_e}) is $(\phi,c)$-real if and only if
$$p_i(x,y)=A_i\!\prod_{r=1}^d\!\big(a_{i;r}x\!+\!(-1)^{|c|}b_{i;r}y\big), \quad
 q_i(x,y)=\bar{B}_i\!\prod_{r=1}^d\!\big(\bar{b}_{i;r}x\!+\!\bar{a}_{i;r}y\big),$$
for some $A_i,B_i\!\in\!\C$ and $[a_{i;r},b_{i;r}]\!\in\!\P^1$ such that 
$$|A_i|=|B_i|~~~\forall\,i=1,\ldots,m, \qquad [A_1,\ldots,A_m]=[B_1,\ldots,B_m]\in\P^{m-1},$$
i.e.~$(B_1,\ldots,B_m)\!=\!\zeta(A_1,\ldots,A_m)$ for some $\zeta\!\in\!S^1\!\subset\!\C$.
\end{lemma}

For $a,b\!\in\!\C$, define
$$p_{a,b}\!:\C^2\lra\C, \qquad p_{a,b}(x,y)= ax\!+\!by\,.$$
Taking $m\!=\!1$, $\phi\!=\!c$, and $d\!=\!1$ in Lemma~\ref{realcond_lmm}, 
we find that 
\begin{alignat*}{2}
\big\{(a,b)\!\in\!\C^2\!:\,|a|\!\neq\!|b|\big\}/\R^*&\lra G_{\tau}\equiv\tn{Aut}(\P^1,\tau), &\quad
[a,b]&\lra\big[p_{a,b},p_{\bar{b},\bar{a}}\big],\\
\big\{(a,b)\!\in\!\C^2\!-\!0\big\}/\R^*&\lra G_{\eta}\equiv\tn{Aut}(\P^1,\eta), &\quad
[a,b]&\lra\big[p_{a,-b},p_{\bar{b},\bar{a}}\big].
\end{alignat*}
are diffeomorphisms.
In particular, $G_{\tau}$ has two topological components, with the automorphism 
$$\vt\!: \P^1\lra\P^1, \qquad  \vt\big([x,y]\big)=[y,x],$$
contained in the non-identity component.

\subsection{Algebraic orientations}
\label{AGorient_subs}

For $m,d\!\in\!\Z^+$, let $\De_{m,d}^{\eta}\!=\!\emptyset$ and
\begin{equation*}\begin{split}
\De_{m,d}^{\tau}&=
\big\{([b_{1;1},\ldots,b_{1;d}],\ldots,[b_{m;1},\ldots,b_{m;d}])\in(\tn{Sym}^d\C)^m\!:\\
&\hspace{1.8in}S^1\cap\bigcap_{i=1}^m\{b_{i;r}\!:\,r\!=\!1,\ldots,d\}\neq\emptyset\big\}.
\end{split}\end{equation*}
We identify $\R\P^{2m-1}$ with $(\C^m\!-\!\{0\})/\R^*$, viewing the $i$-th complex coordinate
as the $(2i\!-\!1)$ and $2i$-th real components.

Suppose $c\!=\!\tau,\eta$, $\phi\!=\!\tau_{2m-1},\eta_{2m-1}$, and $d\!\in\!\Z^+$
are such that $|\phi|\!+\!|c|d\!\in\!2\Z$.
By Lemma~\ref{realcond_lmm}, the~map
\begin{gather*}
\Th_c\!:
\big((\tn{Sym}^d\C)^m\!-\!\De_{m,d}^c\big)\times \R\P^{2m-1}\lra\mc{P}_0(\P^{2m-1},d)^{\phi,c}\,,\\
\begin{split}
&\big([b_{1;1},\ldots,b_{1;d}],\ldots,[b_{m;1},\ldots,b_{m;d}],[A_1,\ldots,A_m]\big)\\
&\qquad\lra\bigg[A_1\!\prod_{r=1}^d\!p_{1,(-1)^{|c|}b_{1;r}},
\bar{A}_1\!\prod_{r=1}^d\!p_{\bar{b}_{1;r},1},\ldots, 
A_m\!\prod_{r=1}^d\!p_{1,(-1)^{|c|}b_{m;r}}, \bar{A}_m\!\prod_{r=1}^d\!p_{\bar{b}_{m;r},1}\bigg],
\end{split}\end{gather*}
is a diffeomorphism over the open subset of $\mc{P}_0(\P^{2m-1},d)^{\phi,c}$ 
consisting of maps~$u$ such that $u([1,0])$ does not lie in any of 
the coordinate subspaces of~$\P^{2m-1}$.
Since the complement of this subspace is of codimension~2, $\Th_c$ induces
an orientation on  $\mc{P}_0(\P^{2m-1},d)^{\phi,c}$.
The~map
\begin{gather*}
\Th_c'\!:
\big((\tn{Sym}^d\C)^m\!-\!\De_{m,d}^c\big)\times \R\P^{2m-1}\lra\mc{P}_0(\P^{2m-1},d)^{\phi,c}\,,\\
\begin{split}
&\big([a_{1;1},\ldots,a_{1;d}],\ldots,[a_{m;1},\ldots,a_{m;d}],[B_1,\ldots,B_m]\big)\\
&\qquad\lra \bigg[\bar{B}_1\!\prod_{r=1}^d\!p_{\bar{a}_{1;r},(-1)^{|c|}},
B_1\!\prod_{r=1}^d\!p_{1,a_{1;r}},\ldots,
\bar{B}_m\!\prod_{r=1}^d\!p_{\bar{a}_{m;r},(-1)^{|c|}},
B_m\!\prod_{r=1}^d\!p_{1,a_{m;r}}\bigg],
\end{split}\end{gather*}
is also a diffeomorphism over this open subset of $\mc{P}_0(\P^{2m-1},d)^{\phi,c}$.
The  two diffeomorphisms induce
the same orientation on $\mc{P}_0(\P^{2m-1},d)^{\phi,c}$ 
if and only if \hbox{$(d\!+\!1)m\!\in\!2\Z$}.
In particular, the two orientations are the same if $d\!\not\in\!2\Z$.

The action of the automorphism $\vt$ of $(\P^1,\tau)$ lifts over $\Th$ and~$\Th'$~as 
\begin{gather*}
\big((\tn{Sym}^d\C)^m\!-\!\De_{m,d}^{\tau}\big)\times \R\P^{2m-1}\lra
\big((\tn{Sym}^d\C)^m\!-\!\De_{m,d}^{\tau}\big)\times \R\P^{2m-1}\,,\\
\begin{split}
&\big([b_{1;1},\ldots,b_{1;d}],\ldots,[b_{m;1},\ldots,b_{m;d}],[A_1,\ldots,A_m]\big)\\
& \quad\lra\big([b_{1;1}^{-1},\ldots,b_{1;d}^{-1}],\ldots,[b_{m;1}^{-1},\ldots,b_{m;d}^{-1}],
[A_1b_{1;1}\ldots b_{1;d},\ldots,A_mb_{m;1}\ldots b_{m;d}]\big).
\end{split}\end{gather*}
This lift is orientation-preserving.
Since the group~$G_{\tau}$ has two topological components, with~$\vt$ contained in the non-identity component,
and the group~$G_{\eta}$ is connected, it follows that the above orientations descend
to orientations on the quotient
$$\mc{M}_0(\P^{2m-1},d)^{\phi,c}=\mc{P}_0(\P^{2m-1},d)^{\phi,c}/G_c\,.$$ 
This implies that the moduli space $\mc{M}_l(\P^{2m-1},d)^{\phi,c}$ is orientable for all
$c,\phi$ and~$m,d,l$.

Let $\pi\!:\C^m\!-\!0\!\lra\!\R\P^{2m-1}$ be the projection map.
The standard action of $\R^*$ on $\C^m\!-\!0$ determines an isomorphism
$$\La_{\R}^{\tn{top}}\C^m\big|_{\C^m-0}\approx 
\pi^*\La_{\R}^{\tn{top}}\big(\R\P^{2m-1}\big)\otimes_{\R}\R$$
of real line bundles over $\C^m\!-\!0$.
We orient $\R\P^{2m-1}$ from the standard orientations of $\C^m$
and~$\R^+$ via this isomorphism.
Thus, $v_1,\ldots,v_{2m-1}\in\!T_w\C^m$ descends to an oriented basis for $\R\P^{2m-1}$
if $v_1,\ldots,v_{2m-1},w$ is an oriented basis for~$\C^m$.
For example, an oriented pair of vectors in each of  $m\!-\!1$ of the complex components of~$\C^m$
and the negative rotation in the remaining component determine 
an oriented basis on $\R\P^{2m-1}$.
In particular, the covering projection
$$S^1\lra \R\P^1\!=\!\C^*/\R^*, \qquad \ze\lra [\ze],$$
is orientation-reversing with respect to the standard orientation on $S^1\!\subset\!\C$
and our orientation on~$\R\P^1$.

We will call the orientations on $\mc{P}_0(\P^{2m-1},d)^{\phi,c}$  and
$\mc{M}_l(\P^{2m-1},d)^{\phi,c}$ induced by~$\Th_c$
\textsf{the algebraic orientations};
they agree with the orientations induced by~$\Th_c'$ unless $d\!\in\!2\Z$ and 
$m\!\not\in\!2\Z$.
By Corollary~\ref{comporient_crl}, the algebraic orientation of $\mc{M}_l(\P^{2m-1},d)^{\phi,c}$~is
\begin{enumerate}[label=$\bullet$,leftmargin=*]

\item the opposite of the orientation induced by
the spin structure on $\R\P^{2m-1}$ described in Section~\ref{orient_subs}
if $m\!\in\!2\Z$, $d\!\not\in\!2\Z$, and  \hbox{$(\phi,c)\!=\!(\tau_{2m-1},\tau)$},

\item the opposite of the orientation induced by
the real square root of $K_{\P^{2m-1}}$  described in Section~\ref{orient_subs}
if $m\!\in\!2\Z$, $d\!\not\in\!2\Z$, and  $(\phi,c)\!=\!(\eta_{2m-1},\eta)$,

\item the same as the orientation induced by
the relative spin structure on $\R\P^{2m-1}$ described at the end of 
Section~\ref{signpf_subs}
if $m\!\not\in\!2\Z$, $d\!\not\in\!2\Z$, and  $(\phi,c)\!=\!(\tau_{2m-1},\tau)$,

\item  the same as the orientation induced by
the spin sub-structure on the real line bundle $K_{\P^{2m-1}}$  
described at the end of Section~\ref{signpf_subs}
if $m\!\not\in\!2\Z$, $d\!\not\in\!2\Z$, and  \hbox{$(\phi,c)\!=\!(\eta_{2m-1},\eta)$}.
\end{enumerate}

\subsection{Moduli spaces of degree 1 maps}
\label{deg1_subs}

We will next  note some properties of the algebraic orientation on 
$\mc{M}_l(\P^{2m-1},1)^{\phi,c}$.

\begin{lemma}\label{orientGc_lmm}
Let $c\!=\!\tau,\eta$. 
\begin{enumerate}[label=(\arabic*),leftmargin=*]

\item The algebraic orientation on $G_c\!=\!\mc{P}_0(\P^1,1)^{c,c}$ is
the opposite of the canonical orientation specified 
at the beginning of Section~\ref{ch:realcurves}.
 
\item With respect to the algebraic orientation, $\mc{M}_0(\P^1,1)^{c,c}$ 
is a single positive point. 

\end{enumerate}
\end{lemma}

\begin{proof}
For $m\!=\!1$ and $d\!=\!1$, the map~$\Th_c$ determining
the algebraic orientation reduces~to
\begin{gather*}
\big(\C\!-\!\De_{1,1}^c\big)\!\times\!\R\P^1\lra \mc{P}_0(\P^1,1)^{c,c},\\
\big(b,[\ne^{\fI\th}]\big)\lra \frac{\ne^{-\fI\th}}{\ne^{\fI\th}}\cdot
\frac{\bar{b}x\!+\!y}{x\!+\!(-1)^{|c|}by}
=\ne^{-2\fI\th}\cdot \frac{z\!+\!\bar{b}}{1\!+\!(-1)^{|c|}bz}\,.
\end{gather*}
At $(0,[1])$, the left-hand side above is oriented by the complex orientation of~$\C$
and the negative $\th$-direction.
The right-hand side is oriented by the complex orientation of~$a$
and the positive $\th$-direction in 
$$\big(a,\ne^{\fI\th}\big)\lra \ne^{\fI\th}\cdot \frac{z\!+\!a}{1\!+\!(-1)^{|c|}\bar{a}z}\,;$$
see the beginning of Section~\ref{ch:realcurves}.
Thus, the first map above is orientation-reversing
(orientation-reversing on~$\C$ and orientation-preserving on~$\th$). 
This establishes the first claim.
The second claim of this lemma follows from the first and Lemma~\ref{ptorient_lmm}.
\end{proof}

With $c$, $\phi$, and $m$ as above, let 
$$\ev_0\!: \mc{P}_0(\P^{2m-1},1)^{\phi,c}\lra \P^{2m-1}, \qquad u\lra u(0),$$
denote the evaluation at $0\!\in\!\P^1$.
Let
$$\cN^m\P\equiv \frac{T\P^{2m+1}\big|_{T\P^{2m-1}}}{T\P^{2m-1}}
\quad\hbox{and}\quad
\cN^m\mc{P}\equiv 
\frac{T\mc{P}_0(\P^{2m+1},1)^{\phi,c}\big|_{\mc{P}_0(\P^{2m-1},1)^{\phi,c}}}
{T\mc{P}_0(\P^{2m-1},1)^{\phi,c}}$$
denote the normal bundle of $\P^{2m-1}$ in $\P^{2m+1}$ and the normal bundle of 
$\mc{P}_0(\P^{2m-1},1)^{\phi,c}$ in $\mc{P}_0(\P^{2m+1},1)^{\phi,c}$,
respectively.
The complex orientations on the projective spaces induce an orientation on~$\cN^m\P$.
If $|\phi|\!=\!|c|$, the algebraic orientations on the spaces of parametrized maps
induce an orientation on~$\cN^m\mc{P}$.
The differential of~$\ev_0$ descends to an isomorphism
\begin{equation}\label{cNisom_e}
\tn{d}\ev_0\!: \cN^m\mc{P} \lra \ev_0^*\cN^m\P\,.
\end{equation}

\begin{lemma}\label{cNisom_lmm}
Let $c\!=\!\tau,\eta$ and $\phi\!=\!\tau_{2m-1},\eta_{2m-1}$.
If $|\phi|\!=\!|c|$, the isomorphism~(\ref{cNisom_e}) is orientation-reversing
with respect to the algebraic orientation on the domain and the complex orientation
on the target.
\end{lemma}

\begin{proof}
It is sufficient to establish the claim near the image~$u_0$ of 
$$\big(0,\ldots,0,[1,0,\ldots,0]\big) \in\C^m\!\times \R\P^{2m-1}$$
under~$\Th_c$.
The left-hand side in~(\ref{cNisom_e}) is then oriented by the complex orientations
of~$A_{m+1}$ and $b_{m+1}\!\equiv\!b_{m+1;1}$.
Near~$u_0$, the map~$\ev_0$ between the normal neighborhoods can be written~as
$$\C^2\lra \C^2, \qquad \big(A_{m+1},b_{m+1}\big)\lra \big(A_{m+1},\bar{A}_{m+1}\bar{b}_{m+1}\big).$$
This map is orientation-reversing near~$u_0$. 
\end{proof}

The moduli spaces 
\begin{equation*}\begin{split}
\ov{\mc{M}}_0(\P^{2m-1},1)^{\tau_{2m-1}}&=\mc{M}_0(\P^{2m-1},1)^{\tau_{2m-1},\tau} \qquad\hbox{and}\\
\ov{\mc{M}}_0(\P^{2m-1},1)^{\eta_{2m-1}}&=\mc{M}_0(\P^{2m-1},1)^{\eta_{2m-1},\eta}
\end{split}\end{equation*}
are compact manifolds.
Using the algebraic orientations on these spaces, we can thus define the numbers
$$\wt{N}_1^{\phi}(2m\!-\!1)=\int_{[\ov{\mc{M}}_1(\P^{2m-1},1)^{\phi}]} \ev_0^*H^{2m-1}$$
for $\phi\!=\!\tau_{2m-1},\eta_{2m-1}$;
they are signed counts of real lines in $\P^{2m-1}$ passing through a pair of conjugate points.

\begin{corollary}\label{alNums_crl}
If $\phi\!=\!\tau_{2m-1},\eta_{2m-1}$, then 
$$\wt{N}_1^{\phi}(2m\!-\!1)=(-1)^{m-1}.$$
\end{corollary}

\begin{proof}
It is sufficient to show that 
\begin{equation}\label{alNums_e}
\wt{N}_1^{\phi}(1)=1, \qquad  \wt{N}_1^{\phi}(2m\!+\!1)=-\wt{N}_1^{\phi}(2m\!-\!1).
\end{equation}
The map~$u_0$ in the proof of Lemma~\ref{cNisom_lmm} is the only element of
$\mc{M}_0(\P^{2m-1},1)^{\phi}$ passing through the point 
$$P_1\equiv[1,0,\ldots,0]\in \P^{2m-1}.$$
The sign of this element is the sign of the isomorphism
\begin{equation}\label{alNums_e2}
\tn{d}_{[u_0,0]}\ev_1\!: 
T_{[u_0,0]}\mc{M}_1(\P^{2m-1},1)^{\phi}\lra T_{P_1}\P^{2m-1}.
\end{equation}
The orientation on the domain of this map is obtained via the exact sequence
\begin{equation}\label{alNums_e3}
0\lra T_0\P^1\lra T_{[u_0,0]}\mc{M}_1(\P^{2m-1},1)^{\phi}
\lra T_{[u_0]}\mc{M}_0(\P^{2m-1},1)^{\phi}\lra0
\end{equation}
from the algebraic orientation of $\mc{M}_0(\P^{2m-1},1)^{\phi}$ and 
the complex orientation of~$\P^1$.
By the second statement of Lemma~\ref{orientGc_lmm}, the first arrow in~(\ref{alNums_e3})
is an orientation-preserving isomorphism if $m\!=\!1$.
Since its composition with~(\ref{alNums_e2}) is the identity if $m\!=\!1$, 
the first claim in~(\ref{alNums_e}) holds.

Let $\cN^m\mc{M}$ denote the normal bundle of $\mc{M}_1(\P^{2m-1},1)^{\phi}$ 
in $\mc{M}_1(\P^{2m+1},1)^{\phi}$.
The differential~(\ref{alNums_e2}) induces a commutative diagram
$$\xymatrix{ 0 \ar[r]& T_{[u_0,0]}\mc{M}_1(\P^{2m-1},1)^{\phi} \ar[r]\ar[d]^{\tn{d}_{[u_0,0]}}&
T_{[u_0,0]}\mc{M}_1(\P^{2m+1},1)^{\phi} \ar[r]\ar[d]^{\tn{d}_{[u_0,0]}}& 
\cN_{[u_0]}^m\mc{M}\ar[r]\ar[d]^{\tn{d}_{[u_0,0]}}& 0\\
0 \ar[r]&  T_{P_1}\P^{2m-1}\ar[r]&  T_{P_1}\P^{2m+1} \ar[r]& \cN_{P_1}^m\P\ar[r]& 0.}$$
The second   claim in~(\ref{alNums_e}) is equivalent to the isomorphism given by 
the last vertical arrow above being orientation-reversing.
The projection
$$\mc{P}_0(\P^{2m+1},1)^{\phi}\lra  \mc{M}_1(\P^{2m+1},1)^{\phi}, \qquad
u\lra[u,0],$$
pulls back this isomorphism to the isomorphism~~(\ref{cNisom_e}) at~$[u_0]$.
The latter is orientation-reversing by Lemma~\ref{cNisom_lmm}.
\end{proof}

\subsection{Moduli spaces of even degree  maps}
\label{evendeg_subs}

Since $\tn{Fix}(\eta_{2m-1})\!=\!\emptyset$,
$$\mc{M}_l(\P^{2m-1},d)^{\eta_{2m-1},\tau}=\emptyset \qquad\forall~d\!\in\!\Z.$$
By Lemma~\ref{new-lemma},  
$$\mc{M}_l(\P^{2m-1},d)^{\eta_{2m-1},\eta}=\emptyset \qquad\forall~d\!\in\!2\Z.$$
On the other hand, Lemma~\ref{realcond_lmm} implies that the moduli spaces
\begin{equation}\label{tauspace_e}
\mc{M}_l(\P^{2m-1},2d)^{\tau_{2m-1},\tau} \qquad\hbox{and}\qquad
\mc{M}_l(\P^{2m-1},2d)^{\tau_{2m-1},\eta}
\end{equation}
are both non-empty for all $d\!\in\!\Z^+$.
They have common codimension-one boundary  
\begin{equation}\label{tauspace_e2}
\partial^1 \ov{\mc{M}}_l(\P^{2m-1},2d)^{\tau_{2m-1},\tau}
=\partial^1\ov{\mc{M}}_l(\P^{2m-1},2d)^{\tau_{2m-1},\eta}
\end{equation}
consisting of real maps from a wedge of two copies of~$\P^1$ interchanged by 
an orientation-reversing involution;
the corresponding image curves then have an isolated real node.
The first moduli space also has a boundary component 
consisting of real maps from a wedge of two copies of~$\P^1$ with an involution
preserving each copy;
the corresponding image curves then have a non-isolated real node.
By the next statement, the conclusion of Proposition~\ref{matching-orientation_prp} applies 
to the algebraic orientations on the moduli spaces~(\ref{tauspace_e}). 

\begin{proposition}\label{algbnd_prp}
For all $d\!\in\!\Z^+$, the orientations on the codimension~1 boundary~(\ref{tauspace_e2})
induced by the algebraic orientations on the moduli spaces~(\ref{tauspace_e})
are the same.
\end{proposition}

\begin{proof} It is sufficient to consider the case $l\!=\!1$.
Let 
$$\P_{2m}^{2m-1}\equiv \big\{[Z_1,\ldots,Z_{2m}]\!\in\!\P^{2m-1}\!:\,Z_{2m}\!=\!0\big\}.$$
For $c\!=\!\tau,\eta$, define 
$$\mc{M}_{\bu}(\P^{2m-1},2d)^{\tau_{2m-1},c}=
\big\{[u,0]\!\in\!\mc{M}_1(\P^{2m-1},2d)^{\tau_{2m-1},c}\!:\,u(0)\!\in\!\P_{2m}^{2m-1}\big\}$$
and let 
$$\mc{P}_{\bu}(\P^{2m-1},2d)^{\tau_{2m-1},c}\subset \mc{P}_0(\P^{2m-1},2d)^{\tau_{2m-1},c}$$
be the preimage of $\mc{M}_{\bu}(\P^{2m-1},2d)^{\tau_{2m-1},c}$ under the projection
$$ \mc{P}_0(\P^{2m-1},2d)^{\tau_{2m-1},c}\lra  \mc{M}_1(\P^{2m-1},2d)^{\tau_{2m-1},c},
\qquad u\lra [u,0].$$
The action of the subgroup $S^1\!\subset\!G_c$ of rotations around~0 restricts to 
an action  on $\mc{P}_{\bu}(\P^{2m-1},2d)^{\tau_{2m-1},c}$ and
\begin{equation}\label{wtcMc_e}\begin{split}
\mc{M}_{\bu}(\P^{2m-1},2d)^{\tau_{2m-1},c}&=
\mc{P}_{\bu}(\P^{2m-1},2d)^{\tau_{2m-1},c}\big/S^1.
\end{split}\end{equation}

Let $\mc{P}_0(\P^{2m-1},d)$ denote the space of  (parametrized)  degree~$d$ 
holomorphic maps~$u$ $\P^1\!\lra\!\P^{2m-1}$.
Define 
\begin{equation*}\begin{split}
\mc{M}_{\bu}(\P^{2m-1},d)_{\R}=
\big\{[u,0,\infty]\!\in\!\mc{M}_2(\P^{2m-1},d)\!:~&u(0)\!\in\!\P^{2m-1}_{2m},\\
&u(\infty)\!\in\!\tn{Fix}(\tau_{2m-1})\big\}
\end{split}\end{equation*}
and let
$$\mc{P}_{\bu}(\P^{2m-1},d)_{\R}\subset \mc{P}_0(\P^{2m-1},d)$$
be the preimage of $\mc{M}_{\bu}(\P^{2m-1},d)_{\R}$ under the projection
\begin{equation}\label{PCproj_e}
\mc{P}_0(\P^{2m-1},d)\lra  \mc{M}_2(\P^{2m-1},d), \qquad u\lra [u,0,\infty].
\end{equation}
Thus,
\begin{equation}\label{Z2qu_e}\begin{split}
&\partial^1 \ov{\mc{M}}_{\bu}(\P^{2m-1},2d)^{\tau_{2m-1},\tau},
\partial^1\ov{\mc{M}}_{\bu}(\P^{2m-1},2d)^{\tau_{2m-1},\eta}
= \mc{M}_{\bu}(\P^{2m-1},d)_{\R}\,.
\end{split}\end{equation}
The actions of the subgroup $S^1\!\subset\!\tn{PSL}(2,\C)$ of rotations around~0
and the subgroup $\R^+\!\subset\!\tn{PSL}(2,\C)$ of scaling from~$\infty$
restrict to  actions  on $\mc{P}_{\bu}(\P^{2m-1},d)_{\R}$ and
$$\mc{M}_{\bu}(\P^{2m-1},d)_{\R}=
\mc{P}_{\bu}(\P^{2m-1},d)_{\R}\big/(\R^+\!\times\!S^1).$$
Let
\begin{gather*}
\fL^+=\mc{P}_{\bu}(\P^{2m-1},d)_{\R}\!\times_{\R^+}\!\ov{\R^+}\lra 
\mc{P}_{\bu}(\P^{2m-1},d)_{\R}\big/\R^+, \\
\fL^+(1)= 
\big\{\big[u,\ep\big]\!\in\!\fL^+\!:\,\ep\!<\!1\big\}, ~~
\fL_0^+(1)= 
\big\{\big[u,\ep\big]\!\in\!\fL^+\!:\,\ep\!\in\!(0,1)\big\}.
\end{gather*}

Let 
$$[d]=\big\{1,\ldots,d\big\}.$$
For $\ep\!\in\!\R$ and $\b\!\in\!(\tn{Sym}^d\C)^{2m}$, let $\ep\b$ be the element
of $(\tn{Sym}^d\C)^{2m}$ obtained from~$\b$ by  multiplying all coordinates of~$\b$ by~$\ep$.
The group $\R^+$ acts on $(\tn{Sym}^d\C)^{2m}$~by 
$$\R^+\!\times\!\big(\tn{Sym}^d\C\big)^{2m} \lra\big(\tn{Sym}^d\C\big)^{2m}, \qquad
(\ze,\b)\lra \ze^{-1}\b.$$
Let 
$$\wt\cL^+=\big(\tn{Sym}^d\C\big)^{2m}\!\times_{\R^+}\!\ov{\R^+}\lra 
\big(\tn{Sym}^d\C\big)^{2m}/\R^+\,.$$
We define
\begin{equation*}\begin{split}
\R\P^{2m-1}_{\bu}&=(\C^*)^m/\R^*\subset \R\P^{2m-1}\,,\\
(\tn{Sym}^d\C)_{\bu}^{2m}&=
\big\{\big[(b_{i;r})_{r\in[d]}\big]_{i\in[2m]}\!\in\!(\tn{Sym}^d\C)^{2m}\!:
b_{2m;m+1}\!\ldots\!b_{2m;d}\!=\!0,\\
&\hspace{.2in}
\big|b_{i;r}\big|\!<\!1~\forall\,i,r,~
\bigcap_{i=1}^m\{b_{i;r}\!:r\!\in\![d]\}\cap
\bigcap_{i=1}^m\{b_{m+i;r}\!:r\!\in\![d]\}=\emptyset\big\}.
\end{split}\end{equation*}
The standard action of $S^1\!\subset\!\C$ on~$\C$ and the trivial action on $\R\P^{2m-1}_{\bu}$
induce an action~on
$$W_{\bu}\equiv (\tn{Sym}^d\C)_{\bu}^{2m}\!\times\!\R\P^{2m-1}_{\bu}\,.$$
Given an element $(\b,[\A])$ of~$W_{\bu}$ and $\ze\!\in\!\R^+$ sufficiently close to~1
(depending on $(\b,[\A])$), we define $\ze\!\cdot\!(\b,[\A])$ to be the element 
obtained by multiplying the components of~$\b$ by~$\ze^{-1}$.
Let $W_{\bu}'$ be the quotient of $W_{\bu}$ by the resulting equivalence relation.
Define
\begin{gather*}
\wt\fL^+=\wt\cL^+\!\times\!\R\P^{2m-1}_{\bu}\big|_{W_{\bu}'}, \qquad
\wt\fL_0^+=\wt\fL^+-W_{\bu}'\,,\\
\Phi\!:\wt\fL_0^+(1)\!\equiv\!
\big\{\big[\b,[\A],\ep\big]\!\in\!\wt\fL_0^+\!:\,\ep\!<\!1\big\}\lra W_{\bu}, 
\quad \Phi\big(\big[\b,[\A],\ep\big]\big)=\big(\ep\b,[\A]\big).
\end{gather*}

For $c\!=\!\tau,\eta$, the $S^1$-equivariant map
\begin{gather*}
\wh\Th_c\!:W_{\bu} \lra\mc{P}_{\bu}(\P^{2m-1},2d)^{\phi,c}\,,\\
\begin{split}
&\big(\big[(b_{i;r})_{r\in[d]}\big]_{i\in[2m]},\big[(A_i)_{i\in[m]}\big]\big)\\
&\hspace{.5in}\lra\bigg[\bigg(\!\!
A_i\!\prod_{r=1}^d\!p_{b_{i;r},1}p_{1,(-1)^{|c|}\bar{b}_{m+i;r}},
\bar{A}_i\!\prod_{r=1}^d\!p_{1,(-1)^{|c|}\bar{b}_{i;r}}p_{b_{m+i;r},1}
\!\!\bigg)_{\!\!i=1,\ldots,m}\bigg],
\end{split}\end{gather*}
is a diffeomorphism onto an open subset of the target and induces an orientation
on the latter from the canonical orientation of~$W_{\bu}$.
The induced orientation on the left-hand side of~(\ref{wtcMc_e})
is the orientation induced by the algebraic orientation on    
$\mc{M}_1(\P^{2m-1},2d)^{\tau_{2m-1},c}$ and the complex orientation on~$\P^{2m-1}_{2m}$. 
We will call this orientation the algebraic orientation as well.

The smooth map
\begin{gather*}
\wh\Th\!:  W_{\bu}\lra\mc{P}_{\bu}(\P^{2m-1},d)_{\R}\,,\\
\begin{split}
&\big(\big[(b_{i;r})_{r\in[d]}\big]_{i\in[2m]},\big[(A_i)_{i\in[m]}\big]\big)
\lra\bigg[\bigg(\!\!
A_i\!\prod_{r=1}^d\!p_{b_{i;r},1},\bar{A}_i\!\prod_{r=1}^d\!p_{b_{m+i;r},1}
\!\!\bigg)_{\!\!i=1,\ldots,m}\bigg],
\end{split}\end{gather*}
commutes with the $S^1$-actions and the local $\R^+$-actions.
Thus, it lifts to an $S^1$-equivariant diffeomorphism
$$\wt\Th\!:  \wt\fL^+\lra \fL^+\big|_{\wh\Th(W_{\bu})}$$
which is a linear isometry  on the fibers.
For $c\!=\!\tau,\eta$, define
\begin{gather*}
\Psi_c\!: \fL_0^+(1)\big|_{\wh\Th(W_{\bu})}\lra  \mc{P}_{\bu}(\P^{2m-1},2d)^{\tau_{2m-1},c}  \qquad\hbox{by}\\
\Psi_c\big(\wt\Th\big(\big[\b,[\A],\ep\big]\big)\big)=
\wh\Th_c\big(\Phi\big(\big[\b,[\A],\ep\big]\big)\big).
\end{gather*}
Thus, the diagram of~$S^1$-equivariant maps
$$\xymatrix{  & \wt\fL_0^+(1) \ar@/_2pc/[ldd]_{\Phi}  \ar@/^2pc/[rdd]^{\wt\Th} &\\
 & \mc{P}_{\bu}(\P^{2m-1},2d)^{\tau_{2m-1},\tau}& \\
W_{\bu} \ar[ru]_{\wh\Th_{\tau}} \ar[rd]^{\wh\Th_{\eta}}&& 
\fL_0^+(1)|_{\wh\Th(W_{\bu})}   \ar[lu]^{\Psi_{\tau}} \ar[ld]_{\Psi_{\eta}}\\
&  \mc{P}_{\bu}(\P^{2m-1},2d)^{\tau_{2m-1},\eta} }$$
commutes.

For each $(\b,[\A])\!\in\!W_{\bu}$, the sequence of the equivalence classes of maps
$$\big(\id,\wh\Th_c(\ep_k\b,[\A])\big)\!: \P^1\lra \P^1\!\times\!\P^{2m-1}$$
with $\ep_k\!\ra\!0$ converges to the equivalence class of the~map
$$u_{\top}\!\cup\!u_0\!\cup\!u_{\bot}\!:
\P_{\top}^1\!\cup\!\P_0^1\!\cup\!\P_{\bot}^1\lra \P^1\!\times\!\P^{2m-1}$$
as in the proof of Proposition~\ref{matching-orientation_prp}.
The second component of~$u_0$ is mapped to the point $[\A]$ in $\tn{Fix}(\tau_{2m-1})$,
while the second component of~$u_{\top}$ is the image of $\wh\Th(\b,[\A])$ 
under~(\ref{PCproj_e}).
Contracting $\P_0^1$, we obtain the image of $\wh\Th(\b,[\A])$
under the identification~(\ref{Z2qu_e}).
This implies that the~map
\begin{equation*}\begin{split}
\fL^+(1)/S^1 &\lra  \ov{\mc{M}}_{\bu}(\P^{2m-1},2d)^{\tau_{2m-1},c}, \\
\big[u,\ep\big]&\lra \begin{cases} 
\big[\Psi_c\big([u,\ep\big]\big)\big],&\hbox{if}~\ep\!\neq\!0;\\
\big[u],&\hbox{if}~\ep\!=\!0;
\end{cases}
\end{split}\end{equation*}
is a homeomorphism onto a neighborhood of 
$$\wh\Th(W_{\bu})/S^1\subset \partial^1\ov{\mc{M}}_{\bu}(\P^{2m-1},2d)^{\tau_{2m-1},c}$$
in $\ov{\mc{M}}_{\bu}(\P^{2m-1},2d)^{\tau_{2m-1},c}$.

Since the boundary~(\ref{tauspace_e2}) is connected, 
it is sufficient to establish the claim of the proposition
at the boundary elements contained in the image of $\wh\Th(W_{\bu})$ under 
the identification~(\ref{Z2qu_e}).
The substance of this claim is that the bottom diffeomorphisms in the commutative diagram
$$\xymatrix{  \mc{P}_{\bu}(\P^{2m-1},2d)^{\tau_{2m-1},\tau} \ar[d]&
\fL_0^+(1)|_{\wh\Th(W_{\bu})} \ar[l]_<<<<<{\Psi_{\tau}}  \ar[r]^<<<<<{\Psi_{\eta}} \ar[d] & 
\mc{P}_{\bu}(\P^{2m-1},2d)^{\tau_{2m-1},\tau} \ar[d]\\
 \mc{M}_{\bu}(\P^{2m-1},2d)^{\tau_{2m-1},\eta}&
\fL_0^+(1)|_{\wh\Th(W_{\bu})}/S^1 \ar[l]  \ar[r] &
\mc{M}_{\bu}(\P^{2m-1},2d)^{\tau_{2m-1},\eta}}$$
induce the same orientation on their domain from the algebraic orientations 
on the targets.
The latter are induced from the orientations of the targets of 
$\Psi_{\tau}$ and~$\Psi_{\eta}$ induced by the diffeomorphisms~$\wh\Th_{\tau}$ and~$\wh\Th_{\eta}$.
Thus, the claim is equivalent to the orientations on the domain of $\Psi_{\tau}$ and~$\Psi_{\eta}$
induced by these orientations of their target being the same.
This is immediate from the commutativity of the preceding diagram.
\end{proof}

\subsection{The canonical orientations}\label{orient_subs}

The first homomorphism $f\!=\!(f_1,\ldots,f_n)$ in~(\ref{Pnses_e})
is described by
$$\big\{f_i(\ell,\la)\big\}(a_1,\ldots,a_n)=\la a_i
\quad\forall\,(a_1,\ldots,a_n)\!\in\!\ell.$$
With $\mc{U}_i\!=\!\{[Z_1,\ldots,Z_n]\!\in\!\P^{n-1}\!:\,Z_i\!\neq\!0\}$,  let 
$$\mathbf{z}_i\!=\!(z_{i1},\ldots,z_{in}):\mc{U}_i\ra\C^n, 
\qquad\hbox{where}\quad z_{ij}=\frac{Z_j}{Z_i}\,.$$
The second homomorphism in~(\ref{Pnses_e}) over $\mc{U}_i$ is described by 
$$(p_1,\ldots,p_n)\lra
\sum_{j\neq i} \big(p_j(\mathbf{z}_i(\ell))-z_{ij}p_i(\mathbf{z}_i(\ell))\big)
\frac{\partial}{\partial z_{ij}}
\quad\forall\,p_j\in\mc{O}_{\P^{n-1}}(1)|_{\ell}.$$
It is straightforward to check that this homomorphism is independent
of the choice of~$i$ and the sequence~(\ref{Pnses_e}) is indeed exact.
This short exact sequence gives rise to a natural isomorphism
\begin{equation}\label{Pnses_e2}
\Lambda_{\C}^{\top}\big(n\mc{O}_{\P^{n-1}}(1)\big) \approx 
\Lambda_{\C}^{\top}\big(\P^{n-1}\!\times\!\C\big)\otimes
\Lambda_{\C}^{\top}(T\P^{n-1})\approx K_{\P^{n-1}}^*.
\end{equation}

We define conjugations $\fc_{\phi}^{\pm}$ on~$\C^{2m}$ by 
$$\fc_{\phi}^{\pm}\big(x_1,\ldots,x_{2m}\big)= (\pm1)^{|\phi|}
\big((-1)^{|\phi|}\bar{x}_2,\bar{x}_1,\ldots,(-1)^{|\phi|}\bar{x}_{2m},\bar{x}_{2m-1})\big).$$
The involution $\phi\!=\!\eta_{2m-1},\tau_{2m-1}$ lifts to an involution
\begin{gather*}
\Phi\!: \mc{O}_{\P^{2m-1}}(-1)\!\oplus\!\mc{O}_{\P^{2m-1}}(-1)
\lra \mc{O}_{\P^{2m-1}}(-1)\!\oplus\!\mc{O}_{\P^{2m-1}}(-1),\\
\Phi\big(\ell,x,y)=\big(\phi(\ell),\fc_{\phi}^-(y),\fc_{\phi}^+(x)\big).
\end{gather*}
In turn, this involution induces an involution on the dual bundle,
\begin{equation}\label{cO1inv_e}\begin{split}
\Phi\!: \mc{O}_{\P^{2m-1}}(1)\!\oplus\!\mc{O}_{\P^{2m-1}}(1)
&\lra \mc{O}_{\P^{2m-1}}(1)\!\oplus\!\mc{O}_{\P^{2m-1}}(1),\\
\big\{\Phi(\ell,\alpha_1,\alpha_2)\big\}(\phi(\ell),x,y)
&=\ov{\big\{(\ell,\alpha_1,\alpha_2)\big\}\big(\Phi(\phi(\ell),x,y)\big)},
\end{split}\end{equation}
and thus involutions~$\Phi$ on
\begin{equation}\label{O2mRS_e}\begin{split}
\mc{O}_{\P^{2m-1}}(2)&\approx 
\Lambda_{\C}^{\top}\big(\mc{O}_{\P^{2m-1}}(1)\!\oplus\!\mc{O}_{\P^{2m-1}}(1)\big),\\
2m\mc{O}_{\P^{2m-1}}(1)&\approx m\big(\mc{O}_{\P^{2m-1}}(1)\!\oplus\!\mc{O}_{\P^{2m-1}}(1)\big),\\
\mc{O}_{\P^{2m-1}}(2m)&\approx\Lambda_{\C}^{\top}\big(2m\mc{O}_{\P^{2m-1}}(1)\big)\approx
\mc{O}_{\P^{2m-1}}(2)^{\otimes m}
\end{split}\end{equation}
lifting~$\phi$.
The last two lifts commute with the homomorphisms in~(\ref{Pnses_e}) and 
the isomorphism~(\ref{Pnses_e2}), when $n\!=\!2m$ is even.

The isomorphisms~(\ref{Pnses_e2}) and~(\ref{O2mRS_e}) determine a real square root structure
on~$K_{\P^{4m-1}}$, as needed for orienting the moduli spaces 
$\ov{\mc{M}}_l(\P^{4m-1},d)^{\phi,\eta}$.
We describe it below.
For $i=1,2,\ldots,2m$, we define
$$\bar{i}=\begin{cases}i\!+\!1,&\hbox{if}~2\!\!\not|i;\\
i\!-\!1,&\hbox{if}~2|i.\end{cases}$$

A spin structure on $\R\P^{4m-1}\!=\!\Fix(\tau_{4m-1})$ is determined by a trivialization~of 
$T\R\P^{4m-1}\!=\!\Fix(d\tau_{4m-1})$
over any one of the $m$~circles
$$\R\P^1_i=\R\P^1_{\bar{i}}\equiv\big\{[Z_1,\ldots,Z_{4m}]\!\in\!\R\P^{4m-1}\!:
\,Z_j\!=\!0~\forall\, j\!\neq\!i,\bar{i}\big\},$$
with $i\!=\!1,2,\ldots,2m$.
Via the real part (the fixed loci of the involutions) of the short exact sequence~(\ref{Pnses_e}),
such a trivialization induces a trivialization of 
\begin{equation*}\begin{split}
\big(4m\mc{O}_{\P^{4m-1}}(1)\big)^{\R}
&\equiv\Fix\big(\Phi\!:4m\mc{O}_{\P^{4m-1}}(1)\lra 4m\mc{O}_{\P^{4m-1}}(1)\big)\\
&\approx 2m\big(2\mc{O}_{\P^{4m-1}}(1)\big)^{\R},
\end{split}\end{equation*}
with the first trivializing section being $f(\cdot,1)$.
The homotopy class of the resulting trivialization is independent of the lifts
of the $4m\!-\!1$ trivializing sections of $T\R\P^{4m-1}$ over the homomorphism~$g$
in~(\ref{Pnses_e}) and depends only on the homotopy class of the trivialization of
$T\R\P^{4m-1}$.
Furthermore, this induces a bijective correspondence between the homotopy classes
of trivializations of the two bundles.
On the other hand, any trivialization of $(2\mc{O}_{\P^{4m-1}}(1))^{\R}$ over $\R\P^1_i$
induces a trivialization of $2(2\mc{O}_{\P^{4m-1}}(1))^{\R}$, the homotopy class of 
which is independent of the choice of the first trivialization.
Therefore, there is a canonical homotopy class of trivializations of $(4m\mc{O}_{\P^{4m-1}}(1))^{\R}$
over~$\R\P^1_i$, which in turn determines a homotopy class of trivializations of
$T\R\P^{4m-1}$ over~$\R\P^1_i$ and thus a spin structure on $\R\P^{4m-1}$
(which is independent of the choice of~$i$).
This spin structure determines an orientation on $\ov{\mc{M}}_l(\P^{4m-1},d)^{\tau_{4m-1},\tau}$.

Since we trivialize the summands $(2\mc{O}_{\P^{4m-1}}(1))^{\R}$ in the same way, 
the orientations~on 
$$\Lambda_{\R}^{\top}\big(T\R\P^{4m-1}\big)=\big(\Lambda_{\C}^{\top}T\P^{4m-1}\big)^{\R}
\approx \mc{O}_{\P^{4m-1}}(2m)^{\R}\otimes\mc{O}_{\P^{4m-1}}(2m)^{\R} $$
and thus on $\R\P^{4m-1}$ induced by the canonical square root and spin structure 
are the same.
The canonical square root and spin structure are therefore {\it not} compatible
in the sense of Definition~\ref{dfn:comp-spin-sqroot}.
By Proposition~\ref{matching-orientation_prp}, 
we must thus flip the canonical orientation of either
 $\ov{\mc{M}}_l(\P^{4m-1},d)^{\tau_{4m-1},\tau}$ or $\ov{\mc{M}}_l(\P^{4m-1},d)^{\tau_{4m-1},\eta}$ 
when orienting the moduli spaces $\ov{\mc{M}}_l(\P^{4m-1},d)^{\tau_{4m-1}}$
as in Section~\ref{ch:mixed-invariants}. 
For the purposes of Sections~\ref{IntroPn_subs} and~\ref{EquivLocal_sec},
we flip the orientation of the $\eta$ moduli space.
Thus, the chosen orientation of  $\ov{\mc{M}}_l(\P^{4m-1},d)^{\tau_{4m-1}}$ agrees
 with the canonical orientation on its $\tau$-subspace and 
is the reverse of the canonical orientation on its $\eta$-subspace.

\section{Equivariant localization}\label{EquivLocal_sec}

In this section, we use equivariant localization to prove Theorem~\ref{equalGW}
by summing over the fixed loci of a torus action on $\ov{\mc{M}}_l(\P^{4m-1},d)^{\phi}$.
As in \cite[Sections~7,8]{KatzLiu} and \cite[Section~3]{PSW}, 
these loci are described by graphs with one half-edge.
The contribution of the complement of the half-edge to the normal bundle of the corresponding
locus is standard.
Proposition~\ref{CenContr_prp} determines the key contribution of the half-edge to 
the normal bundle and is thus analogous to \cite[(3)]{KatzLiu} and \cite[Lemma~6]{PSW},
though our arguments are rather different from~\cite{KatzLiu} and~\cite{PSW}.

We describe the fixed loci of a natural action of
$$\T\equiv (S^1)^m\equiv \big\{(\ze_1,\ldots,\ze_m)\!\in\!\C^m\!:\,|\ze_k|\!=\!1\big\}$$
on $\ov{\mc{M}}_l(\P^{2m-1},d)^{\phi}$ in Section~\ref{fixedloci_subs} and their normal bundles 
in Section~\ref{NormBndl_subs}.
In Section~\ref{applic_subs}, we prove Theorem~\ref{equalGW} and compute some low-degree
real invariants.
Proposition~\ref{CenContr_prp}  is proved in Section~\ref{signpf_subs}.

\subsection{Fixed loci}
\label{fixedloci_subs}

The $m$-torus $\T$ acts on $\P^{2m-1}$ by
$$(\zeta_1,\ldots,\zeta_m)\cdot[Z_1,\ldots,Z_{2m}\big] 
= [\zeta_1Z_1,\zeta_1^{-1}Z_2,\ldots,\zeta_m Z_{2m-1},\zeta_m^{-1}Z_{2m}].$$
This action commutes with the involutions $\phi\!=\!\tau_{2m-1},\eta_{2m-1}$ and 
has $2m$ fixed points,
$$P_1=[1,0, \ldots,0],\qquad\ldots\qquad P_{2m} = [0,\ldots,1].$$
We note that $\phi(P_i)=P_{\bar{i}}$.
By composition on the left, $\T$ also acts on
${\mc{M}}_l(\P^{2m-1},d)^{\phi,c}$, where $c\!=\!\tau,\eta$.

\begin{lemma}[{\cite[Lemma 3.1]{PZ}}]
The irreducible $\T$-fixed curves in $\P^{2m-1}$ are the lines $L_{ij}$ connecting
the points $p_i$ and $p_j$ with $i\!\neq\!j$.
Moreover, the irreducible $\phi$- and $\T$-fixed curves in $\P^{2m-1}$ are the lines~$L_{i\bar{i}}$.
\end{lemma}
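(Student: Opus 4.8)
The plan is to classify the irreducible $\T$-fixed (equivalently, $\T$-invariant) curves in $\P^{2m-1}$ and then single out those that are also $\phi$-invariant; both steps are instances of the classical description of torus-invariant curves, so the real content is the combinatorics of the weights $\pm e_1,\ldots,\pm e_m$. Since a complex subvariety of $\P^{2m-1}$ is $\T$-invariant exactly when it is invariant under the complexified torus $\T_{\C}=(\C^*)^m$, which acts diagonally with weights $w_{2k-1}=e_k$ and $w_{2k}=-e_k$ in $\Z^m$ on the homogeneous coordinates $z_{2k-1},z_{2k}$, I would work with $\T_{\C}$ throughout.

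First I would fix an irreducible $\T$-invariant curve $C\subset\P^{2m-1}$, let $S=\{i:\,z_i\not\equiv0\text{ on }C\}$ be its support, and pick a point $x\in C$ in the dense open locus where every coordinate indexed by $S$ is nonzero. The orbit closure $\overline{\T_{\C}\cdot x}$ is an irreducible $\T_{\C}$-invariant subvariety of the irreducible curve $C$, hence equals $C$; in particular $x$ is not a fixed point, so $|S|\geq2$. Computing the stabilizer of $x$ as the subgroup of $\T_{\C}$ cut out by the characters $\{w_i-w_{i_0}:i\in S\}$ for a fixed $i_0\in S$ shows that $\dim(\T_{\C}\cdot x)$ equals the rank of the sublattice these characters generate; since the orbit is open and dense in the curve $C$, this rank equals $1$, i.e.\ the weights $\{w_i:i\in S\}$ lie on a single affine line in $\Z^m\otimes\Q$.

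The hard part — really the only non-routine point — is the combinatorial observation that no three of the $2m$ distinct vectors $\pm e_1,\ldots,\pm e_m$ are affinely collinear. I would verify this by cases: the line through $e_k$ and $-e_k$ meets $\{\pm e_j\}$ in exactly those two points, and a line through two points chosen among $\pm e_k,\pm e_l$ with $k\neq l$ meets $\{\pm e_j\}$ only in those two endpoints, because any other lattice point on such a line has some coordinate of absolute value at least $2$. Combined with the previous paragraph this forces $|S|=2$; as the unique irreducible curve inside the corresponding $\P^1$ is all of it, $C$ is the line $L_{ij}$ joining $p_i$ and $p_j$ with $\{i,j\}=S$. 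Conversely every $L_{ij}$ is visibly an irreducible $\T$-invariant curve, which proves the first assertion.

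For the second assertion I would use that $\phi=\tau_{2m-1},\eta_{2m-1}$ is the composition of a coordinate-permuting linear automorphism with complex conjugation, hence carries projective lines to projective lines, while the explicit formulas give $\phi(p_i)=p_{\bar i}$. Therefore $\phi(L_{ij})=L_{\bar i\bar j}$, so $L_{ij}$ is $\phi$-invariant if and only if $\{i,j\}=\{\bar i,\bar j\}$; since $i\mapsto\bar i$ is a fixed-point-free involution of $\{1,\ldots,2m\}$, this holds precisely when $j=\bar i$. Hence the irreducible $\phi$- and $\T$-fixed curves are exactly the $m$ lines $L_{i\bar i}$, as claimed.
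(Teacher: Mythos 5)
The paper does not prove this lemma at all; it cites it directly from \cite[Lemma 3.1]{PZ}. So there is no in-paper proof to compare against, and your task was effectively to supply the missing argument. Your proof is correct and self-contained: passing to $\T_{\C}=(\C^*)^m$, using the orbit-closure of a generic point to reduce to the affine collinearity of the weights $\{w_i:i\in S\}$, and then observing combinatorially that no three of $\pm e_1,\ldots,\pm e_m$ are affinely collinear is exactly the standard torus-orbit argument; the reduction of the second statement to $\phi(L_{ij})=L_{\bar i\bar j}$ via $\phi(p_i)=p_{\bar i}$ and the fact that anti-linear maps send lines to lines is also correct.

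One small presentational point: in your second paragraph the implications are stated in the wrong order. You first assert that $\overline{\T_{\C}\cdot x}=C$ and then deduce that $x$ is not a fixed point and that $|S|\ge 2$, but the deduction that the orbit closure is all of $C$ already requires knowing that $x$ is not fixed (otherwise the orbit closure is a point). The clean order is: since $C$ is a curve and not a point, $|S|\ge 2$; since every torus-fixed point of $\P^{2m-1}$ has exactly one nonzero homogeneous coordinate, a point $x$ with all coordinates in $S$ nonzero is not fixed; hence the orbit of $x$ is positive-dimensional and its closure is an irreducible positive-dimensional closed subvariety of the irreducible curve $C$, so equal to $C$. With that reordering the argument is airtight, and the rest — the stabilizer/rank computation and the lattice-coordinate bound $\ge 2$ ruling out a third point among $\pm e_j$ on any line through two of them — is fine as written.
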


The above $\T$-action on $\P^{2m-1}$ naturally lifts to the tautological line bundle
$$\mc{O}_{\P^{2m-1}}(-1)\lra \P^{2m-1}$$
and thus to the line bundle $\mc{O}_{\P^{2m-1}}(a)$ for every $a\!\in\!\Z$.
Let $\la_i\!\in\!H_{\T}^*$ be the equivariant first Chern class of 
$\mc{O}_{\P^{2m-1}}(1)|_{P_i}$.
Thus, 
$$\la_{\bar{i}}=-\la_i, \qquad
H_{\T}^*=\Q[\la_1,\la_3,\ldots,\la_{2m-1}].$$

Let $[f,(z_k^+,z_k^-)_k]$ be an element of  ${\mc{M}}_l(\P^{2m-1},d)^{\phi,c}$ fixed by 
the $\T$-action.
Since there are no $\T$-fixed points in $\P^{2m-1}$ that are also fixed by~$\phi$,
the domain~$\Si$ of~$f$ contains a central component $\Si_0$, while the remaining irreducible
components come in conjugate pairs. 
Furthermore, $f_0\equiv f|_{\Si_0}$ is a cover of some line
$L_{i\bar{i}}$ of some degree 
$d_0\!\in\!\Z^+$ which is branched only over~$P_i$ and~$P_{\bar{i}}$.
Every nodal and marked point of~$\Si$ and branched point of~$f$
is mapped to a fixed point~$P_j$.
If $d_0\!<\!d$ or $l\!>\!1$,
the complement of $\Si_0$ in $\Si$ consists of two nodal curves 
$\Si'$ and~$\Si''$, each with $l\!+\!1$ marked points $(x_k)_{k=0}^l$ so that
$x_0$ corresponds to the node shared with~$\Si_0$ and 
each of the remaining points is decorated by a sign~$s_k$, $+$ or~$-$, 
depending on whether
it is the first or the second point in the pair~$(z^+,z_k^-)$.

Similarly to \cite[Section~27.3]{Clay},
every fixed locus of such maps can be modeled on  a labeled tree, $\Gamma$, 
symmetric about the mid-point of a distinguished edge~$e_0$, 
which corresponds to the central component~$\Si_0$ of the $\T$-fixed maps in the locus.
Every edge~$e$ of~$\Gamma$ is labeled by some $d_e\!\in\!\Z^+$, indicating
the degree of the corresponding map; these labels 
are preserved by the reflection symmetry of~$\Gamma$.
Every vertex~$v$ is labeled by some $j_v\!=\!1,2,\ldots,2m$ 
in such a way that the reflection symmetry takes a vertex labeled~$j$ to 
a vertex labeled~$\bar{j}$.
The graph~$\Gamma$ also contains open edges which correspond to the marked points of the
domain~$\Si$; we denote by $v(k)$ the vertex to which the $k$-th marked point is attached.
Figure~\ref{decorated-graph}(a) shows one such graph describing a $\T$-locus in 
$\ov{\mc{M}}_7(\P^{2m-1},[7])^{\phi,c}$.
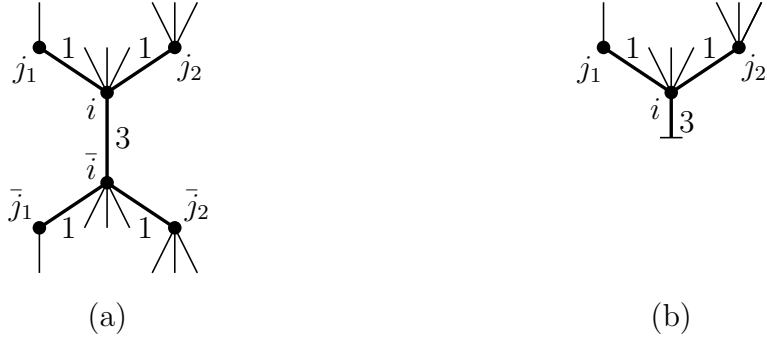
\begin{figure}
\begin{pspicture}(-3,-2.5)(11,2.4)
\psset{unit=.3cm}
\pscircle*(0,2){.3}\pscircle*(0,-2){.3}\psline[linewidth=.15](0,2)(0,-2)
\psline[linewidth=.07](0,2)(0,4)\psline[linewidth=.07](0,2)(1,4)\psline[linewidth=.07](0,2)(-1,4)
\psline[linewidth=.07](0,-2)(0,-4)\psline[linewidth=.07](0,-2)(1,-4)\psline[linewidth=.07](0,-2)(-1,-4)
\psline[linewidth=.15](0,2)(3,4)\pscircle*(3,4){.3}\psline[linewidth=.07](3,4)(3,6)
\psline[linewidth=.07](3,4)(4,6)\psline[linewidth=.07](3,4)(2,6)
\psline[linewidth=.15](0,2)(-3,4)\pscircle*(-3,4){.3}\psline[linewidth=.07](-3,4)(-3,6)
\psline[linewidth=.15](0,-2)(3,-4)\pscircle*(3,-4){.3}\psline[linewidth=.07](3,-4)(3,-6)\psline[linewidth=.07](3,-4)(4,-6)\psline[linewidth=.07](3,-4)(2,-6)
\psline[linewidth=.15](0,-2)(-3,-4)\pscircle*(-3,-4){.3}\psline[linewidth=.07](-3,-4)(-3,-6)
\rput(.7,0){$3$}\rput(-1.7,4){$1$}\rput(1.7,4){$1$}
\rput(-1.7,-4){$1$}\rput(1.7,-4){$1$}
\rput(-.7,1.3){$i$}\rput(-.7,-1.3){$\bar{i}$}
\rput(-3.6,3){$j_1$}\rput(3.7,3.1){$j_2$}
\rput(-3.8,-3){$\bar{j}_1$}\rput(4,-3){$\bar{j}_2$}

\pscircle*(25,2){.3}\psline[linewidth=.15](25,2)(25,0)\psline[linewidth=.07](24.5,0)(25.5,0)
\psline[linewidth=.07](25,2)(25,4)\psline[linewidth=.07](25,2)(26,4)\psline[linewidth=.07](25,2)(24,4)
\psline[linewidth=.15](25,2)(28,4)\pscircle*(28,4){.3}\psline[linewidth=.07](28,4)(28,6)
\psline[linewidth=.07](28,4)(29,6)\psline[linewidth=.07](28,4)(29,6)
\psline[linewidth=.07](28,4)(27,6)
\psline[linewidth=.15](25,2)(22,4)\pscircle*(22,4){.3}\psline[linewidth=.07](22,4)(22,6)
\rput(25.7,0.7){$3$}\rput(23.3,4){$1$}\rput(26.7,4){$1$}
\rput(24.3,1.3){$i$}
\rput(21.4,3){$j_1$}\rput(28.7,3.1){$j_2$}

\rput(0,-8){(a)}\rput(25,-8){(b)} 

\end{pspicture}	
\caption{A decorated graph on the left and one of its halves on the right.}\label{decorated-graph}
\end{figure}

Removing $e_0$ from $\Gamma$, we get a disconnected graph $\Gamma'\!\sqcup\!\bar\Gamma'$,
with $\bar\Gamma''$  obtained from $\Gamma'$ by replacing each vertex label~$j$
by~$\bar{j}$.
Choose one of the connected subgraphs, e.g.~$\Gamma'$, and add
the corresponding half-edge in place of the central edge; see Figure~\ref{decorated-graph}(b). 
We denote the total half graph by $\Gamma_{\half}$.
All calculations below are based on this half-graph; 
it is straightforward to check that the result is independent of which half we choose.

For each vertex $v$ in $\Gamma_{\half}$, let 
$$\ov{\mc{M}}_v=\ov{\mc{M}}_{0,\val(v)},$$
where $\val(v)$ is the valence of~$v$, i.e.~the number of edges and open edges in~$\Gamma$
leaving~$v$.
If  $\val(v)\!=\!1,2$, we take $\ov{\mc{M}}_{0,\val(v)}$ to be a point.
Let 
$$\ov{\mc{M}}_{\Gamma_{\half}}=\prod_v\ov{\mc{M}}_v\,, \qquad
D_{\Gamma_{\half}}=\big|\tn{Aut}(\Gamma_{\half})\big|\, d_0\prod_ed_e,$$
where the products are taken over the vertices $v$ and edges $e$ in~$\Gamma_{\half}$
and $\tn{Aut}(\Gamma_{\half})$ denotes the group of automorphisms of~$\Gamma_{\half}$.

\subsection{Normal bundles}\label{NormBndl_subs}

For every flag $F\!=\!(v,e)$, let $j_F\!=\!j_v$.
For every element $[f,(z^+,z^-)_k]$ in the fixed locus corresponding to~$\Gamma$,
there is an exact sequence
\begin{equation*}\begin{split}
0 \lra \Aut\big(\Si,(z_k^+,z_k^-)_k\big)_{\R}  \lra \Def(f)_{\R} &\lra 
\Def\big(f,(z_k^+,z_k^-)_k\big)_{\R} \\
&\lra \Def(\Si,(z_k^+,z_k^-)_k)_{\R} \lra 0,
\end{split}\end{equation*}
where $\Si$ is the domain of~$f$. Thus,
\begin{equation}\label{NGa_e}\begin{split}
e(N_{\Gamma})&=e\big(\Def\big(f,(z_k^+,z_k^-)_k\big)_{\R}^{\mov}\big)\\
&=\frac{e(\Def(f)_{\R}^{\mov})e(\Def(\Si,(z_k^+,z_k^-)_k)_{\R}^{\mov})}
{e(\Aut(\Si,(z_k^+,z_k^-)_k)_{\R}^{\mov})},
\end{split}\end{equation}
where ``mov" means the moving part (the part with the nonzero $\T$-weights)
and $e(\cdot)$ denotes the equivariant Euler class.
Following \cite[Section~27.4]{Clay}, we now determine the three terms appearing
on the right-hand side of~(\ref{NGa_e}). 

For each edge $e$ of $\Gamma_{\half}$, $\Aut(\Si,(z_k^+,z_k^-)_k)_{\R}$ contains 
a $\T$-fixed one-dimensional complex subspace of infinitesimal automorphisms
of the corresponding non-contracted component $\Si_e$ which fix the two branch points 
of $f_e\!\equiv\!f|_{\Si_e}$; this subspace cancels with a similar piece in $\Def(f_e)_{\R}$.
The space $\Aut(\Si,(z_k^+,z_k^-)_k)_{\R}$ also contains a $\T$-fixed one-dimensional real 
subspace of infinitesimal automorphisms of the central component~$\Si_0$;
this subspace cancels with a similar piece in $\Def(f_0)_{\R}$,
up to sign taken into account by Proposition~\ref{CenContr_prp}.
The remaining automorphisms, none of which is $\T$-fixed, correspond to 
the vertices~$v$ in~$\Gamma_{\half}$ of valence~1;
they describe the infinitesimal automorphisms moving the branch point~$x_v$ of~$f_e$,
where $e$ is the unique edge containing~$v$, that lies over~$j_v$.
Thus, similarly to \cite[Section~27.4]{Clay},
\begin{equation}\label{eAut_e}\begin{split}
e\big(\Aut(\Si,(z_k^+,z_k^-)_k)_{\R}^{\mov}\big)
&=\prod_{\begin{subarray}{c}v\in e\\ \val(v)=1\end{subarray}}\!\!\!\!\!e(T_{x_v}\Si_e)
=\prod_{\begin{subarray}{c}v\in e\\ \val(v)=1\end{subarray}}\!\!\!\!\!w_{(v,e)}\,,\\
\hbox{where}\qquad
w_{(v,\{v,v'\})}&=\frac{\la_{j_v}-\la_{j_{v'}}}{d_{\{v,v'\}}}\,.
\end{split}
\end{equation}

A deformation of a contracted component of the domain (as a marked curve) is $\T$-fixed.
The moving deformations come from smoothing (conjugate pairs) of nodes of~$\Si$. 
For each node~$x$ of $\Si$ corresponding to $\Gamma_{\half}$,
$\Def(\Si,(z_k^+,z_k^-)_k)_{\R}^{\mov}$ contains the complex one-dimensional space isomorphic
to the tensor product of the tangent spaces of the two components of~$\Si$ sharing~$x$. 
There are two possibilities. 
Each $v\!\in\!\Gamma_{\half}$ shared by two edges contributes $w_{F_1}\!+\!w_{F_2}$, 
where $F_1$ and $F_2$ are the two flags containing~$v$.
Each flag $F\!=\!(v,e)$ with $v\!\in\!\Gamma_{\half}$ and $\val(v)\!\ge\!3$ 
contributes $w_F\!-\!\psi_F$, where $\psi_F\!\in\!H^2(\ov{\mc{M}}_v)$ is 
the first Chern class of the universal cotangent bundle on $\ov{\mc{M}}_v$ corresponding to
the marked point determined by~$F$ on the contracted curve determined by the vertex~$v$.
Thus,
\begin{equation}\label{eDefSi_e}\begin{split}
e\big(\Def(\Si,(z_k^+,z_k^-)_k)_{\R}^{\mov}\big)&=
\prod_{\begin{subarray}{c}\val(v)=2\\ v\in e_1,e_2\\ e_1\neq e_2\end{subarray}}
\!\!\!\!\big(w_{(v,e_1)}\!+\!w_{(v,e_2)}\big)\\
&\hspace{.5in}\times
\prod_{\val(v)\ge3}\prod_{v\in e}\big(w_{(v,e)}\!-\!\psi_{(v,e)}\big).
\end{split}\end{equation}

Finally, there is an exact sequence 
\begin{equation*}\begin{split}
0&\lra \Def(f)_{\R}  \\
&\lra H^0(\Si_{e_0}, f_0^*T\P^{2m-1})_{\R} \oplus 
\bigoplus_{e\neq e_0} \!H^0(\Si_e,f_e^*T\P^{2m-1}) 
\oplus \bigoplus_v \! T_{p_{j_v}}\P^{2m-1}  \\
&\lra \bigoplus_F \!T_{p_{j_F}} \P^{2m-1} \lra 0,
\end{split}\end{equation*}
where the direct sums are taken over the vertices $v$, edges $e$, and flags $F$ in $\Gamma_{\half}$.
Thus,
\begin{equation}\label{eDefMap_e}\begin{split}
&e\big(\Def(f)_{\R}^{\mov}\big)=\prod\limits_v\prod\limits_{j\neq j_v}\!\!(\la_{j_v}\!-\!\la_j)\\
&\hspace{.4in}\times
\frac{e(H^0(\Si_{e_0}, f_0^*T\P^{2m-1})_{\R}^{\mov})
\prod\limits_{e\neq e_0}\!\!\! e(H^0(\Si_e,f_e^*T\P^{2m-1})^{\mov})}
{\prod\limits_F\prod\limits_{j\neq j_F}\!\!(\la_{j_F}\!-\!\la_j)}\,.
\end{split}\end{equation}
The contribution of $e\!\neq\!e_0$ is standard and given by
\begin{equation}\label{eContr1_e}\begin{split}
&e\big(H^0(\Si_e,f_e^*T\P^{2m-1})^{\mov}\big)\\
&\qquad= (-1)^{d_e} \frac{d_e!^2}{d_e^{2d_e}} 
\big(\la_{j_1}\!-\!\la_{j_2}\big)^{2d_e} \prod_{r=0}^{d_e}\prod_{k\neq j_1,j_2}\!\!\! 
\left(\frac{r\la_{j_1}\!+\!(d_e\!-\!r)\la_{j_2}}{d_e}- \la_k\right),
\end{split}\end{equation}
where $j_1$ and $j_2$ are the two vertex labels of the edge $e$; 
see \cite[Section 27.4]{Clay}.
The contribution of the half-edge $e_0$ is described by the next lemma,
which is proved in Section~\ref{signpf_subs}.

\begin{proposition}\label{CenContr_prp}
Let $\phi\!=\!\tau_{4m-1},\eta_{4m-1}$, $c\!=\!\tau,\eta$, and
$$f_0\!:\big(\P^1,[1,0],[0,1]\big)\ra\big(\P^{4m-1},P_i,P_{\bar{i}}\big)$$ 
be the degree~$d_0$  
cover of a line $L_{i\bar{i}}$ branched over only $P_i$ and $P_{\bar{i}}$
and intertwining the involutions $c$ and~$\phi$. 
With respect to the canonical orientation of the moduli space 
$\ov{\mc{M}}_0(\P^{4m-1},d_0)^{\phi,c}$ as in Section~\ref{orient_subs},
\begin{equation}\label{eContr2_e}\begin{split}
&e\big(H^0(\Si_{e_0},f_0^*T\P^{4m-1})_{\R}^{\mov}\big)\\
&\hspace{.5in}=
(-1)^{d_0}d_0!\bigg(\frac{2\la_i}{d_0}\bigg)^{\!\!d_0}
\!\!\!\!\!\!\!\!
\prod_{\begin{subarray}{c}1\le j\le 4m\\ j\neq i,\, 2|(j-i)\end{subarray}}
\!\!\prod_{r=0}^{d_0}
\!\!\bigg(\!\frac{d_0\!-\!2r}{d_0}\la_i-\la_j\!\bigg).
\end{split}\end{equation}
\end{proposition}

\subsection{Applications}\label{applic_subs}

By the classical localization theorem of \cite{ABo},
\begin{equation}\label{ABform_e}\begin{split}
N_d^{\phi}(t_1,\ldots,t_l)&=\sum_{\Gamma\triangleright\tau}
\frac{1}{D_{\Gamma_{\half}}}\int_{{\mc{M}}_{\Gamma_{\half}}}\!\!\!\!\!\!
\frac{\prod\limits_{k=1}^ls_k^{t_k+1}\la_{j_{v(k)}}^{t_k}}{e(N_{\Gamma})}\\
&\hspace{.7in}-\sum_{\Gamma\triangleright\eta}
\frac{1}{D_{\Gamma_{\half}}}\int_{{\mc{M}}_{\Gamma_{\half}}}\!\!\!\!\!\!
\frac{\prod\limits_{k=1}^ls_k^{t_k+1}\la_{j_{v(k)}}^{t_k}}{e(N_{\Gamma})}\,,
\end{split}\end{equation}
where the first and second sums are taken over the graphs $\Gamma$ corresponding to
the fixed loci~in 
\begin{equation}\label{Mphi_e}
\ov{\mc{M}}_l(\P^{4m-1},d)^{\phi,\tau} \qquad\hbox{and}\qquad 
\ov{\mc{M}}_l(\P^{4m-1},d)^{\phi,\eta},
\end{equation}
respectively.
By Section~\ref{Podd}, such graphs satisfy $|\phi|\!+\!|c|d_0\!\in\!2\Z$
with $c\!=\!\tau,\eta$, respectively.
The negative sign in~(\ref{ABform_e}) arises due to the fact that we flip the orientation~of 
the second moduli space above when gluing it to the first; 
see the last paragraph of Section~\ref{orient_subs}.
Along with (\ref{NGa_e})-(\ref{eContr2_e}), (\ref{ABform_e}) provides an explicit way 
of computing the numbers~(\ref{numsPndfn_e}).

\begin{proof}[Proof of Theorem~\ref{equalGW}]
Suppose $t_k\!\in\!2\Z$ for some $k$. 
Given any graph $\Gamma$ corresponding to a fixed locus in either moduli space in~(\ref{Mphi_e}),
let $\Gamma'$ be the graph obtained from $\Gamma$ by changing the sign of the $k$-th marked point.
By~(\ref{ABform_e}), the contribution of $\Gamma'$ to $N_d^{\phi}(t_1,\ldots,t_l)$
is the negative of the contribution of~$\Gamma$.
Thus,  $N_d^{\phi}(t_1,\ldots,t_l)$ vanishes.

Suppose $d\!\in\!2\Z$. 
The graphs $\Gamma$ describing  the fixed loci in the  spaces~(\ref{Mphi_e}) 
with $\phi\!=\!\tau_{4m-1}$ are the same.
By~(\ref{ABform_e}), this implies that $N_d^{\tau_{4m-1}}(t_1,\ldots,t_l)$ vanishes.
By Lemma~\ref{new-lemma}, both  spaces~(\ref{Mphi_e}) are empty for $\phi\!=\!\eta_{4m-1}$.
Thus, $N_d^{\eta_{4m-1}}(t_1,\ldots,t_l)$ also vanishes.

Suppose $d\!\not\in\!2\Z$. 
By Lemma~\ref{new-lemma}, the second  space in~(\ref{Mphi_e}) for $\phi\!=\!\tau_{4m-1}$ 
and the first  space in~(\ref{Mphi_e}) for $\phi\!=\!\eta_{4m-1}$ are empty.
The graphs $\Gamma$ corresponding to the fixed loci in
 the first  space in~(\ref{Mphi_e}) for $\phi\!=\!\tau_{4m-1}$ 
and the second  space in~(\ref{Mphi_e}) for $\phi\!=\!\eta_{4m-1}$ are the same.
Along with~(\ref{ABform_e}), this implies~(\ref{numsPn_e}).
\end{proof}

If $d,t_1,\ldots,t_l$ are odd, (\ref{ABform_e}) gives
\begin{equation}\label{ABform_e2}
N_d^{\tau_{4m-1}}(t_1,\ldots,t_l) =-N_d^{\eta_{4m-1}}(t_1,\ldots,t_l)
=2^{l-1}\sum_{\Gamma_{\half}}
\frac{1}{D_{\Gamma_{\half}}}\int_{{\mc{M}}_{\Gamma_{\half}}}\!\!
\frac{\prod\limits_{k=1}^l\la_{j_{v(k)}}^{t_k}}{e(N_{\Gamma})},
\end{equation}
where the sum is taken over all half-graphs $\Gamma_{\half}$ corresponding to 
$\ov{\mc{M}}_l(\P^{4m-1},d)^{\tau_{4m-1},\tau}$ containing marked points 
with the $+$ sign only.

\begin{example}[$d\!=\!1$]\label{d1_eg}
We now establish (\ref{d1N_e1}).
For $d\!=\!1$ and $t_1,\ldots,t_l\!\in\!\Z^+$ odd, 
(\ref{ABform_e2}) and Proposition~\ref{CenContr_prp} give
$$N_1^{\tau_{4m-1}}(t_1,\ldots,t_l) 
=-2^l\sum_{i=1}^{2m} \frac{\la_{i}^{t_1+\ldots+t_l}}
{2\la_{i}\prod\limits_{\begin{subarray}{c}1\le j\le 2m\\ j\neq i\end{subarray}}
\!\!\!\!\!(\la_{j}^2\!-\!\la_{i}^2)}(2\la_{i})^{-(l-1)},$$
after formally replacing $(\la_1,\la_3,\ldots)$ by  $(\la_1,\la_2,\ldots)$. 
Using the second condition in~(\ref{d1N_e2}) and the residue theorem on $S^2$, we obtain
\begin{equation}\label{d1eg_e2}
N_1^{\tau_{4m-1}}(t_1,\ldots,t_l) 
=\sum_{i=1}^{2m}\underset{z=\la_{i}^2}\Res\frac{z^{2m-1}dz}{\prod\limits_{j=1}^{2m}(z\!-\!\la_{j}^2)}
=-\underset{z=\infty}\Res\frac{z^{2m-1}dz}{\prod\limits_{j=1}^{2m}(z\!-\!\la_{j}^2)}
=1.
\end{equation}
\end{example}

\begin{example}[$d\!=\!3$]\label{d3_eg}
We now establish~(\ref{d3N_e1})   using Pandharipande's trick of twisting by 
the equivariant weights to reduce the number of the contributing torus fixed loci;
the restrictions of the integrand to the remaining loci vanish. 
Let 
$$ J =\big\{1,3,\ldots,4m\!-\!1\big\}-\{1,3\}$$
By the last condition in~(\ref{d3N_e2}), $J\!=\!J_1\!\sqcup\!\!J_2$
for some $J_1,J_2\!\subset\!J$ with \hbox{$|J_i|\!=\!(t_i\!-\!3)/2$}.
Set
\begin{equation*}\begin{split}
\alpha_k&=(H\!+\!\la_1)(H^2\!-\!\la_3^2) \prod_{j\in J_k} (H^2\!-\!\la_j^2),\quad k\!=\!1,2,\\
\alpha_3&=(H\!+\!\la_3)(H^2\!-\!\la_1^2) \prod_{j\in J} (H^2\!-\!\la_j^2).
\end{split}\end{equation*}
We now apply the equivariant localization theorem of~\cite{ABo} to compute
\begin{equation}\label{d3twist_e}
N_3^{\tau_{4m-1}}(t_1,t_2,4m\!-\!1)= \int_{\ov{\mc{M}}_3(\P^{4m-1},3)^{\tau_{4m-1}}} 
\!\!\!\tn{ev}_1^*\alpha_1\,\tn{ev}_2^*\alpha_2\,\tn{ev}_3^*\alpha_3 \,.
\end{equation}
The restriction of $\ev_3^*\alpha_3$ to a torus fixed locus vanishes unless the marked point~3 is
sent to~$P_3$.
For $k\!=\!1,2$, 
the restriction of $\ev_k^*\alpha_k$ to a fixed locus vanishes unless the marked point~$i$ is
sent to~$P_j$ with $j\!=\!1$, or $j\!\in\!J\!-\!J_k$, or $\bar{j}\!\in\!R\!-\!J_k$.
Since any half-graph has at most two vertices in this case and $J\!\subset\!J_1\!\cup\!J_2$,
the restriction of the integrand in~(\ref{d3twist_e}) to a torus fixed locus vanishes 
unless the marked points~1 and~2 are sent to~$P_1$.
Thus,  Figure~\ref{fig:2pairs} shows all half-graphs contributing to~(\ref{d3twist_e}).
From~(\ref{ABform_e2}) and Proposition~\ref{CenContr_prp}, we thus~obtain 
\begin{equation*}\begin{split}
N_3^{\tau_{4m-1}}(t_1,t_2,4m\!-\!1)&= 
 \frac{(3\la_1\!-\!\la_3)(\la_1\!+\!\la_3)}{2\la_1(\la_1\!-\!\la_3)}
+\frac{(3\la_1\!+\!\la_3)(\la_1\!-\!\la_3)}{2\la_1(\la_1\!+\!\la_3)}\\
&\hspace{.7in}+
\frac{\la_1(\la_3\!+\!\la_1)}{\la_3(\la_3\!-\!\la_1)}-
\frac{\la_1(\la_3\!-\!\la_1)}{\la_3(\la_3\!+\!\la_1)}=-1.
\end{split}\end{equation*}
\end{example}

\begin{figure}
\begin{pspicture}(-3,-2)(11,1.7)
\psset{unit=.3cm}
\pscircle*(-8,2){.3}
\psline[linewidth=.15](-8,2)(-8,0)\psline[linewidth=.1](-8,2)(-9,3.5)\psline[linewidth=.1](-8,2)(-9.5,3)
\psline[linewidth=.15](-8,2)(-3,2)\psline[linewidth=.1](-3,2)(-1.5,3.5)
\pscircle*(-3,2){.3}
\psline[linewidth=.1](-8.7,-0)(-7.3,0)
\rput(-7.7,3.2){$1$}\rput(-3.2,3){$3$}\rput(-5.4,3){$1$}
\rput(-9.3,4.3){\scriptsize{$2^+$}}\rput(-10.2,3){\scriptsize{$1^+$}}\rput(-1,4){\scriptsize{$3^+$}}
\rput(-9.1,0){$1$}

\pscircle*(3,2){.3}
\psline[linewidth=.15](3,2)(3,0)\psline[linewidth=.1](3,2)(2,3.5)\psline[linewidth=.1](3,2)(1.5,3)
\psline[linewidth=.15](3,2)(8,-2)
\pscircle*(8,-2){.3}
\psline[linewidth=.1](2.3,-0)(3.7,0)\psline[linewidth=.1](8,-2)(9.5,-3.5)
\rput(3.3,3.2){$1$}
\rput(2,4.3){\scriptsize{$2^+$}}\rput(0.8,3){\scriptsize{$1^+$}}\rput(10,-4){\scriptsize{$3^-$}}
\rput(7.8,-3){$4$}\rput(2,0){$1$}
\rput(6,0.6){$1$}

\pscircle*(16,2){.3}
\psline[linewidth=.15](21,2)(21,0)\psline[linewidth=.1](16,2)(15,3.5)\psline[linewidth=.1](16,2)(14.5,3)
\psline[linewidth=.15](16,2)(21,2)\psline[linewidth=.1](21,2)(22.5,3.5)
\pscircle*(21,2){.3}
\psline[linewidth=.1](20.3,-0)(21.7,0)
\rput(16.3,3.2){$1$}\rput(20.8,3){$3$}
\rput(15,4.3){\scriptsize{$2^+$}}\rput(13.8,3){\scriptsize{$1^+$}}
\rput(23.2,4.2){\scriptsize{$3^+$}}
\rput(18.6,3){$1$}

\pscircle*(27,-2){.3}
\psline[linewidth=.15](32,2)(32,0)
\psline[linewidth=.1](32,2)(33.5,3.5)
\pscircle*(32,2){.3}
\psline[linewidth=.15](27,-2)(32,2)\psline[linewidth=.1](27,-2)(26,-3.5)\psline[linewidth=.1](27,-2)(25.5,-3)
\psline[linewidth=.1](31.3,-0)(32.7,0)
\rput(31.8,3){$3$}
\rput(34.2,4.2){\scriptsize{$3^+$}}
\rput(27.3,-3.2){$2$}
\rput(25.7,-4.3){\scriptsize{$2^-$}}\rput(24.8,-3){\scriptsize{$1^-$}}
\rput(19.9,0){$1$}\rput(33.1,0){$1$}\rput(29,0.6){$1$}

\rput(-5.5,-5.5){$(1)$}
\rput(5.5,-5.5){$(2)$}
\rput(18.5,-5.5){$(3)$}
\rput(29.5,-5.5){$(4)$}
\end{pspicture}	

\caption{The half-graphs contributing to the localization computation of 
$N_3^{\tau_{2m-1}}(t_1,t_2,4m\!-\!1)$ 
with the constraints  $\alpha_1,\alpha_2,\alpha_3$.}\label{fig:2pairs}
\end{figure}
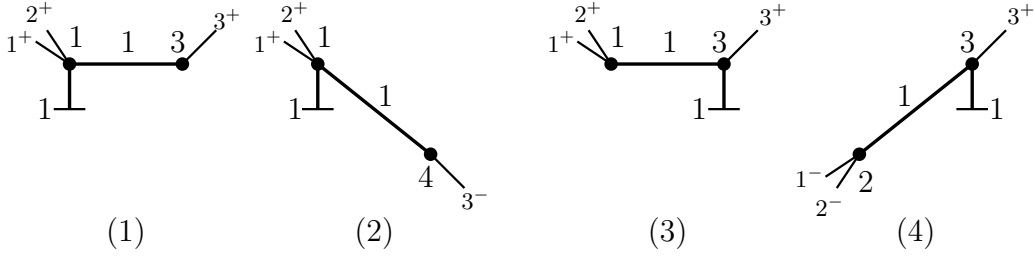

\subsection{Comparisons of orientations}\label{signpf_subs}

Most of this section is dedicated to establishing Proposition~\ref{CenContr_prp}.
We then compare the orientation on $\ov{\mc{M}}_l(\P^{4m-1},d)^{\phi,c}$
induced by the real square root and the spin structure of Section~\ref{orient_subs}
with the algebraic orientation defined in Section~\ref{AGorient_subs}; see Corollary~\ref{comporient_crl}.

\begin{proof}[Proof of Proposition~\ref{CenContr_prp}]
For $i\!=\!1,\ldots,2m$, let
$$L_{i\bar{i}}^{\bu}\equiv L_{i\bar{i}}-\big\{P_i,P_{\bar{i}}\big\}$$
denote the complement of the torus fixed points of the line $L_{i\bar{i}}$.
The involution~$\Phi$ on $\mc{O}_{\P^{2m-1}}(1)\!\otimes\!\mc{O}_{\P^{2m-1}}(1)$ 
induced by~(\ref{cO1inv_e}) is given~by
\begin{equation*}
\big\{\Phi(\ell,\alpha)\big\}
\big(\phi(\ell),x\!\otimes\!y\big)
=-\ov{\alpha\big(\fc_{\phi}^-(y)\!\otimes\!\fc_{\phi}^+(x)\big)}.
\end{equation*}
The restriction of 
$$\mc{O}_{\P^{2m-1}}(1)\otimes\mc{O}_{\P^{2m-1}}(1)
\approx\Lambda_{\C}^{\top}\big(\mc{O}_{\P^{2m-1}}(1)\!\oplus\!\mc{O}_{\P^{2m-1}}(1)\big)$$
to $L_{i\bar{i}}^{\bu}$ is trivialized by the homomorphism
$$(\ell,\alpha)\ra \big(\ell,\mf{i} z_{i\bar{i}}^{-1}
\alpha(\mathbf{z}_i(\ell)\!\otimes\!\mathbf{z}_i(\ell))\big).$$
Via this trivialization, the above involution
on $\mc{O}_{\P^{2m-1}}(1)\!\otimes\!\mc{O}_{\P^{2m-1}}(1)$ 
corresponds to the standard involution on 
$L_{i\bar{i}}^{\bu}\!\times\!\C$
lifting the restriction of~$\phi$.
Thus, the trivialization
\begin{equation}\label{dettriv_e}\begin{split}
\mc{O}_{\P^{2m-1}}(2m)\big|_{L_{i\bar{i}}^{\bu}}
&\lra L_{i\bar{i}}^{\bu}\!\times\!\C, \\
(\ell,\alpha)&\lra \big(\ell,\mf{i}^m z_{i\bar{i}}^{-m}\alpha(\mathbf{z}_i(\ell)^{\otimes 2m})\big),
\end{split}\end{equation}
is an admissible trivialization of $(\mc{O}_{\P^{2m-1}}(2m),\Phi)$ induced by a trivialization
of its real square root via the canonical isomorphism 
$$\mc{O}_{\P^{2m-1}}(2m)\approx \mc{O}_{\P^{2m-1}}(m)\otimes \mc{O}_{\P^{2m-1}}(m)$$
if $m\!\in\!2\Z$.

In the case $\phi\!=\!\tau_{2m-1}$,
\begin{equation*}\begin{split}
\big(2\mc{O}_{\P^{2m-1}}(1)\big)^{\R}
=\big\{(\alpha_1,\alpha_2)\!\in\!2\mc{O}_{\P^{2m-1}}(1)|_{\ell}\!:~&\ell\!\in\!\R\P^{2m-1},\\
&\alpha_2(x)=\ov{\alpha_1(\fc_{\phi}^+(x))}~\forall\,x\!\in\!\ell\big\}.
\end{split}\end{equation*}
Thus, we can trivialize $(2\mc{O}_{\P^{2m-1}}(1))^{\R}$ over $\R\P^1_i$ by 
\begin{equation}\label{cO2triv_e}
(\ell,\alpha_1,\alpha_2)\ra \big(\ell,\alpha_1(\mathbf{z}_i(\ell))\big)\in \R\P^1_i\!\times\!\C.
\end{equation}
Therefore, by  Section~\ref{orient_subs},  the trivialization
\begin{equation}\label{Psidfn_e}\begin{split}
\Psi\!:\big(2m\mc{O}_{\P^{2m-1}}(1)\big)^{\R}&\ra  \R\P^1_i\!\times\!\C^{\{j:\,2|(j-i)\}},\\
\Psi(\ell,\alpha_1,\ldots,\alpha_{2m})_j&=\alpha_j(\mathbf{z}_i(\ell)),
\end{split}\end{equation}
determines the canonical spin structure on $\R\P^{2m-1}$ if $m\!\in\!2\Z$.

The standard coordinate vector fields on $\mc{U}_i\!\subset\!\P^{2m-1}$
as in Section~\ref{orient_subs} induce a trivialization of $T\P^{2m-1}$ 
along~$L_{i\bar{i}}^{\bu}$.
Let 
$$\Phi_i,\wt\phi\!:  L_{i\bar{i}}^{\bu}\!\times\!\C^{2m-1}\lra
L_{i\bar{i}}^{\bu}\!\times\!\C^{2m-1}$$
be the conjugation induced by this trivialization and 
the lift of~$\phi$ to the standard conjugation, respectively.
The composition 
$$\tilde{\phi}\!\circ\!\Phi_i\!:
L_{i\bar{i}}^{\bu}\lra  \tn{GL}(2m\!-\!1,\C)$$
is given~by
\begin{equation}\label{Conjtriv_e}
\big(\tilde{\phi}\!\circ\!\Phi_i(z_{i\bar{i}})\big)_{j_1j_2}=
\begin{cases}
(-1)^{|\phi|+1}z_{i\bar{i}}^{-2}\,,&\hbox{if}~j_1,j_2=\bar{i};\\
(-1)^{|\phi|(i+j_1)}z_{i\bar{i}}^{-1}\,,&\hbox{if}~j_1\!=\!\bar{j}_2\neq i,\bar{i};\\
0,&\hbox{otherwise}.
\end{cases}\end{equation}
Define
$$A_i\!: L_{i\bar{i}}^{\bu}\lra\tn{GL}(2m\!-\!1,\C)$$
by
\begin{equation}\label{Adfn_e}
\big(A_i(z_{i\bar{i}})\big)_{j_1j_2}=
\begin{cases}
\mf{i} a z_{i\bar{i}}^{-1}\,,&\hbox{if}~j_1,j_2=\bar{i};\\
(-\mf{i})^{j_1-j_2-1},&\hbox{if}~j_1\!\in\!\{j_2,\bar{j}_2\},2|(j_2\!-\!i);\\
\mf{i}^{j_1+j_2}z_{i\bar{i}}^{-1},&\hbox{if}~j_1\!\in\!\{j_2,\bar{j}_2\},2|(j_2\!-\!\bar{i});\\
0,&\hbox{otherwise}.
\end{cases}\end{equation}
By~(\ref{Conjtriv_e}), the composition of the above trivialization of~$T\P^{2m-1}$
over $L_{i\bar{i}}^{\bu}$ with $(\id,A_i)$
intertwines $\tn{d}\phi$
with~$\tilde\phi$ whenever $a\!\in\!\R^*$.

We order the standard coordinate vector fields along $L_{i\bar{i}}^{\bu}$
so that $\frac{\partial}{\partial z_{i\bar{i}}}$ is listed first, followed by the pairs
consisting of 
$\frac{\partial}{\partial z_{ij}}$ and $\frac{\partial}{\partial z_{i\bar{j}}}$
with $j\!\neq\!i$ and $2|(j\!-\!i)$.
The corresponding element of $\La_{\C}^{\top}(T\P^{2m-1})$ is then mapped to 
$$\det\big(A_i(z_{i\bar{i}})\big)=(-1)^{(m-1)\bar{i}}
\mf{i}^m 2^{m-1}az_{i\bar{i}}^{-m}$$
under the trivialization of $\La_{\C}^{\top}(T\P^{2m-1})$ over 
$L_{i\bar{i}}^{\bu}$ induced 
by the composite trivialization of~$T\P^{2m-1}$.
On the other hand, the image of this $(2m\!-\!1)$-tensor under the canonical 
isomorphism~(\ref{Pnses_e2}) followed by the trivialization~(\ref{dettriv_e})
is $(-1)^{m\bar{i}}\mf{i}^m z_{i\bar{i}}^{-m}$.
Thus, the two trivializations of $(K_{\P^{n-1}}^*,\Phi)$ over 
$L_{i\bar{i}}^{\bu}$ are homotopic in the sense
of Definition~\ref{admissible} if and only if $(-1)^{\bar{i}}a\!>\!0$.

In the case $\phi\!=\!\tau_{2m-1}$, the above composite trivialization restricts to 
a trivialization of $T\R\P^{2m-1}$ over~$\R\P_i^1$.
The first trivializing section is $-\mf{i} a^{-1}z_{i\bar{i}}\frac{\partial}{\partial z_{i\bar{i}}}$,
followed~by 
\begin{equation}\label{coordvect_e}
\frac{1}{2}\bigg(-\mf{i}\frac{\partial}{\partial z_{ij}}
+\mf{i} z_{i\bar{i}}\frac{\partial}{\partial z_{i\bar{j}}} \bigg)
\qquad\hbox{and}\qquad
\frac{(-1)^{\bar{i}}}{2}\bigg(\frac{\partial}{\partial z_{ij}}
+z_{i\bar{i}}\frac{\partial}{\partial z_{i\bar{j}}} \bigg)
\end{equation}
with $j\!\neq\!i$ and $2|(j\!-\!i)$.
Lifting these sections over the homomorphism~$h$ in the real part of the short exact 
sequence~(\ref{Pnses_e}) and combining with the image of~$f$, we obtain a trivialization
of $(2m\mc{O}_{\P^{2m-1}}(1))^{\R}$.
The composition of this trivialization with the trivialization~(\ref{Psidfn_e}) sends 
the two standard real basis elements in one $\C$-factor to~1 and $\mf{i}/2a$
and in $(m\!-\!1)$ $\C$-factors to $-\mf{i}/2$ and~$(-1)^{\bar{i}}/2$.
This transformation is orientation-preserving if and only if $(-1)^{(m-1)\bar{i}}a\!>\!0$.
If $m\!\in\!2\Z$,  the trivialization of $(2m\mc{O}_{\P^{2m-1}}(1))^{\R}$ over~$\R\P_i^1$ 
just discussed thus differs from a canonical one by a constant matrix-valued function which is
orientation-preserving if and only if $(-1)^{\bar{i}}a\!>\!0$.
Therefore, the trivialization of $T\R\P^{2m-1}$ over~$\R\P_i^1$ induced by the above composite
trivialization corresponds to the chosen spin structure on~$\R\P^{2m-1}$ if
$(-1)^{(m-1)\bar{i}}a\!>\!0$.

In summary, the orientation of $H^0(\Si_0,f_0^*T\P^{2m-1})_{\R}$ induced
by the above composite trivialization is the orientation induced~by
\begin{enumerate}[label=$\bullet$,leftmargin=*]

\item the chosen square root  if $c\!=\!\eta$ and  $(-1)^{\bar{i}}a\!>\!0$,

\item the chosen spin structure if $c\!=\!\tau$ and $(-1)^{(m-1)\bar{i}}a\!>\!0$.

\end{enumerate}

By (\ref{Conjtriv_e}), the components of a section $s\!\in\!H^0(\Si_0,f_0^*T\P^{2m-1})_{\R}$
with respect to the trivialization of $f_0^*T\P^n$ over $\P^1\!-\!\{0,\infty \}$
induced by the coordinate tangent vectors along $L_{i\bar{i}}^{\bu}$ 
satisfy
\begin{alignat}{2}
\label{newcoeff_e1}
s_{\bar{i}}(z)&=z^{d_0}\sum_{r=-d_0}^{d_0}s_{\bar{i};r}z^r,&\quad
s_{\bar{i};r}&=(-1)^{1+|c|r}\ov{s_{\bar{i};-r}};\\
\label{newcoeff_e2}
s_j(z)&=\sum_{r=0}^{d_0}s_{j;r}z^r,&\quad s_{j;r}&=(-1)^{|\phi|(i+j)+|c|r}\ov{s_{\bar{j};d_0-r}}     
\qquad\forall\,j\!\neq\!i,\bar{i}.
\end{alignat}
Therefore, the complex coefficients 
$$s_{\bar{i};-r}~~\hbox{with}~~r\!=\!1,\ldots,d_0, \quad\hbox{and}\quad
s_{j;r}~~\hbox{with}~~r\!=\!0,1,\ldots,d_0,~j\!\neq\!\bar{i},~2|(j\!-\!\bar{i}),$$ 
and the real coefficient $\mf{i} s_{\bar{i};0}$ give coordinates on $H^0(\Si_0,f_0^*T\P^{2m-1})_{\R}$.
With $A_i$ as in~(\ref{Adfn_e}), 
\begin{equation*} 
(A_is)_j(z)=\sum_{r=-d_0}^{d_0} \!\! b_{j;r}z^r,\quad 
\end{equation*}
where $b_{\bar{i};r}\!=\!\mf{i} as_{\bar{i};r}$ and 
\begin{gather*}
b_{j;0}=2
\begin{cases}
\tn{Im}(s_{\bar{j};d_0}),&\hbox{if}~j\!\neq\!i,\,2|(j\!-\!i);\\
(-1)^{\bar{i}}\tn{Re}(s_{j;d_0}),&\hbox{if}~j\!\neq\!\bar{i},\,2|(j\!-\!\bar{i});
\end{cases}\\
(b_{j;r},b_{j;-r})=
\begin{cases}
\mf{i}(s_{j;r},-s_{\bar{j};d_0-r}),&\hbox{if}~r\!\ge\!0,\,j\!\neq\!i,\,2|(j\!-\!i);\\
(-1)^{\bar{i}}(s_{\bar{j};r},s_{j;d_0-r}),&\hbox{if}~r\!\ge\!0,j\!\neq\!\bar{i},\,2|(j\!-\!\bar{i}).
\end{cases}
\end{gather*}
We note that $b_{j;r}\!=\!(-1)^{|c|r}\ov{b_{j;-r}}$ for $j\!\neq\!i$, as expected.

The weights of the $\T$-actions on the coordinate function $z_{i\bar{i}}$ 
and the coordinate vector fields~$\frac{\partial}{\partial z_{ij}}$
are $-2\la_i/d_0$ and $\la_i\!-\!\la_j$, respectively.
Thus, the weights of the $\T$-actions on the sections 
\begin{alignat*}{2}
&z^{d_0}\cdot s_{\bar{i};-r}z^{-r}\frac{\partial}{\partial z_{i\bar{i}}}
&\quad &\hbox{with}\quad r\!=\!1,\ldots,d_0, \qquad\hbox{and}\\
&s_{j;d_0-r}z^{d_0-r}\frac{\partial}{\partial z_{ij}}
&\quad &\hbox{with}\quad r\!=\!0,1,\ldots,d_0,~j\!\neq\!i,\bar{i},
\end{alignat*}
are given by
$$\bigg(1-d_0\frac{1}{d_0}+\frac{r}{d_0}\bigg)2\la_i \qquad\hbox{and}\qquad
\la_i\!-\!\la_j-
\frac{d_0\!-\!r}{d_0}2\la_i\,,$$
respectively.
Under the collapsing procedures of Lemma~\ref{trivialization-to-orientation} 
and \cite[Proposition 8.1.4]{FOOO}, 
the parts of $A_is$ involving negative and positive powers of~$z$ correspond 
to the holomorphic sections on $\Si_{\top}$ and~$\Si_{\bot}$, respectively.
Since we use the complex orientation of sections on~$\Si_{\top}$, these parts contribute
\begin{equation*}\begin{split}
&d_0!\bigg(\frac{2\la_i}{d_0}\bigg)^{\!\!d_0}\!\!\!\!\!\!\!\!
\prod_{\begin{subarray}{c}1\le j\le 2m\\ j\neq\bar{i},\, 2|(j-\bar{i})\end{subarray}}
\!\!\prod_{r=1}^{d_0}\!\bigg(\!\frac{2r\!-\!d_0}{d_0}\la_i-\la_j\!\!\bigg)\\
&\hspace{1.2in}=(-1)^{(m-1)d_0} d_0!\bigg(\frac{2\la_i}{d_0}\bigg)^{\!\!d_0}\!\!\!\!\!\!\!\!
\prod_{\begin{subarray}{c}1\le j\le 2m\\ j\neq i,\, 2|(j-i)\end{subarray}}
\!\!\prod_{r=1}^{d_0}\!\bigg(\!\frac{d_0\!-\!2r}{d_0}\la_i-\la_j\!\!\bigg)
\end{split}\end{equation*}
to $e(H^0(\Si_{e_0},f_0^*T\P^{4m-1})_{\R}^{\mov})$.

The parts of $A_is$ constant in~$z$ correspond to holomorphic sections on $\Si_0$ commuting
with the involution and constitute the direct sum of the trivial representation of~$\T$ 
on the space of sections $\{b_{\bar{i};0}z^0\!:\,b_{\bar{i};0}\!\in\!\R\}$ and 
of the two-dimensional representations of weight 
$$(-1)^i\big(\!-\!\la_i\!-\!\la_j\big)=(-1)^{\bar{i}}\big(\la_i\!-\!\la_{\bar{j}}\big)$$ 
with $j\!\neq\!\bar{i}$ and $2|(j\!-\!\bar{i})$.
Combining with the previous displayed expression, we find~that
\begin{equation*}\begin{split}
&e\big(H^0(\Si_{e_0},f_0^*T\P^{2m-1})_{\R}^{\mov}\big)\\
&\hspace{1in}=(-1)^{(m-1)(\bar{i}+d_0)} d_0!\bigg(\frac{2\la_i}{d_0}\bigg)^{\!\!d_0}\!\!\!\!\!\!\!\!
\prod_{\begin{subarray}{c}1\le j\le 2m\\ j\neq i,\, 2|(j-i)\end{subarray}}
\!\!\prod_{r=0}^{d_0}\!\bigg(\!\frac{d_0\!-\!2r}{d_0}\la_i-\la_j\!\!\bigg)
\end{split}\end{equation*}
if the number $a\!\in\!\R^*$ in~(\ref{Adfn_e}) has the correct sign 
and the space of sections $\{b_{\bar{i};0}z^0\!\}$
is oriented by the positive direction of $b_{\bar{i};0}\!\in\!\R$.

The characteristic vector field~$s$ for the action of $S^1\!\subset\!G_c$
which fixes the point $z\!=\!0$ in $\C\!\subset\!\P^1$ is given~by
\begin{equation}\label{charVF_e}
(A_is)_{\bar{i}}= \mf{i} a\, z^{-d_0}\frac{d}{d\theta}\big(e^{-\mf{i}\theta}z\big)^{d_0}\Big|_{\theta=0}
=ad_0, \quad (A_is)_j=0~~\forall\,j\neq i,\bar{i}.
\end{equation}
Combining this with the bullet points above, we  conclude that
\begin{equation}\label{TeulerClass_e}\begin{split}
&e\big(H^0(\Si_{e_0},f_0^*T\P^{2m-1})_{\R}^{\mov}\big)\\
&\hspace{.5in}=(-1)^{(m-1)d_0+m|c|\bar{i}} d_0!\bigg(\frac{2\la_i}{d_0}\bigg)^{\!\!d_0}\!\!\!\!\!\!\!\!
\prod_{\begin{subarray}{c}1\le j\le 2m\\ j\neq i,\, 2|(j-i)\end{subarray}}
\!\!\prod_{r=0}^{d_0}\!\bigg(\!\frac{d_0\!-\!2r}{d_0}\la_i-\la_j\!\!\bigg).
\end{split}\end{equation}
Taking $m\!\in\!2\Z$, we obtain~(\ref{eContr2_e}).
\end{proof}

\begin{remark}\label{comporient_rmk2a}
For $m\!\in\!\Z^+$ and $i\!=\!1,\ldots,2m$, let $L_{i\bar{i}}^+\!\subset\!L_{i\bar{i}}$
denote the disk cut out by $\R\P^1_i$ that contains~$P_i$.
The projection
$$\big(2\mc{O}_{\P^{2m-1}}(1)\big)^{\R} \lra \mc{O}_{\P^{2m-1}}(1)\big|_{\R\P^{2m-1}}$$
to the first component is an isomorphism.
Thus, the trivialization~(\ref{cO2triv_e}) induces a trivialization~$\Psi_i'$ of 
$\mc{O}_{\P^{2m-1}}(1)$ over~$\R\P^1_i$.
It extends over~$L_{i\bar{i}}^+$ by the same formula.
The trivialization obtained from~(\ref{cO2triv_e}) by evaluating~$\alpha_2$,
instead of~$\alpha_1$, differs from this trivialization~by 
the orientation and spin:
$$\alpha_2\big(\z_i(\ell)\big)=\ov{\alpha_1\big(\fc_{\phi}^+(\z_i(\ell))\big)}
=\ov{\alpha_1\big(\ov{z_{i\bar{i}}(\ell)}\z_i(\ell)\big)}
=z_{i\bar{i}}(\ell)\ov{\alpha_1\big(\z_i(\ell)\big)}.$$
For $m\!\not\in\!2\Z$, the reasoning of Section~\ref{orient_subs} determines a canonical spin
structure~on  
$$T\R\P^{2m-1}\oplus \mc{O}_{\P^{2m-1}}(1)\big|_{\R\P^{2m-1}} \approx
T\R\P^{2m-1}\oplus  \big(2\mc{O}_{\P^{2m-1}}(1)\big)^{\R} $$
and thus a \textsf{relative spin structure} on $\R\P^{2m-1}$.
If $i$ is odd, the restriction of this trivialization to~$\R\P^1_i$ is equivalent to the direct sum 
of the trivialization~$\Psi$ used in the proof of Proposition~\ref{CenContr_prp}
and the trivialization~$\Psi_i'$.
If $i$ is even, this restriction differs from the direct sum by the orientation and spin.
\end{remark}

\begin{remark}\label{comporient_rmk2b}
For $m,d\!\in\!\Z^+$, 
let $\mc{M}_0^{\disk}(\P^{2m-1},d)$ denote the moduli space of
holomorphic disk maps to~$\P^{2m-1}$ with boundary on $\R\P^{2m-1}$
that double to degree $d$ holomorphic maps.
Thus, 
\begin{equation}\label{comporient_e2b}
\mc{M}_0(\P^{2m-1},d)^{\tau_{2m-1},\tau} = 
\mc{M}_0^{\disk}(\P^{2m-1},d)\big/\tau_{\mc{M}}\,.
\end{equation}
For $m\!\not\in\!2\Z$, the relative spin structure of Remark~\ref{comporient_rmk2a} 
determines an orientation on the disk space in~(\ref{comporient_e2b});
see \cite[Theorem~8.1.1]{FOOO}.
By \cite[Corollary~5.9]{XCapsSigns}, this orientation descends to 
the left-hand side in~(\ref{comporient_e2b}).
By Remark~\ref{comporient_rmk2a}, the last orientation is the orientation determined 
in the proof of Proposition~\ref{CenContr_prp} if $i$ is odd.
If $i$ is even and $d_0$ is odd, the trivialization of $f_0^*T\R\P^{2m-1}$ over~$S^1$
in this proof
differs from the trivialization induced by the relative spin structure of Remark~\ref{comporient_rmk2a}
by the orientation and the spin.
Each of these changes by itself would reverse the induced orientation on~(\ref{comporient_e2b});
the two of them together preserve~it.
If $i$ and $d_0$ are even, the trivialization of $f_0^*T\R\P^{2m-1}$ over~$S^1$
in the proof 
differs from the  relative spin trivialization only by the orientation;
this change reverses the induced orientation on~(\ref{comporient_e2b}).
In summary, (\ref{TeulerClass_e}) describes the orientation on
the moduli space in~(\ref{comporient_e2b}) induced by the relative spin structure
of Remark~\ref{comporient_rmk2a}
 unless $d_0$ and~$i$ are even.
\end{remark}

\begin{remark}\label{comporient_rmk2c}
Let $\fa\!:S^1\!\lra\!S^1$ denote the antipodal involution.
A \textsf{spin substructure} on $(K_{\P^{2m-1}},d\phi)$ in the sense of 
the paragraph above \cite[Corollary~5.10]{XCapsSigns} consists of a trivialization
of this real bundle pair over every real loop 
$$\alpha\!:(S^1,\fa)\lra(\P^{2m-1},\phi)$$ 
so that these trivializations extend over homotopies of such loops.
Any such substructure orients $\mc{M}_0(\P^{2m-1},d)^{\phi,\eta}$.
It can be specified by the trivialization over~$\R\P^1_i$ viewed as the boundary 
of~$L_{i\bar{i}}^+$ to be given by~(\ref{dettriv_e}).
If $m\!\not\in\!2\Z$ and $\phi\!=\!\eta_{2m-1}$, the proof of \cite[Lemma~3.4]{XCapsSigns}
implies that the induced trivialization over $\R\P^1_{\bar{i}}\!=\!\R\P^1_{i}$ 
viewed as the boundary of~$L_{\bar{i}i}^+$ is then the {\it opposite} of~(\ref{dettriv_e}).
Since (\ref{dettriv_e}) is invariant under the interchange of~$i$ and~$\bar{i}$,
this interchange thus changes the spin substructure used to orient $\mc{M}_0(\P^{2m-1},d)^{\phi,\eta}$
in Lemma~\ref{trivialization-to-orientation} and the proof of Proposition~\ref{CenContr_prp}.
Thus, the interchange of~$i$ and~$\bar{i}$ changes the orientation of the moduli space
if $m\!\not\in\!2\Z$, $\phi\!=\!\eta_{2m-1}$, and $d\!\not\in\!2\Z$
(if $d\!\in\!2\Z$, this moduli space is empty).
If  $\phi\!=\!\tau_{2m-1}$, then $(K_{\P^{2m-1}},d\phi)$ admits 
a real square.
By \cite[Corollary~2.4(2)]{XCapsSetup}, reversing the orientation of a real loop~$\alpha$
does not change the trivialization in a spin substructure.
Thus, the interchange of~$i$ and~$\bar{i}$ preserves the orientation of the moduli space
if $\phi\!=\!\tau_{2m-1}$ and $d\!\in\!2\Z$
(if $d\!\not\in\!2\Z$, this moduli space is empty).
\end{remark}

\noindent
We note that the right-hand side of~(\ref{TeulerClass_e}) has the expected behavior 
if~$i$ is replaced by~$\bar{i}$.
This interchange changes the sign of the right-hand side of~(\ref{TeulerClass_e}) 
if and only~if
$$\de_m^c(d_0)\equiv m\big(d_0\!+\!1\!+\!|c|\big)$$
is even.
If the target of the fibration
\begin{equation}\label{TeulerClass_e2}
\ov{\mc{M}}_1(\P^{2m-1},d)^{\phi,c}\lra\ov{\mc{M}}_0(\P^{2m-1},d)^{\phi,c}
\end{equation}
is oriented at~$[f_0]$ using~(\ref{dettriv_e}) if $c\!=\!\eta$ and~(\ref{Psidfn_e}) if $c\!=\!\tau$,
then~(\ref{TeulerClass_e}) describes the corresponding orientation 
of  the domain of~(\ref{TeulerClass_e2}) at the map~$f_0$ with 
the positive marked point sent to~$P_i$.
Interchanging~$i$ and~$\bar{i}$ reverses the orientation of the fiber of~(\ref{TeulerClass_e2}) 
over~$[f_0]$ and thus of the domain of~(\ref{TeulerClass_e2}).
Thus, $\de_m^c(d_0)$ should be even if and only if
the orientation on the target in~(\ref{TeulerClass_e2}) does not change under this interchange.
If $m\!\in\!2\Z$, (\ref{dettriv_e}) and~(\ref{Psidfn_e}) are the restrictions
of a real square root over~$\P^{2m-1}$ and of a spin structure on~$\R\P^{2m-1}$,
respectively, which orient the target in~(\ref{TeulerClass_e2});
in this case $\de_m^c(d_0)$ is even.

Suppose $m\!\not\in\!2\Z$. 
If $c\!=\!\tau$, (\ref{TeulerClass_e}) corresponds to the orientation of
the target in~(\ref{TeulerClass_e2}) induced by the relative spin structure
of Remark~\ref{comporient_rmk2a} unless~$d_0$ and~$i$ are even;
see Remark~\ref{comporient_rmk2b}.
Thus,  replacing~$i$ by~$\bar{i}$ preserves its orientation 
if $d_0\!\not\in\!2\Z$ (when $\de_m^c(d_0)$ is even)
and reverses it if $d_0\!\in\!2\Z$ (when $\de_m^c(d_0)$ is odd).
If $c\!=\!\eta$ and $d\!\in\!2\Z$ (and thus $\phi\!=\!\tau_{2m-1}$),
(\ref{TeulerClass_e}) corresponds to the orientation of
the target in~(\ref{TeulerClass_e2}) induced by a spin substructure on $(K_{\P^{2m-1}},d\phi)$;
see Remark~\ref{comporient_rmk2c}.
In this case, $\de_m^c(d_0)$ is indeed even.
If $c\!=\!\eta$ and $d\!\not\in\!2\Z$ (and thus $\phi\!=\!\eta_{2m-1}$), 
 replacing~$i$ by~$\bar{i}$  changes the spin substructure used to orient 
the target in~(\ref{TeulerClass_e2}) and reverses its orientation.
In this case, $\de_m^c(d_0)$ is indeed odd.

Let $c\!=\!\tau,\eta$ and $\phi\!=\!\tau_{2m-1},\eta_{2m-1}$ be such that 
$2|(|\phi|\!-\!|c|d_0)$. 
Suppose
$$f_0\!:\P^1\!=\!\Si_{e_0}\lra \P^{2m-1}$$
is as above.
Along with the characteristic vector field of the $S^1$-action on the parameter space,
(\ref{TeulerClass_e}) determines an orientation on 
\begin{equation}\label{cMcP_e}
 H^0(\P^1,f_0^*T\P^{2m-1})_{\R} =T_{f_0}\mc{P}_0(\P^{2m-1},d)^{\phi,c}.
\end{equation}
Another orientation on this space is described in Section~\ref{AGorient_subs}.

\begin{corollary}\label{comporient_crl}
The two orientations on the vector space in~(\ref{cMcP_e}) are the same if 
and only if $m((d_0\!+\!1)i\!+\!1\!+\!|c|\bar{i})\!\not\in\!2\Z$.
\end{corollary}

\begin{proof}
The algebraic orientation on~(\ref{cMcP_e}) is induced by the diffeomorphism~$\Th_c$ 
in Section~\ref{AGorient_subs}.
The orientation induced by the diffeomorphism~$\Th_c'$ differs from this orientation
by $(-1)^{m(d_0+1)}$.
Let~$\Th_i$ denote the first diffeomorphism if $i\!\not\in\!2\Z$ and the second one if $i\!\in\!2\Z$.
The comparison below is thus made with the algebraic orientation multiplied by
$(-1)^{m(d_0+1)\bar{i}}$.

Let $1_i\!\in\!\C^m$ denote the unit coordinate vector for the component $\flr{(i\!+\!1)/2}$.
It is sufficient to establish the claim near the image~$f_0$ of 
$$\big([0,\ldots,0],\ldots,[0,\ldots,0],[1_i]\big)
\in\big(\tn{Sym}^{d_0}\C\big)^m\!\times \R\P^{2m-1}$$
under~$\Th_i$.
The deformations of the coefficients of $z^r$ in 
$$A_j\prod_{r=1}^{d_0}(1\!+\!(-1)^{|c|}b_{j;r}z)\bigg/
A_i\prod_{r=1}^{d_0}\big(1\!+\!(-1)^{|c|}b_{i;r}z\big)$$
for $j\!\neq\!i$ and $2|(j\!-\!i)$ correspond to the coefficients $s_{j;r}$ in~(\ref{newcoeff_e2}), 
up to a complex multiple.
The deformations of the coefficients of~$z^{d_0+r}$ in 
$$ \bar{A}_i\prod_{r=1}^{d_0}(\bar{b}_{i;r}\!+\!z) 
\bigg/A_i\prod_{r=1}^{d_0}(1\!+\!(-1)^{|c|}b_{i;r}z)$$
with $r\!>\!0$  correspond to the coefficients $s_{\bar{i};d_0+r}$ in~(\ref{newcoeff_e1}).
Thus, the complex orientations on $(\tn{Sym}^{d_0}\C)^m$ and on $A_j\!\in\!\C$ 
with $j\!\neq\!i$ and $2|(j\!-\!i)$ give 
\begin{equation}\label{TeulerClass_e4}\begin{split}
&e\big(H^0(\Si_{e_0},f_0^*T\P^{2m-1})_{\R}^{\mov}\big)\\
&\hspace{.5in}
=(-1)^{d_0}d_0!\bigg(\frac{2\la_i}{d_0}\bigg)^{\!\!d_0}\!\!\!\!\!\!\!\!
\prod_{\begin{subarray}{c}1\le j\le 2m\\ j\neq i,\, 2|(j-i)\end{subarray}}
\!\!\prod_{r=0}^{d_0}\!\bigg(\!\frac{d_0\!-\!2r}{d_0}\la_i-\la_j\!\!\bigg).
\end{split}\end{equation}

The characteristic vector field corresponding to the $S^1$ part of the variation of~$A_i$ 
is $-2\mf{i}z^{d_0}\frac{\partial}{\partial z_{i\bar{i}}}$,
i.e.~a positive multiple of the characteristic  vector field of the $S^1$-action on the parameter space.
Our orientation of $\R\P^{2m-1}$ is determined by the complex orientations on 
$A_j\!\in\!\C$, with $j\!\neq\!i$ and $2|(j\!-\!i)$,
and the {\it negative} characteristic vector field of the $S^1$-action on~$A_i$.
Comparing~(\ref{TeulerClass_e}) and~(\ref{TeulerClass_e4}), we then obtain the claim.
\end{proof}

\noindent
The orientations on the moduli spaces~(\ref{cmeq_e})
induced by the canonical spin structure on $\R\P^{4m-1}$
and the canonical real square root of $(K_{\P^{4m-1}},\eta_{4m-1})$
described in Section~\ref{orient_subs} are thus the opposite of 
the algebraic orientations of Section~\ref{AGorient_subs}.
This is reflected in the opposite signs for the line counts of Example~\ref{d1_eg}
and Corollary~\ref{alNums_crl} with $m$ replaced by~$2m$.
By Corollary~\ref{comporient_crl} and Remark~\ref{comporient_rmk2b}, 
the orientation on $\ov{\mc{M}}_l(\P^{4m+1},d)^{\tau_{4m+1},\tau}$
induced by the canonical relative spin structure of Remark~\ref{comporient_rmk2a}
is the same as the algebraic orientation if $d\!\not\in\!2\Z$ and 
the opposite if $d\!\in\!2\Z$.
The orientation on $\ov{\mc{M}}_l(\P^{4m+1},d)^{\tau_{4m+1},\eta}$
induced by the  spin substructure on $(\P^{4m+1},{\tau_{4m+1}})$ of Remark~\ref{comporient_rmk2c}
is the opposite of  the algebraic orientation.
The orientation on $\ov{\mc{M}}_l(\P^{4m+1},d)^{\eta_{4m+1},\eta}$
induced by the  spin substructure on $(\P^{4m+1},{\eta_{4m+1}})$ 
determined by the trivialization~(\ref{dettriv_e}) over $\R\P^1_1$ oriented 
as the boundary of~$L_{12}^+$
is the same of  the algebraic orientation.

\begin{remark}\label{comporient_rmk}
Corollary~\ref{comporient_crl} implies that Proposition~\ref{CenContr_prp}
with the leading sign exponent changed to $(-1)^{(d_0+1)(1+m\bar{i})}$ applies
 to the moduli spaces
\begin{equation}\label{comporient_e}
\ov{\mc{M}}_l(\P^{2m-1},d)^{\phi,\tau} \qquad\hbox{and}\qquad 
\ov{\mc{M}}_l(\P^{2m-1},d)^{\phi,\eta}
\end{equation}
with the algebraic orientations of Section~\ref{AGorient_subs} for $d\!\not\in\!2\Z$,
whether $m$ is odd or even.
The resulting half-edge contribution then changes sign when~$i$ is replaced by~$\bar{i}$,
as expected.
With $4m$ replaced by~$2m$ in Example~\ref{d1_eg}, 
this change introduces the sign of $(-1)^{m-1}$ in~(\ref{d1eg_e2}) and recovers
Corollary~\ref{alNums_crl}.
The remaining considerations of Sections~\ref{fixedloci_subs} and~\ref{NormBndl_subs} 
apply to the algebraic orientations of these moduli spaces,
with one important difference if $m\!\not\in\!2\Z$.
By \cite[Lemma~3.1]{GZ2}, the correct orientation for the smoothing of the disk node
differs from the complex one by $(-1)^{md^+}$, where $d_+\!=\!(d\!-\!d_0)/2$ 
is the degree of the graph~$\Gamma'$ attached to the disk.
This is also consistent with the sentence preceding this remark and 
\cite[Remark~2.6]{RealEnum}.
This introduces two new signs into the computation Example~\ref{d1_eg},
resulting in the same answer.
The statement and proof of Theorem~\ref{equalGW} apply to the algebraic orientations
of the moduli spaces~(\ref{comporient_e}), after reversing the orientation on the second space
(as needed for $d\!\in\!2\Z$ by Proposition~\ref{algbnd_prp}).
\end{remark}



\end{document}